\newtheorem{theorem}{Theorem}[section]
\newtheorem{lemma}[theorem]{Lemma}
\newtheorem{corollary}[theorem]{Corollary}
\newtheorem{case}{Case}
\newtheorem{proof}{\textmd{\textit{Proof.}}}
\newtheorem{remark}[theorem]{Remark}
\newtheorem{example}[theorem]{Example}
\newtheorem{definition}[theorem]{Definition}
\newtheorem{assumption}[theorem]{Assumption}
\newcommand{\qedd}{\hfill \Box}
\newcommand{\ve}{\varepsilon}
\newcommand{\del}{\partial}
\newcommand{\lra}{\longrightarrow}
\newcommand{\N}{\ensuremath{\mathbb{N}}}
\newcommand{\R}{\ensuremath{\mathbb{R}}}
\newcommand{\Sph}{\ensuremath{\mathbb{S}}}
\def\diam{\mathop{\mathrm{diam}}\nolimits}
\def\CAT{\mathop{\mathrm{CAT}}\nolimits}
\title{Gradient flows and a Trotter--Kato formula\\ of semi-convex functions on CAT(1)-spaces}
\author{Shin-ichi Ohta\thanks{Department of Mathematics, Kyoto University,
Kyoto 606-8502, Japan ({\sf sohta@math.kyoto-u.ac.jp});
Supported in part by the Grant-in-Aid for Young Scientists (B) 23740048.}
\ \&\ Mikl\'os P\'alfia\thanks{Department of Mathematics, Kyoto University,
Kyoto 606-8502, Japan ({\sf palfia.miklos@aut.bme.hu});
Supported in part by the SGU project of Kyoto University, the JSPS international research fellowship,
JSPS KAKENHI Grant No.~14F04320,
the National Research Foundation of Korea (NRF) Grant No.~2015R1A3A2031159 funded by the Korea government (MEST),
and the Hungarian National Research Fund OTKA K-115383.}}
\date{}
\begin{document}

\maketitle

\begin{abstract}
We generalize the theory of gradient flows of semi-convex functions on $\CAT(0)$-spaces,
developed by Mayer and Ambrosio--Gigli--Savar\'e, to $\CAT(1)$-spaces.
The key tool is the so-called ``commutativity'' representing a Riemannian nature of the space,
and all results hold true also for metric spaces satisfying the commutativity
with semi-convex squared distance functions.
Our approach combining the semi-convexity of the squared distance function
with a Riemannian property of the space seems to be of independent interest,
and can be compared with Savar\'e's work on the local angle condition under lower curvature bounds.
Applications include the convergence of the discrete variational scheme to a unique gradient curve,
the contraction property and the evolution variational inequality of the gradient flow,
and a Trotter--Kato product formula for pairs of semi-convex functions.
\end{abstract}

\section{Introduction}

The theory of gradient flows in singular spaces is a field of active research having applications in various fields.
For instance, regarding heat flow as gradient flow of the relative entropy in the ($L^2$-)Wasserstein space,
initiated by Jordan, Kinderlehrer and Otto~\cite{JKO},
is known as a useful technique in partial differential equations
(see \cite{Ot}, \cite{Vi1}, \cite{AGSbook}, \cite{ASZ} among many others),
and has played a crucial role in the recent remarkable development of geometric analysis on
metric measure spaces satisfying the (\emph{Riemannian}) \emph{curvature-dimension condition}
(see \cite{Vi}, \cite{Gi}, \cite{GKO}, \cite{AGShf}, \cite{AGSrcd}, \cite{EKS}).
One of the recent striking achievements is Gigli's splitting theorem \cite{Gsplit}
(see also a survey \cite{Gsplit'}), in which the gradient flow of the Busemann function is used in an impressive way.

We follow the strategy of constructing a gradient flow of a lower semi-continuous, semi-convex function
$\phi$ on a metric space $(X,d)$ via the discrete variational scheme employing
the \emph{Moreau--Yosida approximation}:
\begin{equation}\label{eq:MY}
\phi_{\tau}(x):=\inf_{z \in X} \left\{ \phi(z) +\frac{d^2(x,z)}{2\tau} \right\},
 \quad x \in X,\ \tau>0.
\end{equation}
A point $x_{\tau}$ attaining the above infimum is considered as an approximation
of the point $\xi(\tau)$ on the gradient curve $\xi$ of $\phi$ with $\xi(0)=x$.
Here $\phi$ is said to be \emph{semi-convex} if it is \emph{$\lambda$-convex} for some $\lambda \in \R$
meaning that
\[ \phi\big( \gamma(t) \big) \le (1-t)\phi\big( \gamma(0) \big) +t\phi\big( \gamma(1) \big)
 -\frac{\lambda}{2}(1-t)td^2\big(\gamma(0),\gamma(1) \big) \]
along geodesics $\gamma:[0,1] \lra X$.
Since the approximation scheme is based on the distance function,
finer properties of the distance function provide finer analysis of gradient flows.
In \cite{Jo}, \cite{Ma} (with applications to harmonic maps)
and \cite{AGSbook} (with applications to the Wasserstein spaces),
gradient flows in \emph{$\CAT(0)$-spaces} (non-positively curved metric spaces) are well investigated;
see also Ba\v{c}\'ak's recent book \cite{Ba-book}.
There the $2$-convexity of the squared distance function,
that is indeed the definition of $\CAT(0)$-spaces, played essential roles.
We shall generalize their theory to \emph{$\CAT(1)$-spaces} (metric spaces of sectional curvature $\le 1$),
where the distance functions are only semi-convex.
$\CAT(1)$-spaces can have a more complicated global structure than $\CAT(0)$-spaces.
For instance, all $\CAT(0)$-spaces are contractible while $\CAT(1)$-spaces may not.

It is known that the direct application of the techniques of $\CAT(0)$-spaces to $\CAT(1)$-spaces
does not work.
The point is that the $K$-convexity of the squared distance function with $K<2$ 
holds true on some non-Hilbert Banach spaces,
on those the behavior of gradient flows is much less understood.
We overcome this difficulty by introducing the notion of ``commutativity''
representing a ``Riemannian nature'' of the space.
Precisely, the key ingredients of our analysis are the following properties of $\CAT(1)$-spaces:
\begin{enumerate}[(A)]
\item
The \emph{commutativity}:
\begin{equation}\label{eq:commu}
\lim_{s \downarrow 0} \frac{d^2(\gamma(s),z)-d^2(x,z)}{s}
 = \lim_{t \downarrow 0} \frac{d^2(\eta(t),y)-d^2(x,y)}{t}
\end{equation}
for geodesics $\gamma$ and $\eta$ with $x=\gamma(0)=\eta(0)$, $\gamma(1)=y$ and $\eta(1)=z$
(see \eqref{eq:comm} in the proof of Lemma~\ref{lm:key});

\item
The semi-convexity of the squared distance function (see Lemma~\ref{lm:Kconv}).
\end{enumerate}
(One can more generally consider some other family of curves along those (A) and (B) hold,
as was essentially used to study the Wasserstein spaces in \cite{AGSbook}.)
This approach, reinforcing the semi-convexity with the Riemannian nature of the space,
seems to be of independent interest and is in a similar spirit to Savar\'e's work~\cite{Sa}
based on the semi-concavity of distance functions and the \emph{local angle condition},
those properties fit the study of spaces with lower curvature bounds.
The semi-convexity and semi-concavity are usually studied in the context of
Banach space theory and Finsler geometry, see \cite{BCL}, \cite{Oconv}, \cite{Ouni}.
In the $\CAT(0)$-setting, the commutativity follows from the $2$-convexity of the squared distance function
and the role of the commutativity is implicit.
The commutativity is not true in non-Riemannian Finsler manifolds (see Remark~\ref{rm:key}(b)).
Actually, the lack of the commutativity is the reason why the first author and Sturm introduced
the notion of \emph{skew-convexity} in \cite{OSnc} to study the contraction property
of gradient flows in Finsler manifolds.
In contrast, on the Wasserstein space over a Riemannian manifold,
we have a sort of Riemannian structure but the convexity of the squared distance function fails.
See \cite[\S B]{Gsplit} for a connection between \eqref{eq:commu} and
the \emph{infinitesimal Hilbertianity} which is an ``almost everywhere'' notion of Riemannian nature.

Modifying the calculation in \cite{AGSbook} with the help of the commutativity,
we arrive at the key estimate for a $\lambda$-convex function $\phi$
and a $K$-convex distance (see Lemma~\ref{lm:key}):
\[ d^2(x_{\tau},y)
 \le d^2(x,y) -\lambda \tau d^2(x_{\tau},y) +2\tau\{ \phi(y)-\phi(x_{\tau}) \}
 -\frac{K}{2}d^2(x,x_{\tau}), \]
where $x_{\tau}$ is a point attaining the infimum in \eqref{eq:MY}.
Surprisingly, even with $K<0$ (as well as $\lambda<0$), this estimate is enough to generalize the argument of \cite{AGSbook}.
We show the convergence of the discrete variational scheme to a unique gradient curve (Theorem~\ref{th:T422}),
the contraction property (Theorem~\ref{th:P431}) and the evolution variational inequality (Theorem~\ref{th:T432})
of the gradient flow.
Moreover, along the lines of \cite{Ma} and \cite{CM},
we study the large time behavior of the flow (\S \ref{ssc:Mayer})
and prove a Trotter--Kato product formula for pairs of semi-convex functions (Theorem~\ref{th:TK}).
The latter is a two-fold generalization of the existing results in \cite{CM}, \cite{St} and \cite{Ba}
for convex functions on $\CAT(0)$-spaces
(to be precise, an inequality corresponding to our key estimate with $\lambda=0$ and $K=2$
is an assumption of \cite{CM}).
We stress that we use only the qualitative properties of $\CAT(1)$-spaces instead of the direct curvature condition.
Thus our technique also applies to every metric space satisfying the conditions (A) and (B) above,
under a mild coercivity assumption on $\phi$
(see Case~\ref{case:Kcon} at the beginning of Section~\ref{sc:gf}).
This case could be more important than the $\CAT(1)$-setting,
because in $\CAT(1)$-spaces the squared distance function is locally $K$-convex with $K>0$,
that makes some discussions easier with the help of the globalization technique (see \S \ref{ssc:pi}).

We finally mention some more related works.
Gradient flow in metric spaces with lower sectional curvature bounds (\emph{Alexandrov spaces})
is investigated in \cite{PP}, \cite{Ly}.
This technique was generalized to the Wasserstein spaces over Alexandrov spaces in \cite{Ogra}
and \cite{Sa}.
Sturm~\cite{Stu} recently studied gradient flows in metric measure spaces satisfying
the Riemannian curvature-dimension condition.
Discrete-time gradient flow is also an important subject related to optimization theory,
for that we refer to \cite{OP} and the references therein.

The organization of the article is as follows.
In Section~\ref{sc:prel}, we recall preliminary results on gradient flows in metric spaces from \cite{AGSbook},
followed by the necessary facts of $\CAT(1)$-spaces.
Section~\ref{sc:key} is devoted to our key estimate.
We apply the key estimate to the study of gradient flows in Section~\ref{sc:gf},
and prove a Trotter--Kato product formula in Section~\ref{sc:TK}.

\section{Preliminaries}\label{sc:prel}

Let $(X,d)$ be a complete metric space.
A curve $\gamma:[0,1] \lra X$ is called a \emph{geodesic} if it is locally minimizing and of constant speed.
We call $\gamma$ a \emph{minimal} geodesic if it is globally minimizing,
namely $d(\gamma(s),\gamma(t))=|s-t|d(\gamma(0),\gamma(1))$ for all $s,t \in [0,1]$.
We say that $(X,d)$ is \emph{geodesic}
if any two points $x,y \in X$ admit a minimal geodesic between them.

\subsection{Gradient flows in metric spaces}\label{ssc:gf}

We recall basic facts on the construction of gradient curves in metric spaces.
We follow the technique called the ``minimizing movements'' going back to (at least) De Giorgi~\cite{DG},
see \cite{AGSbook} for more on this theory.
We also refer to \cite{Br}, \cite{CL} for classical theories on linear spaces.

\subsubsection{Discrete solutions}\label{sssc:disc}

As our potential function, we always consider a lower semi-continuous function
$\phi:X \lra (-\infty,\infty]$ such that
\[ D(\phi):=X \setminus \phi^{-1}(\infty) \neq \emptyset. \]
Given $x \in X$ and $\tau>0$, we define the \emph{Moreau--Yosida approximation}:
\[ \phi_{\tau}(x):=\inf_{z \in X} \left\{ \phi(z) +\frac{d^2(x,z)}{2\tau} \right\} \]
and set
\[ J_{\tau}^{\phi}(x) :=\left\{ z \in X \,\bigg|\, \phi(z) +\frac{d^2(x,z)}{2\tau} =\phi_{\tau}(x) \right\}. \]
For $x \in D(\phi)$ and $z \in J_{\tau}^{\phi}(x)$ (if $J_{\tau}^{\phi}(x) \neq \emptyset$),
it is straightforward from
\[ \phi(z)+\frac{d^2(x,z)}{2\tau} \le \phi(x) \]
that $\phi(z) \le \phi(x)$ and $d^2(x,z) \le 2\tau\{ \phi(x)-\phi(z) \}$.
We consider two kinds of conditions on $\phi$.

\begin{assumption}\label{as:phi}
\begin{enumerate}[(1)]
\item\label{it:coer}
There exists $\tau_*(\phi) \in (0,\infty]$ such that
$\phi_{\tau}(x)>-\infty$ and $J_{\tau}^{\phi}(x) \neq \emptyset$
for all $x \in X$ and $\tau \in (0,\tau_*(\phi))$ (\emph{coercivity}).

\item\label{it:cpt}
For any $Q \in \R$,
bounded subsets of the sub-level set $\{ x \in X \,|\, \phi(x) \le Q \}$ are relatively compact in $X$
(\emph{compactness}).
\end{enumerate}
\end{assumption}

We remark that, if $\phi_{\tau_*}(x_*)>-\infty$ for some $x_* \in X$ and $\tau_*>0$,
then $\phi_{\tau}(x)>-\infty$ for every $x \in X$ and $\tau \in (0,\tau_*)$
(see \cite[Lemma~2.2.1]{AGSbook}).
Then, if the compactness \eqref{it:cpt} holds, we have $J_{\tau}^{\phi}(x) \neq \emptyset$
by the lower semi-continuity of $\phi$ (see \cite[Corollary~2.2.2]{AGSbook}).

\begin{remark}\label{rm:bdd}
If $\diam X<\infty$ and the compactness \eqref{it:cpt} holds, then the lower semi-continuity of $\phi$
implies that every sub-level set $\{ x \in X \,|\, \phi(x) \le Q \}$ is (empty or) compact.
Thus $\phi$ is bounded below and we can take $\tau_*(\phi)=\infty$.
\end{remark}

To construct discrete approximations of gradient curves of $\phi$,
we consider a \emph{partition} of the interval $[0,\infty)$:
\[ \mathscr{P}_{\bm{\tau}}=\{0=t^0_{\bm{\tau}} <t^1_{\bm{\tau}} <\cdots \},
 \qquad \lim_{k \to \infty} t^k_{\bm{\tau}} =\infty, \]
and set
\[ \tau_k:=t^k_{\bm{\tau}}-t^{k-1}_{\bm{\tau}} \quad \text{for }k \in \N,\qquad
 |\bm{\tau}|:=\sup_{k \in \N} \tau_k. \]
We will always assume $|\bm{\tau}|<\tau_*(\phi)$.
Given an initial point $x_0 \in D(\phi)$,
\begin{equation}\label{eq:dgf}
\text{$x_{\bm{\tau}}^0:=x_0$
and recursively choose arbitrary $x_{\bm{\tau}}^k \in J_{\tau_k}^{\phi}(x_{\bm{\tau}}^{k-1})$
for each $k \in \N$.}
\end{equation}
We call $\{x_{\bm{\tau}}^k\}_{k \ge 0}$ a \emph{discrete solution}
of the variational scheme \eqref{eq:dgf} associated with the partition $\mathscr{P}_{\bm{\tau}}$,
which is thought of as a \emph{discrete-time gradient curve} for the potential function $\phi$.
The following a priori estimates (see \cite[Lemma~3.2.2]{AGSbook})
will be useful in the sequel.
We remark that these estimates are easily obtained if $\phi$ is bounded below.

\begin{lemma}[A priori estimates]\label{lm:L322}
Let $\phi:X \lra (-\infty,\infty]$ satisfy Assumption~$\ref{as:phi}$\eqref{it:coer}.
Then, for any $x_* \in X$ and $Q,T>0$, there exists a constant
$C=C(x_*,\tau_*(\phi),Q,T)>0$ such that,
if a partition $\mathscr{P}_{\bm{\tau}}$ and an associated discrete solution
$\{x_{\bm{\tau}}^k\}_{k \ge 0}$ of \eqref{eq:dgf} satisfy
\[ \phi(x_0) \le Q,\quad d^2(x_0,x_*) \le Q,\quad t_{\bm{\tau}}^N \le T,\quad
 |\bm{\tau}| \le \frac{\tau_*(\phi)}{8}, \]
then we have for any $1 \le k \le N$
\[ d^2(x_{\bm{\tau}}^k,x_*) \le C, \qquad
 \sum_{l=1}^k \frac{d^2(x_{\bm{\tau}}^{l-1},x_{\bm{\tau}}^l)}{2\tau_l}
 \le \phi(x_0)-\phi(x_{\bm{\tau}}^k) \le C. \]
\end{lemma}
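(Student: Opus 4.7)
The first step is to extract the basic monotonicity and the telescoping bound. Since $x^l_{\bm{\tau}} \in J^\phi_{\tau_l}(x^{l-1}_{\bm{\tau}})$, taking $z = x^{l-1}_{\bm{\tau}}$ as a competitor in the definition of $\phi_{\tau_l}$ immediately gives
\[ \phi(x^l_{\bm{\tau}}) + \frac{d^2(x^{l-1}_{\bm{\tau}}, x^l_{\bm{\tau}})}{2\tau_l} \le \phi(x^{l-1}_{\bm{\tau}}), \]
so $\phi$ decreases along the scheme and in particular $\phi(x^k_{\bm{\tau}}) \le \phi(x_0) \le Q$. Summing over $l = 1, \dots, k$ yields the middle inequality of the second assertion. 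Writing $E_k := \phi(x_0) - \phi(x^k_{\bm{\tau}})$ and $r_k := d(x^k_{\bm{\tau}}, x_*)$, what remains is to bound $E_k$ and $r_k$ uniformly.

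The second step is to couple $E_k$ and $r_k$ via coercivity. For any $\tilde{\tau} \in (0, \tau_*(\phi))$, the remark preceding Assumption~\ref{as:phi} gives $\phi_{\tilde{\tau}}(x_*) > -\infty$, and testing $z = x^k_{\bm{\tau}}$ in the definition of $\phi_{\tilde{\tau}}(x_*)$ yields the lower bound $\phi(x^k_{\bm{\tau}}) \ge \phi_{\tilde{\tau}}(x_*) - r_k^2/(2\tilde{\tau})$, hence
\[ E_k \le Q - \phi_{\tilde{\tau}}(x_*) + \frac{r_k^2}{2\tilde{\tau}}. \]
On the other hand, from the step inequality one has $d(x^{l-1}_{\bm{\tau}}, x^l_{\bm{\tau}}) \le \sqrt{2\tau_l\bigl(\phi(x^{l-1}_{\bm{\tau}}) - \phi(x^l_{\bm{\tau}})\bigr)}$, and a triangle inequality followed by Cauchy--Schwarz with weights $\sqrt{\tau_l}$ produces
\[ r_k \le r_0 + \sum_{l=1}^k \sqrt{2\tau_l\bigl(\phi(x^{l-1}_{\bm{\tau}}) - \phi(x^l_{\bm{\tau}})\bigr)} \le \sqrt{Q} + \sqrt{2\,t^k_{\bm{\tau}} E_k} \le \sqrt{Q} + \sqrt{2T E_k}, \]
so that $r_k^2 \le 2Q + 4T E_k$. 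Substituting back gives the self-referencing inequality $E_k \le A + (2T/\tilde{\tau})\, E_k$, with $A$ depending only on $Q$, $\tilde{\tau}$, and $\phi_{\tilde{\tau}}(x_*)$.

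Provided $2T/\tilde{\tau} < 1$ the loop closes and furnishes uniform bounds on $E_k$, and then on $r_k$. If $\tau_*(\phi) = \infty$ one simply picks $\tilde{\tau} > 2T$ and is done. The main obstacle is the case $\tau_*(\phi) < \infty$ with $T$ large, since then no admissible $\tilde{\tau}$ satisfies $2T/\tilde{\tau} < 1$. The plan is to overcome this by subdivision: partition $[0,T]$ into $\lceil T/T_0 \rceil$ subintervals of length at most $T_0 := \tau_*(\phi)/8$, fix $\tilde{\tau} = \tau_*(\phi)/2$ so that the short-time estimate closes on each subinterval with contraction factor $2T_0/\tilde{\tau} = 1/2$, and iterate: the bounds on $r_{k_j}$ and on $\phi(x_0) - \phi(x^{k_j}_{\bm{\tau}})$ obtained at the end of the $j$-th subinterval serve as input data (in the role of $Q$) for the $(j+1)$-th. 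Monotonicity keeps $\phi(x^{k_j}_{\bm{\tau}}) \le Q$ throughout, and only finitely many iterations are needed, producing the claimed constant $C = C(x_*, \tau_*(\phi), Q, T)$.
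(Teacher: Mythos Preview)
The paper does not actually prove this lemma; it simply quotes \cite[Lemma~3.2.2]{AGSbook} without argument. Your proof is correct and is essentially the standard proof from that reference: the two coupled inequalities $E_k \le A + r_k^2/(2\tilde\tau)$ (from coercivity at $x_*$) and $r_k^2 \le 2Q + 4TE_k$ (from Cauchy--Schwarz on the step lengths) close the loop on short time horizons, and the hypothesis $|\bm\tau|\le\tau_*(\phi)/8$ is precisely what allows the subdivision-and-restart argument to reach arbitrary $T$ in a number of stages depending only on $T$ and $\tau_*(\phi)$.

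One small bookkeeping point worth tightening: you partition $[0,T]$ into subintervals of length at most $T_0$ and then speak of the discrete index $k_j$ ``at the end of the $j$-th subinterval,'' but the partition points $t^k_{\bm\tau}$ need not land on the subinterval endpoints. If $k_j$ denotes the last index with $t^{k_j}_{\bm\tau}$ in the $j$-th subinterval, then $t^{k_{j+1}}_{\bm\tau}-t^{k_j}_{\bm\tau}$ can be as large as $2T_0$ (one overshoot of size $\le|\bm\tau|\le T_0$ on each side), which with your choices $\tilde\tau=\tau_*(\phi)/2$, $T_0=\tau_*(\phi)/8$ gives contraction factor $1$ rather than $1/2$. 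The fix is trivial---either take $\tilde\tau$ slightly larger (any $\tilde\tau\in(\tau_*(\phi)/2,\tau_*(\phi))$ works), or define $k_{j+1}$ directly as the largest index with $t^{k_{j+1}}_{\bm\tau}\le t^{k_j}_{\bm\tau}+T_0$, in which case the horizon per segment is exactly $T_0$ and one checks that at most $2\lceil T/T_0\rceil$ segments are needed. Either way the conclusion stands.
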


In particular, for all $1 \le k \le N$,
we have $d^2(x_{\bm{\tau}}^{k-1},x_{\bm{\tau}}^k) \le 2C \tau_k$ and
\begin{equation}\label{eq:apri}
d^2(x_0,x_{\bm{\tau}}^k) \le \left( \sum_{l=1}^k d(x_{\bm{\tau}}^{l-1},x_{\bm{\tau}}^l) \right)^2
 \le \sum_{l=1}^k \frac{d^2(x_{\bm{\tau}}^{l-1},x_{\bm{\tau}}^l)}{\tau_l} \cdot \sum_{l=1}^k \tau_l
 \le 2C t^k_{\bm{\tau}}.
\end{equation}

\subsubsection{Convergence of discrete solutions}\label{sssc:conv}

From here on, let $\phi:(-\infty,\infty] \lra X$ be \emph{$\lambda$-convex}
(also called \emph{$\lambda$-geodesically convex}) for some $\lambda \in \R$ in the sense that
\begin{equation}\label{eq:l-conv}
\phi\big( \gamma(t) \big) \le (1-t)\phi(x) +t\phi(y) -\frac{\lambda}{2}(1-t)td^2(x,y)
\end{equation}
for any $x,y \in D(\phi)$ and some minimal geodesic $\gamma:[0,1] \lra X$ from $x$ to $y$.
(The existence of a minimal geodesic between two points in $D(\phi)$ is included in the definition,
thus in particular $(D(\phi),d)$ is geodesic.)
We remark that the compactness \eqref{it:cpt} in Assumption~\ref{as:phi}
implies the coercivity \eqref{it:coer} in this case
(see \cite[Lemma~2.4.8]{AGSbook}; we even have $\tau_*(\phi)=\infty$ if $\lambda \ge 0$).

Fix an initial point $x_0 \in D(\phi)$.
Take a sequence of partitions
$\{\mathscr{P}_{\bm{\tau}_i}\}_{i \in \N}$ such that $\lim_{i \to \infty} |\bm{\tau}_i|=0$
and associated discrete solutions $\{x_{\bm{\tau}_i}^k\}_{k \ge 0}$
with $x_{\bm{\tau}_i}^0=x_0$.
Under Assumption~\ref{as:phi}\eqref{it:cpt},
by the compactness argument (\cite[Proposition~2.2.3]{AGSbook}),
a subsequence of the interpolated curves
\begin{equation}\label{eq:barx}
\bm{\bar{x}}_{\bm{\tau}_i}(0) :=x_0,\qquad
 \bm{\bar{x}}_{\bm{\tau}_i}(t) :=x_{\bm{\tau}_i}^k\quad
 \text{for } t \in (t_{\bm{\tau}_i}^{k-1},t_{\bm{\tau}_i}^k]
\end{equation}
converges to a curve $\xi:[0,\infty) \lra D(\phi)$ point-wise in $t \in [0,\infty)$.
In general, under the coercivity and $\lambda$-convexity of $\phi$ (but without the compactness),
if a curve $\xi$ is obtained as above
(called a \emph{generalized minimizing movement}; see \cite[Definition~2.0.6]{AGSbook}),
then it is locally Lipschitz on $(0,\infty)$ and satisfies $\lim_{t \downarrow 0}\xi(t)=x_0$
as well as the \emph{energy dissipation identity}:
\begin{equation}\label{eq:gf}
\phi\big( \xi(T) \big) =\phi\big( \xi(S) \big)
 -\frac{1}{2} \int_S^T \{ |\dot{\xi}|^2 +|\nabla \phi|^2(\xi) \} \,dt.
\end{equation}
Here
\[ |\dot{\xi}|(t) :=\lim_{s \to t} \frac{d(\xi(s),\xi(t))}{|t-s|} \]
is the \emph{metric speed} existing at almost all $t$, and
\[ |\nabla \phi|(x) :=\limsup_{y \to x} \frac{\max\{\phi(x)-\phi(y),0\}}{d(x,y)} \]
is the (descending) \emph{local slope}
(see \cite[Theorem~2.4.15]{AGSbook}).
We remark that $|\nabla\phi|$ is lower semi-continuous (\cite[Corollary~2.4.10]{AGSbook}) and
$ \lim_{i \to \infty}\phi(\bm{\bar{x}}_{\bm{\tau}_i}(t)) =\phi(\xi(t))$
for all $t \ge 0$ (\cite[Theorem~2.3.3]{AGSbook}).
The equation \eqref{eq:gf} can be thought of as a metric formulation of the differential equation
$\dot{\xi}(t)=-\nabla\phi(\xi(t))$, thus $\xi$ will be called a \emph{gradient curve} of $\phi$
starting from $x_0$.
We remark that one does not have uniqueness of gradient curves in this generality
(see \cite[Example~4.23]{AG} for a simple example in the $\ell^2_{\infty}$-space).

\begin{remark}\label{rm:x0}
One can relax the scheme by allowing varying initial points: $x_{\bm{\tau}_i}^0 \neq x_0$.
Then assuming $x_{\bm{\tau}_i}^0 \to x_0$ and $\phi(x_{\bm{\tau}_i}^0) \to \phi(x_0)$
yields the same convergence results.
In our setting, such a convergence can also follow from the comparison estimate \eqref{eq:t/s-2}
(see the proof of Theorem~\ref{th:T422}).
\end{remark}

\subsection{$\CAT(1)$-spaces}\label{ssc:CAT}

We refer to \cite{BBI} for the basics of $\CAT(1)$-spaces and for more general metric geometry.

Given three points $x,y,z\in X$ with $d(x,y)+d(y,z)+d(z,x)<2\pi$,
we can take corresponding points $\tilde{x},\tilde{y},\tilde{z}$
in the $2$-dimensional unit sphere $\Sph^2$ (uniquely up to rigid motions) such that
\[ d_{\Sph^2}(\tilde{x},\tilde{y})=d(x,y), \qquad d_{\Sph^2}(\tilde{y},\tilde{z})=d(y,z), \qquad
 d_{\Sph^2}(\tilde{z},\tilde{x})=d(z,x). \]
We call $\triangle\tilde{x}\tilde{y}\tilde{z}$ a \emph{comparison triangle} of $\triangle xyz$ in $\Sph^2$.

\begin{definition}[$\CAT(1)$-spaces]\label{df:CAT}
A geodesic metric space $(X,d)$ is called a \emph{$\CAT(1)$-space} if,
for any $x,y,z \in X$ with $d(x,y)+d(y,z)+d(z,x)<2\pi$ and any minimal geodesic
$\gamma:[0,1] \lra X$ from $y$ to $z$, we have
\[ d\big( x,\gamma(t) \big) \le d_{\Sph^2}\big( \tilde{x},\tilde{\gamma}(t) \big) \]
at all $t \in [0,1]$, where 
$\triangle\tilde{x}\tilde{y}\tilde{z} \subset \Sph^2$ is a comparison triangle of $\triangle xyz$
and $\tilde{\gamma}:[0,1] \lra \Sph^2$ is the minimal geodesic
from $\tilde{y}$ to $\tilde{z}$.
\end{definition}

It is readily observed from the definition that each pair of points $x,y \in X$ with $d(x,y)<\pi$
is joined by a unique minimal geodesic.
Fundamental examples of $\CAT(1)$-spaces are the following.

\begin{example}\label{ex:CAT}
(1)
A complete, simply connected Riemannian manifold endowed with the Riemannian distance
is a $\CAT(1)$-space if and only if its sectional curvature is not greater than $1$ everywhere.

(2)
All \emph{$\CAT(0)$-spaces}, which are defined similarly to Definition~\ref{df:CAT}
by replacing $\Sph^2$ with $\R^2$, are $\CAT(1)$-spaces.
Important examples of $\CAT(0)$-spaces are Hadamard manifolds,
Hilbert spaces, trees and Euclidean buildings.

(3)
Further examples of $\CAT(1)$-spaces include orbifolds obtained as quotient spaces of $\CAT(1)$-manifolds,
and spherical buildings.
See \cite[\S 9.1]{BBI} for more examples.
\end{example}

\begin{remark}\label{rm:CAT}
For general $\kappa \in \R$, $\CAT(\kappa)$-spaces are defined in the same manner
by employing comparison triangles in the $2$-dimensional space form of constant curvature $\kappa$.
If $(X,d)$ is a $\CAT(\kappa)$-space, then it is also $\CAT(\kappa')$ for all $\kappa'>\kappa$
and the scaled metric space $(X,cd)$ with $c>0$ is $\CAT(c^{-2}\kappa)$.
Therefore considering $\CAT(1)$-spaces covers all $\CAT(\kappa)$-spaces up to scaling.
\end{remark}

As was mentioned in the introduction,
the properties of $\CAT(1)$-spaces needed in our discussion are
only the semi-convexity of the squared distance function and the commutativity \eqref{eq:commu}.
The latter is a consequence of the first variation formula.
Let us review them.

\begin{lemma}[Semi-convexity of distance functions]\label{lm:Kconv}
Let $(X,d)$ be a $\CAT(1)$-space and take $R \in (0,\pi)$.
Then there exists $K=K(R) \in \R$ such that the squared distance function $d^2(x,\cdot)$ is $K$-convex
on the open $R$-ball $B(x,R)$ for all $x \in X$.
\end{lemma}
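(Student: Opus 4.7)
The plan is to transfer the question, via the $\CAT(1)$ comparison inequality, to the model space $\Sph^2$, where the squared distance function is smooth and its Hessian can be bounded explicitly. Fix $y,z\in B(x,R)$ and a minimal geodesic $\gamma:[0,1]\lra X$ from $y$ to $z$. First I would ensure the perimeter condition $d(x,y)+d(y,z)+d(z,x)<2\pi$ (automatic in the applications of interest, where $y,z$ are close to $x$), then form the comparison triangle $\triangle\tilde x\tilde y\tilde z\subset\Sph^2$ together with the minimal geodesic $\tilde\gamma:[0,1]\lra\Sph^2$ from $\tilde y$ to $\tilde z$. Definition~\ref{df:CAT} yields
\[ d^2\bigl(x,\gamma(t)\bigr)\le d_{\Sph^2}^2\bigl(\tilde x,\tilde\gamma(t)\bigr) \quad\text{for all } t\in[0,1], \]
with equality at the endpoints and $d(y,z)=d_{\Sph^2}(\tilde y,\tilde z)$ by construction. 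The problem therefore reduces to showing $K$-convexity of $d_{\Sph^2}^2(\tilde x,\cdot)$ along $\tilde\gamma$ with a constant $K=K(R)$ depending only on $R$.

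I would then compute the Hessian of $r^2(\cdot):=d_{\Sph^2}^2(\tilde x,\cdot)$ directly on $\Sph^2$. In geodesic polar coordinates around $\tilde x$, where the round metric is $dr^2+\sin^2\!r\,d\theta^2$, a short computation gives $\Hess(r^2)$ with eigenvalues (relative to $g_{\Sph^2}$) equal to $2$ in the radial direction and $2r\cot r$ in the tangential direction. Since $r\mapsto r\cot r$ is strictly decreasing on $(0,\pi)$ (from $(r\cot r)'=(\tfrac12\sin 2r-r)/\sin^2\!r\le 0$), one obtains $\Hess(r^2)\ge 2R\cot R\cdot g_{\Sph^2}$ throughout $B(\tilde x,R)$. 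Integrating this pointwise lower bound along $\tilde\gamma$ gives chord $K$-convexity with $K=2R\cot R$ --- positive for $R<\pi/2$, zero at $R=\pi/2$, and $\to-\infty$ as $R\uparrow\pi$; combining with the comparison inequality above yields the claim.

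The main obstacle is that on $\Sph^2$ a minimal geodesic $\tilde\gamma$ between two points in $B(\tilde x,R)$ need not remain inside $B(\tilde x,R)$ once $R>\pi/2$, so the pointwise Hessian lower bound can deteriorate along $\tilde\gamma$ wherever $r(t)>R$. I would handle this in one of two ways: either restrict the statement to geodesics whose comparison $\tilde\gamma$ stays in the closed ball --- which is harmless for the application to the Moreau--Yosida scheme \eqref{eq:MY}, since there the minimizer $z\in J^{\phi}_{\tau}(x)$ is close to $x$ --- or, for a statement uniform in $y,z\in B(x,R)$, use the spherical law of cosines to express $\cos r(t)$ explicitly and bound $\sup_t r(t)$ by some $R'=R'(R)<\pi$, replacing $K(R)$ by $2R'\cot R'$.
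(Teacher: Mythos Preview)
Your main line---reduce to $\Sph^2$ via the $\CAT(1)$ comparison and then bound the Hessian of $d_{\Sph^2}^2(\tilde x,\cdot)$---is exactly the paper's argument; the paper simply invokes smoothness of $d_{\Sph^2}^2(\tilde x,\cdot)$ on $B(\tilde x,\pi)$ without writing out the explicit constant $K=2R\cot R$. Your first fix (restrict attention to geodesics remaining in the ball) is also what the paper implicitly intends and explicitly uses in the application: in the proof of Lemma~\ref{lm:key} it is verified that the relevant geodesic $\eta$ lies in $B(y,R)$ before the $K$-convexity is invoked.

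Your second fix, however, does not work as stated. For $R>\pi/2$, take $\tilde y,\tilde z$ on a great circle through $\tilde x$, both at distance $R-\ve$ from $\tilde x$ and on opposite sides of $\tilde x$; the minimal geodesic between them has length $2(\pi-R+\ve)<\pi$ and passes through the antipode $-\tilde x$, so $\sup_t r(\tilde\gamma(t))=\pi$ and no uniform bound $R'(R)<\pi$ is available. (Perturbing slightly off this great circle gives perimeter strictly below $2\pi$ with $\sup_t r(\tilde\gamma(t))$ still arbitrarily close to $\pi$.) The $K$-convexity for arbitrary $y,z\in B(x,R)$ can in fact be rescued by a direct chord estimate---both endpoints lie at distance at least $\pi-R$ from $-\tilde x$, which controls the scale at which the corner of $r^2$ at $-\tilde x$ is seen---but not via a Hessian bound on an enlarged ball $B(\tilde x,R')$.
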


\begin{proof}
By the definition of $\CAT(1)$-spaces, it is enough to show the claim in $\Sph^2$.
Then the $K$-convexity is a direct consequence of the smoothness of $d_{\Sph^2}^2(\tilde{x},\cdot)$
on $B(\tilde{x},\pi)$.
$\qedd$
\end{proof}

Clearly $K(R)>0$ if $R<\pi/2$ (see, e.g., \cite{Oconv} for the precise estimate)
and $K(R)<0$ if $R>\pi/2$.
We can define the \emph{angle} between two geodesics
$\gamma$ and $\eta$ emanating from the same point $\gamma(0)=\eta(0)=x$ by
\[ \angle_x(\gamma,\eta):=\lim_{s,t \downarrow 0}\angle\widetilde{\gamma(s)}\tilde{x}\widetilde{\eta(t)}, \]
where $\angle\widetilde{\gamma(s)}\tilde{x}\widetilde{\eta(t)}$ is the angle at $\tilde{x}$ of a comparison triangle
$\triangle \widetilde{\gamma(s)}\tilde{x}\widetilde{\eta(t)}$ in $\Sph^2$.
By the definition of the angle, we obtain the following (see \cite[Theorem~4.5.6, Remark~4.5.12]{BBI}).

\begin{theorem}[First variation formula]\label{th:1vf}
Let $\gamma:[0,1] \lra X$ be a geodesic from $x$ to $z$, and take $y \in X$ with $0<d(x,y)<\pi$.
Then we have
\[ \lim_{s \downarrow 0}\frac{d(\gamma(s),y)-d(x,y)}{s}
 =-d(x,z) \cos \angle_x (\gamma,\eta), \]
where $\eta:[0,1] \to X$ is the unique minimal geodesic from $x$ to $y$.
\end{theorem}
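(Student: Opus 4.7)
The plan is to transfer the computation to the comparison 2-sphere and use the spherical law of cosines together with the definition of $\angle_x(\gamma,\eta)$ as a limit of comparison angles. For each sufficiently small $s>0$, I form the comparison triangle $\triangle\tilde{x}\widetilde{\gamma(s)}\tilde{y}\subset\Sph^2$ matching all three side lengths of $\triangle x\gamma(s)y$, and let $\beta(s)\in[0,\pi]$ denote the angle at $\tilde{x}$. The spherical law of cosines---an identity in the comparison triangle---reads
\[
\cos d(\gamma(s),y)=\cos\bigl(sd(x,z)\bigr)\cos d(x,y)+\sin\bigl(sd(x,z)\bigr)\sin d(x,y)\cos\beta(s),
\]
and since $0<d(x,y)<\pi$ gives $\sin d(x,y)>0$, a first-order Taylor expansion in $s$ yields $[d(\gamma(s),y)-d(x,y)]/s=-d(x,z)\cos\beta(s)+O(s)$. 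The task thus reduces to proving $\lim_{s\downarrow 0}\beta(s)=\angle_x(\gamma,\eta)$.

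For the lower bound, I invoke the Alexandrov monotonicity of comparison angles implied by the $\CAT(1)$-condition: $\bar{\angle}(\widetilde{\gamma(s')},\tilde{x},\widetilde{\eta(t')})\le\bar{\angle}(\widetilde{\gamma(s)},\tilde{x},\widetilde{\eta(t)})$ whenever $s'\le s$ and $t'\le t$, with common infimum equal to $\angle_x(\gamma,\eta)$. Specialising to $t=1$ (so that $\widetilde{\eta(1)}=\tilde{y}$) gives $\beta(s)\ge\angle_x(\gamma,\eta)$, hence $\cos\beta(s)\le\cos\angle_x(\gamma,\eta)$ and $\liminf_{s\downarrow 0}[d(\gamma(s),y)-d(x,y)]/s\ge-d(x,z)\cos\angle_x(\gamma,\eta)$. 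For the upper bound, I fix $\lambda>0$ and for $s$ with $\lambda s\le 1$ apply the triangle inequality in the form $d(\gamma(s),y)-d(x,y)\le d(\gamma(s),\eta(\lambda s))-\lambda s d(x,y)$. The spherical law of cosines in the comparison triangle $\triangle\tilde{x}\widetilde{\gamma(s)}\widetilde{\eta(\lambda s)}$, whose angle at $\tilde{x}$ converges to $\angle_x(\gamma,\eta)$ by the joint-limit definition, gives
\[
\frac{d(\gamma(s),\eta(\lambda s))}{s}\to\sqrt{d(x,z)^2+\lambda^2d(x,y)^2-2\lambda d(x,z)d(x,y)\cos\angle_x(\gamma,\eta)}\qquad\text{as }s\downarrow 0.
\]
Letting $\lambda\to\infty$ afterwards, the square root expands as $\lambda d(x,y)-d(x,z)\cos\angle_x(\gamma,\eta)+O(1/\lambda)$, the $\lambda d(x,y)$ terms cancel with the $-\lambda d(x,y)$ coming from the triangle inequality, and I obtain $\limsup\le-d(x,z)\cos\angle_x(\gamma,\eta)$, matching the lower bound.

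The main obstacle is calibrating the coupling between $s$ and the $\eta$-parameter in the triangle inequality used for the upper bound: taking the two parameters independently (fixing one at $1$ makes the triangle inequality a tautology, sending one to $0$ with the other fixed gives only the trivial Lipschitz bound) never recovers the sharp constant. The choice $t=\lambda s$ followed by $\lambda\to\infty$ after $s\downarrow 0$ is precisely what exploits the joint-in-$(s,t)$ definition of $\angle_x(\gamma,\eta)$ and matches the monotonicity-derived lower bound.
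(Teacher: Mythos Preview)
Your argument is correct. The paper itself does not prove Theorem~\ref{th:1vf}; it merely cites \cite[Theorem~4.5.6, Remark~4.5.12]{BBI} and moves on. What you have written is essentially the standard textbook derivation: reduce via the spherical law of cosines to showing that the comparison angle $\beta(s)=\bar{\angle}_x(\gamma(s),y)$ converges to $\angle_x(\gamma,\eta)$, get $\beta(s)\ge\angle_x(\gamma,\eta)$ from Alexandrov monotonicity (which gives the $\liminf$), and squeeze the $\limsup$ by the triangle inequality through $\eta(\lambda s)$ followed by $\lambda\to\infty$. All steps are sound; in particular, the joint limit defining $\angle_x(\gamma,\eta)$ exists in a $\CAT(1)$-space precisely because of the monotonicity you invoke, so the diagonal approach $(s,\lambda s)\to(0,0)$ is legitimate. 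The only cosmetic point is that $\bar{\angle}(\widetilde{\gamma(s')},\tilde{x},\widetilde{\eta(t')})$ should strictly be written as the comparison angle of the original points, but the intent is clear.
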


\section{Key lemma}\label{sc:key}

In this section, let $(X,d)$ be a complete $\CAT(1)$-space and
$\phi:X \lra (-\infty,\infty]$ satisfy the $\lambda$-convexity for some $\lambda \in \R$ and
Assumption~\ref{as:phi}\eqref{it:coer}.
Note that \eqref{eq:l-conv} holds along every minimal geodesic
since minimal geodesics are unique between points of distance $<\pi$.
The next lemma, generalizing \cite[Theorem~4.1.2(ii)]{AGSbook}
to the case where both $\lambda$ and $K$ can be negative,
will be a key tool in the following sections.

\begin{lemma}[Key lemma]\label{lm:key}
Let $x \in D(\phi)$ and $\tau \in (0,\min\{\pi^2/(2C),\tau_*(\phi)/8\})$
with $C=C(x,\tau_*(\phi),\phi(x),\tau_*(\phi)/8)$ from Lemma~$\ref{lm:L322}$.
Take $x_{\tau} \in J^{\phi}_{\tau}(x)$.
Then we have, for any $y \in D(\phi) \cap B(x_{\tau},R-d(x,x_{\tau}))$ with $R<\pi$
and for $K=K(R)$ as in Lemma~$\ref{lm:Kconv}$,
\begin{align*}
d^2(x_{\tau},y)
&\le d^2(x,y) -\lambda \tau d^2(x_{\tau},y) +2\tau\{ \phi(y)-\phi(x_{\tau}) \}
 -\frac{K}{2}d^2(x,x_{\tau}) \\
&\le d^2(x,y) -\lambda \tau d^2(x_{\tau},y) +2\tau\{ \phi(y)-\phi(x_{\tau}) \} \\
&\quad +\max\{0,-K\} \cdot \tau\{ \phi(x)-\phi(x_{\tau}) \}.
\end{align*}
\end{lemma}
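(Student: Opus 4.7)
The plan is to pass from the minimization defining $x_\tau$ to a one-sided derivative inequality, and then use the commutativity to transfer this derivative onto a favorable geodesic where the $K$-convexity of the squared distance produces a term in $d^2(x,x_\tau)$ (rather than $d^2(x_\tau,y)$). Let $\gamma\colon[0,1]\to X$ and $\eta\colon[0,1]\to X$ denote the unique minimal geodesics from $x_\tau$ to $y$ and from $x_\tau$ to $x$, respectively; their existence and uniqueness follow from $d(x_\tau,y)<R<\pi$ and from $d(x,x_\tau)^2\le 2C\tau<\pi^2$, where the latter bound is Lemma~\ref{lm:L322} applied to the one-step partition.

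I would derive the first inequality in three steps. First, combining the minimality $\phi(x_\tau)+d^2(x,x_\tau)/(2\tau)\le \phi(\gamma(t))+d^2(x,\gamma(t))/(2\tau)$ with the $\lambda$-convexity of $\phi$ along $\gamma$, dividing by $t>0$, and letting $t\downarrow 0$ yields
\[
\phi(x_\tau)-\phi(y)+\frac{\lambda}{2}d^2(x_\tau,y)\le \frac{1}{2\tau}\lim_{t\downarrow 0}\frac{d^2(x,\gamma(t))-d^2(x,x_\tau)}{t}.
\]
Second, the commutativity \eqref{eq:commu} applied at $x_\tau$ with the pair $(\gamma,\eta)$ identifies this right-hand derivative with the analogous one along $\eta$:
\[
\lim_{t\downarrow 0}\frac{d^2(\gamma(t),x)-d^2(x_\tau,x)}{t}=\lim_{t\downarrow 0}\frac{d^2(\eta(t),y)-d^2(x_\tau,y)}{t}.
\]
Third, since $d(y,\eta(t))\le d(y,x_\tau)+d(x_\tau,x)<R$, Lemma~\ref{lm:Kconv} gives the $K$-convexity of $d^2(y,\cdot)$ along $\eta$; dividing the resulting inequality by $t$ and letting $t\downarrow 0$ produces
\[
\lim_{t\downarrow 0}\frac{d^2(\eta(t),y)-d^2(x_\tau,y)}{t}\le d^2(x,y)-d^2(x_\tau,y)-\frac{K}{2}d^2(x,x_\tau).
\]
Chaining these three inequalities and multiplying through by $2\tau$ yields the first displayed inequality after rearrangement.

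The second inequality follows easily from the first: testing the minimality with the competitor $z=x$ gives $d^2(x,x_\tau)\le 2\tau\{\phi(x)-\phi(x_\tau)\}$. If $K\ge 0$, the term $-\frac{K}{2}d^2(x,x_\tau)$ is nonpositive and hence bounded above by $0=\max\{0,-K\}\cdot\tau\{\phi(x)-\phi(x_\tau)\}$, the latter being nonnegative because $\phi(x_\tau)\le\phi(x)$; if $K<0$, multiplying the competitor estimate by $-K/2>0$ yields directly $-\frac{K}{2}d^2(x,x_\tau)\le \max\{0,-K\}\,\tau\{\phi(x)-\phi(x_\tau)\}$.

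The delicate part I expect is the systematic verification that the minimal geodesics involved lie in the balls where Lemma~\ref{lm:Kconv} applies, and that the commutativity is invoked in the correct direction. Crucially, applying the $K$-convexity naively to $d^2(x,\cdot)$ along $\gamma$ (without passing through $\eta$ via commutativity) would produce the term $-\frac{K}{2}d^2(x_\tau,y)$ on the right-hand side, which is useless when $K\le 0$; the commutativity is precisely the Riemannian ingredient that converts this into the usable $-\frac{K}{2}d^2(x,x_\tau)$, on which all the subsequent generalizations of the AGS theory rely.
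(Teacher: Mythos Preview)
Your proposal is correct and follows essentially the same route as the paper's proof: the same geodesics $\gamma$ and $\eta$ from $x_\tau$, the same use of minimality plus $\lambda$-convexity to get the one-sided derivative bound, the commutativity \eqref{eq:commu} to swap the derivative onto $\eta$, and then the $K$-convexity of $d^2(y,\cdot)$ along $\eta\subset B(y,R)$; your final paragraph even reproduces the content of the paper's Remark~\ref{rm:key}(a).
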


(We remark that $C=C(x,\tau_*(\phi),\phi(x),\tau_*(\phi)/8)$ in the lemma
means the constant $C(x,\tau_*(\phi),Q,T)$ from Lemma~\ref{lm:L322}
with $Q=\phi(x)$ and $T=\tau_*(\phi)/8$.)

\begin{proof}
Observe that $d^2(x,x_{\tau}) \le 2C\tau <\pi^2$ by Lemma~\ref{lm:L322}
and the choice of $\tau$.
Let $\gamma:[0,1] \lra X$ be the minimal geodesic from $x_{\tau}$ to $y$,
and $\eta:[0,1] \lra X$ from $x_{\tau}$ to $x$.
For any $s \in (0,1)$, by the definition of $J^{\phi}_{\tau}(x)$ and the $\lambda$-convexity of $\phi$,
we have
\begin{align*}
\phi(x_{\tau}) +\frac{d^2(x,x_{\tau})}{2\tau}
&\le \phi\big( \gamma(s) \big) +\frac{d^2(x,\gamma(s))}{2\tau} \\
&\le (1-s)\phi(x_{\tau}) +s\phi(y) -\frac{\lambda}{2}(1-s)sd^2(x_{\tau},y)
 +\frac{d^2(x,\gamma(s))}{2\tau}.
\end{align*}
Hence
\[ \phi(x_{\tau})
 \le \phi(y) +\frac{1}{2\tau}\frac{d^2(x,\gamma(s))-d^2(x,x_{\tau})}{s}
 -\frac{\lambda}{2}(1-s)d^2(x_{\tau},y). \]
Applying the first variation formula (Theorem~\ref{th:1vf}) twice,
we observe the \emph{commutativity}:
\begin{align}
\lim_{s \downarrow 0} \frac{d^2(x,\gamma(s))-d^2(x,x_{\tau})}{s}
&= -2d(x_{\tau},x) d(x_{\tau},y) \cos\angle_{x_{\tau}}(\gamma,\eta) \nonumber\\
&= \lim_{t \downarrow 0} \frac{d^2(\eta(t),y)-d^2(x_{\tau},y)}{t}. \label{eq:comm}
\end{align}
Notice that $\eta$ is contained in $B(y,R)$ by the choice of $y$.
Thus it follows from the $K$-convexity of $d^2(\cdot,y)$ in $B(y,R)$ that
\begin{align*}
\lim_{t \downarrow 0} \frac{d^2(\eta(t),y)-d^2(x_{\tau},y)}{t}
&\le d^2(x,y) -d^2(x_{\tau},y) -\frac{K}{2}d^2(x,x_{\tau}) \\
&\le d^2(x,y) -d^2(x_{\tau},y) +\max\{0,-K\} \cdot \tau \{\phi(x)-\phi(x_{\tau})\}.
\end{align*}
This completes the proof.
$\qedd$
\end{proof}

\begin{remark}\label{rm:key}
(a)
Used in the proof of \cite[Theorem~4.1.2(ii)]{AGSbook} is the direct application of the convexity
of $\phi$ and $d^2(x,\cdot)$ along $\gamma$, which implies in our setting
\[ \frac{K}{2} d^2(x_{\tau},y)
 \le d^2(x,y) -\lambda \tau d^2(x_{\tau},y) +2\tau\{ \phi(y)-\phi(x_{\tau}) \} -d^2(x,x_{\tau}). \]
This coincides with our estimate when $K=2$.
The commutativity was used to move the coefficient $K/2$ from $d^2(x_{\tau},y)$ to $d^2(x,x_{\tau})$,
then one can efficiently estimate $d^2(x_{\tau},y)-d^2(x,y)$ (see Theorem~\ref{th:T414}).

(b)
As we mentioned in the introduction (see the paragraph including \eqref{eq:commu}),
the Riemannian nature of the space (i.e., the angle) is essential in the commutativity \eqref{eq:comm}.
In fact, on a Finsler manifold $(M,F)$, \eqref{eq:commu} (written using only the distance) implies
\[ g_v(v,w)=g_w(v,w) \qquad \text{for all}\ v,w \in T_xM \setminus \{0\},\ x \in M. \]
These notations and the basics of Finsler geometry can be found in \cite{OSnc} for instance.
Thus we find, for $v \neq \pm w$,
\begin{align*}
&F^2(v+w)+F^2(v-w)
 = g_{v+w}(v+w,v+w) +g_{v-w}(v-w,v-w) \\
&= g_{v+w}(v,v+w) +g_{v+w}(w,v+w) +g_{v-w}(v,v-w) -g_{v-w}(w,v-w) \\
&= g_v(v,v+w) +g_w(w,v+w) +g_v(v,v-w) -g_w(w,v-w) \\
&= 2g_v(v,v) +2g_w(w,w) =2F^2(v) +2F^2(w).
\end{align*}
This is the parallelogram identity on $T_xM$ and hence $F$ is Riemannian.

The commutativity \eqref{eq:commu} is the essential property connecting
the (geodesic) convexity of a function and the contraction property
of its gradient flow (see Theorem~\ref{th:P431}).
On Finsler manifolds, the contraction property is characterized by
the \emph{skew-convexity} which is different from the usual convexity along geodesics
(see \cite{OSnc} for details).
\end{remark}

\section{Applications to gradient flows}\label{sc:gf}

The estimate in Lemma~\ref{lm:key} is worse than the one in \cite[Theorem~4.1.2(ii)]{AGSbook}
because of the generality that $K$ can be less than $2$ and even negative.
Nonetheless, as we shall see in this section,
Lemma~\ref{lm:key} is enough to generalize the argumentation in Chapter~4 of \cite{AGSbook}.
We will give at least sketches of the proofs for completeness.

Our argument covers two cases.
In both cases, $(X,d)$ is complete,
$\phi:X \lra (-\infty,\infty]$ is lower semi-continuous, $\lambda$-convex and $D(\phi) \neq \emptyset$.

\renewcommand{\thecase}{\Roman{case}}
\begin{case}\label{case:CAT}
$(X,d)$ is a $\CAT(1)$-space.
\end{case}

\begin{case}\label{case:Kcon}
$(X,d)$ satisfies the commutativity~\eqref{eq:comm} and the $K$-convexity of the squared distance function,
and $\phi$ satisfies the coercivity condition $($Assumption~$\ref{as:phi}\eqref{it:coer})$.
$($To be precise, the commutativity, $K$-convexity and $\lambda$-convexity
are assumed to hold along the same family of geodesics.$)$
\end{case}

We stress that both $\lambda,K \in \R$ can be negative.
In Case~\ref{case:Kcon}, the $K$-convexity is assumed globally,
thus the assertion of Lemma~\ref{lm:key} holds for any $x,y \in D(\phi)$ and $\tau \in (0,\tau_*(\phi))$.
We recall that the coercivity holds if, for instance,
the compactness condition (Assumption~\ref{as:phi}\eqref{it:cpt}) is satisfied.
In Case~\ref{case:CAT},
the coercivity is guaranteed by restricting ourselves to balls with radii $\le R<\pi/4$.
In these (convex) balls the squared distance function is $K$-convex with $K=K(2R)>0$ from Lemma~\ref{lm:Kconv},
and hence $\phi+d^2(x,\cdot)/(2\tau)$ is $(\lambda+K/(2\tau))$-convex.
This implies that $J^{\phi}_{\tau}(x)$ is nonempty and consists of a single point
for $\tau \in (0,-K/(2\lambda))$ even if $\lambda<0$ (by, for example, \cite[Lemma~2.4.8]{AGSbook}).
By the same reasoning, if $K>0$ in Case~\ref{case:Kcon},
then Assumption~\ref{as:phi}\eqref{it:coer} is redundant.

To include both cases keeping clarity of the presentation,
we discuss under the global $K$-convexity of the squared distance function
and Assumption~\ref{as:phi}\eqref{it:coer}.
Thus we implicitly assume $\diam X<\pi/2$ if we are in Case~\ref{case:CAT}.
This costs no generality for the construction of gradient curves since it is a local problem.
We explain how to extend the properties of the gradient flow
to the case of $\diam X \ge \pi/2$ in \S \ref{ssc:pi}.

\subsection{Interpolations}\label{ssc:piece}

Given an initial point $x_0 \in D(\phi)$ and a partition $\mathscr{P}_{\bm{\tau}}$
with $|\bm{\tau}|<\tau_*(\phi)$, we fix a discrete solution $\{x_{\bm{\tau}}^k\}_{k \ge 0}$
of \eqref{eq:dgf}.
Let us also take a point $y \in X$.
Similarly to Chapter~4 of \cite{AGSbook}, we interpolate the discrete data
$x_{\bm{\tau}}^k$, $d(x_{\bm{\tau}}^k,y)$ and $\phi(x_{\bm{\tau}}^k)$ as follows
(recall \eqref{eq:barx}):

For $t \in (t_{\bm{\tau}}^{k-1},t_{\bm{\tau}}^k]$, $k \in \N$, define
\begin{align*}
\bm{\bar{x}_{\tau}}(t) &:= x_{\bm{\tau}}^k
 \in J^{\phi}_{\tau_k}(x_{\bm{\tau}}^{k-1}) \quad (\bm{\bar{x}_{\tau}}(0):=x_0), \\
\bm{\bar{d}_{\tau}}(t;y)
&:= \left\{ d^2(x_{\bm{\tau}}^{k-1},y) +\frac{t-t_{\bm{\tau}}^{k-1}}{\tau_k}
 \{ d^2(x_{\bm{\tau}}^k,y)-d^2(x_{\bm{\tau}}^{k-1},y) \} \right\}^{1/2}, \\
\bm{\bar{\phi}_{\tau}}(t)
&:= \phi(x_{\bm{\tau}}^{k-1}) +\frac{t-t_{\bm{\tau}}^{k-1}}{\tau_k}
 \{ \phi(x_{\bm{\tau}}^k)-\phi(x_{\bm{\tau}}^{k-1}) \}.
\end{align*}
Recall that $\tau_k=t_{\bm{\tau}}^k -t_{\bm{\tau}}^{k-1}$ and
note that $\bm{\bar{\phi}_{\tau}}$ is non-increasing.

Then Lemma~\ref{lm:key} yields the following discrete version of the \emph{evolution variational inequality}
(see \cite[Theorem~4.1.4]{AGSbook}; we remark that our residual function
$\mathscr{R}_{\bm{\tau},K}$ is different from that in \cite{AGSbook} and depends on $K$).

\begin{theorem}[Discrete evolution variational inequality]\label{th:T414}
Assuming $|\bm{\tau}|<\tau_*(\phi)$, we have
\[ \frac{1}{2} \frac{d}{dt} \big[ \bm{\bar{d}_{\tau}}^2(t;y) \big]
 +\frac{\lambda}{2}d^2\big( \bm{\bar{x}_{\tau}}(t),y \big) +\bm{\bar{\phi}_{\tau}}(t) -\phi(y)
 \le \mathscr{R}_{\bm{\tau},K}(t) \]
for almost all $t \in (0,T)$ and all $y \in D(\phi)$,
where for $t \in (t_{\bm{\tau}}^{k-1},t_{\bm{\tau}}^k]$
\[ \mathscr{R}_{\bm{\tau},K}(t):=
 \left( \frac{t_{\bm{\tau}}^k-t}{\tau_k} +\frac{\max\{0,-K\}}{2} \right)
 \{ \phi(x_{\bm{\tau}}^{k-1})-\phi(x_{\bm{\tau}}^k) \}. \]
\end{theorem}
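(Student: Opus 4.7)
The plan is to reduce everything to a one-step discrete inequality, which is essentially a rearrangement of Lemma~\ref{lm:key}. Fix $t \in (t_{\bm{\tau}}^{k-1}, t_{\bm{\tau}}^k)$; the derivative of the piecewise-linear function $\bm{\bar{d}_{\tau}}^2(\cdot;y)$ at $t$ is just the difference quotient $[d^2(x_{\bm{\tau}}^k,y) - d^2(x_{\bm{\tau}}^{k-1},y)]/\tau_k$, and $\bm{\bar{x}_{\tau}}(t) = x_{\bm{\tau}}^k$. Rewriting the linear interpolation $\bm{\bar{\phi}_{\tau}}(t)$ as a barycentric combination expanded around the right endpoint $x_{\bm{\tau}}^k$ gives
\[
\bm{\bar{\phi}_{\tau}}(t) - \phi(x_{\bm{\tau}}^k) = \frac{t_{\bm{\tau}}^k - t}{\tau_k}\bigl\{\phi(x_{\bm{\tau}}^{k-1}) - \phi(x_{\bm{\tau}}^k)\bigr\},
\]
which already reproduces the first contribution to the residual $\mathscr{R}_{\bm{\tau},K}(t)$.

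Next I would invoke Lemma~\ref{lm:key} with the substitutions $x = x_{\bm{\tau}}^{k-1}$, $x_{\tau} = x_{\bm{\tau}}^k$ and $\tau = \tau_k$; the standing hypothesis $|\bm{\tau}| < \tau_*(\phi)$ together with the global form of the $K$-convexity adopted in Section~\ref{sc:gf} ensure that the lemma applies for every $y \in D(\phi)$. Dividing the resulting inequality by $2\tau_k$ and collecting the terms involving $\phi(x_{\bm{\tau}}^k)$, $\phi(y)$ and $d^2(x_{\bm{\tau}}^k,y)$ on the left-hand side yields
\[
\frac{d^2(x_{\bm{\tau}}^k,y) - d^2(x_{\bm{\tau}}^{k-1},y)}{2\tau_k} + \frac{\lambda}{2}d^2(x_{\bm{\tau}}^k,y) + \phi(x_{\bm{\tau}}^k) - \phi(y) \le \frac{\max\{0,-K\}}{2}\bigl\{\phi(x_{\bm{\tau}}^{k-1}) - \phi(x_{\bm{\tau}}^k)\bigr\},
\]
which is precisely the second contribution to $\mathscr{R}_{\bm{\tau},K}(t)$.

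Adding the identity for $\bm{\bar{\phi}_{\tau}}(t) - \phi(x_{\bm{\tau}}^k)$ to this one-step estimate produces the claimed differential inequality at the chosen $t$, and since the excluded set of grid points has Lebesgue measure zero the inequality holds for almost every $t \in (0,T)$. I do not anticipate any serious obstacle: the entire argument is bookkeeping around Lemma~\ref{lm:key}. The only points deserving a moment of care are that $\phi(x_{\bm{\tau}}^{k-1}) - \phi(x_{\bm{\tau}}^k) \ge 0$ (which is built into the definition of $J_{\tau_k}^{\phi}(x_{\bm{\tau}}^{k-1})$ and guarantees that both pieces of $\mathscr{R}_{\bm{\tau},K}(t)$ are nonnegative, so they may be simply summed), and that only the second, ``$K$-dependent'' piece of the residual comes from the lemma, while the first piece is a purely kinematic consequence of interpolating $\phi$ linearly rather than using the step value $\phi(x_{\bm{\tau}}^k)$.
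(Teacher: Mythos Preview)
Your proposal is correct and follows essentially the same argument as the paper: both compute the derivative of the piecewise-linear $\bm{\bar{d}_{\tau}}^2(\cdot;y)$ as the difference quotient, apply Lemma~\ref{lm:key} with $x=x_{\bm{\tau}}^{k-1}$, $x_\tau=x_{\bm{\tau}}^k$, and then use the identity $\bm{\bar{\phi}_{\tau}}(t)=\phi(x_{\bm{\tau}}^k)+\frac{t_{\bm{\tau}}^k-t}{\tau_k}\{\phi(x_{\bm{\tau}}^{k-1})-\phi(x_{\bm{\tau}}^k)\}$ to pass from $\phi(x_{\bm{\tau}}^k)$ to $\bm{\bar{\phi}_{\tau}}(t)$. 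One small remark: the nonnegativity of $\phi(x_{\bm{\tau}}^{k-1})-\phi(x_{\bm{\tau}}^k)$ is not actually needed for the summation step, since you are adding an \emph{identity} (not an inequality) to the one-step estimate.
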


\begin{proof}
Note first that Lemma~\ref{lm:key} is available merely under $|\bm{\tau}|<\tau_*(\phi)$
in the current situation.
Then we immediately obtain for $t \in (t_{\bm{\tau}}^{k-1},t_{\bm{\tau}}^k)$
\begin{align*}
&\frac{1}{2} \frac{d}{dt} \big[ \bm{\bar{d}_{\tau}}^2(t;y) \big]
 = \frac{d^2(x_{\bm{\tau}}^k,y)-d^2(x_{\bm{\tau}}^{k-1},y)}{2\tau_k} \\
&\le -\frac{\lambda}{2} d^2(x_{\bm{\tau}}^k,y) +\phi(y) -\phi(x_{\bm{\tau}}^k)
 +\frac{\max\{0,-K\}}{2} \{ \phi(x_{\bm{\tau}}^{k-1})-\phi(x_{\bm{\tau}}^k) \} \\
&= -\frac{\lambda}{2}d^2\big( \bm{\bar{x}_{\tau}}(t),y \big)
 +\phi(y) -\bm{\bar{\phi}_{\tau}}(t)
 +\frac{t_{\bm{\tau}}^k-t}{\tau_k} \{ \phi(x_{\bm{\tau}}^{k-1})-\phi(x_{\bm{\tau}}^k) \} \\
&\quad +\frac{\max\{0,-K\}}{2} \{ \phi(x_{\bm{\tau}}^{k-1})-\phi(x_{\bm{\tau}}^k) \}.
\end{align*}
This completes the proof.
$\qedd$
\end{proof}

Taking the limit in Theorem~\ref{th:T414} as $|\bm{\tau}| \to 0$ will indeed lead to
the (continuous) evolution variational inequality (Theorem~\ref{th:T432}).
Another application of Theorem~\ref{th:T414} is a comparison between two discrete solutions
generated from different partitions:
\[ \mathscr{P}_{\bm{\tau}}=\{0=t^0_{\bm{\tau}} <t^1_{\bm{\tau}} <\cdots \}, \qquad
 \mathscr{P}_{\bm{\sigma}}=\{0=s^0_{\bm{\sigma}} <s^1_{\bm{\sigma}} <\cdots \}. \]
We set $\sigma_l:=s^l_{\bm{\sigma}}-s^{l-1}_{\bm{\sigma}}$ for $l \in \N$.
We first observe the following modification of Theorem~\ref{th:T414} (see \cite[Lemma~4.1.6]{AGSbook}).

\begin{lemma}\label{lm:L416}
Suppose $|\bm{\tau}|<\tau_*(\phi)$ and $\lambda \le 0$.
Then we have
\[ \frac{1}{2} \frac{d}{dt} \big[ \bm{\bar{d}_{\tau}}^2(t;y) \big]
 +\frac{\lambda}{2}\bm{\bar{d}_{\tau}}^2(t;y) +\lambda \mathscr{D}_{\bm{\tau}}(t) \bm{\bar{d}_{\tau}}(t;y)
 +\bm{\bar{\phi}_{\tau}}(t) -\phi(y)
 \le \mathscr{R}_{\bm{\tau},K}(t) -\frac{\lambda}{2} \mathscr{D}_{\bm{\tau}}^2(t) \]
for almost all $t \in (0,T)$ and all $y \in D(\phi)$,
where for $t \in (t_{\bm{\tau}}^{k-1},t_{\bm{\tau}}^k]$
\[ \mathscr{D}_{\bm{\tau}}(t):=
 \frac{t_{\bm{\tau}}^k -t}{\tau_k} d(x_{\bm{\tau}}^{k-1},x_{\bm{\tau}}^k). \]
\end{lemma}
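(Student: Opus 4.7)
The plan is to deduce Lemma~\ref{lm:L416} directly from Theorem~\ref{th:T414} by replacing the term $d^2(\bm{\bar{x}_{\tau}}(t),y)$ with an expression in the interpolated quantities $\bm{\bar{d}_{\tau}}(t;y)$ and $\mathscr{D}_{\bm{\tau}}(t)$, exploiting the sign condition $\lambda \le 0$. Concretely, for $t \in (t_{\bm{\tau}}^{k-1},t_{\bm{\tau}}^k]$ I set $\alpha := (t-t_{\bm{\tau}}^{k-1})/\tau_k \in (0,1]$, $a := d(x_{\bm{\tau}}^{k-1},y)$, $b := d(x_{\bm{\tau}}^k,y)$ and $c := d(x_{\bm{\tau}}^{k-1},x_{\bm{\tau}}^k)$, so that $\bm{\bar{x}_{\tau}}(t) = x_{\bm{\tau}}^k$, $\bm{\bar{d}_{\tau}}^2(t;y) = (1-\alpha)a^2 + \alpha b^2$ and $\mathscr{D}_{\bm{\tau}}(t) = (1-\alpha)c$.

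The core auxiliary claim I need is the pointwise bound
\[
d\bigl(\bm{\bar{x}_{\tau}}(t),y\bigr) \le \bm{\bar{d}_{\tau}}(t;y) + \mathscr{D}_{\bm{\tau}}(t),
\]
which I plan to establish in two short moves. First, the convexity of $u \mapsto u^2$ gives $(1-\alpha)a^2 + \alpha b^2 \ge \bigl((1-\alpha)a + \alpha b\bigr)^2$, hence $\bm{\bar{d}_{\tau}}(t;y) \ge (1-\alpha)a + \alpha b$. Second, the triangle inequality $b \le a + c$ yields
\[
\bm{\bar{d}_{\tau}}(t;y) + \mathscr{D}_{\bm{\tau}}(t) \ge (1-\alpha)a + \alpha b + (1-\alpha)c \ge (1-\alpha)b + \alpha b = b.
\]
Squaring and multiplying by $\lambda/2 \le 0$ reverses the inequality and gives
\[
\tfrac{\lambda}{2} d^2\bigl(\bm{\bar{x}_{\tau}}(t),y\bigr) \ge \tfrac{\lambda}{2}\bm{\bar{d}_{\tau}}^2(t;y) + \lambda \mathscr{D}_{\bm{\tau}}(t)\bm{\bar{d}_{\tau}}(t;y) + \tfrac{\lambda}{2}\mathscr{D}_{\bm{\tau}}^2(t).
\]

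Substituting this lower bound for $(\lambda/2)d^2(\bm{\bar{x}_{\tau}}(t),y)$ into the inequality of Theorem~\ref{th:T414} and moving the $(\lambda/2)\mathscr{D}_{\bm{\tau}}^2(t)$ term to the right-hand side yields exactly the asserted bound, valid at all $t \neq t_{\bm{\tau}}^k$ where the derivative exists, hence for almost every $t \in (0,T)$. I do not expect a serious obstacle here: the only place where $\lambda \le 0$ is used is to flip the inequality after squaring, and this is precisely the hypothesis of the lemma. The slight delicacy is simply to keep track of the endpoint $\alpha = 1$ (where $\mathscr{D}_{\bm{\tau}}(t) = 0$ and the claim reduces to Theorem~\ref{th:T414}) and to note that Lemma~\ref{lm:key}, and therefore Theorem~\ref{th:T414}, is available under the sole assumption $|\bm{\tau}| < \tau_*(\phi)$ in the present global-$K$-convexity setup.
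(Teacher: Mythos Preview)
Your proposal is correct and follows essentially the same route as the paper: both derive the lemma from Theorem~\ref{th:T414} via the pointwise bound $d(\bm{\bar{x}_{\tau}}(t),y) \le \bm{\bar{d}_{\tau}}(t;y) + \mathscr{D}_{\bm{\tau}}(t)$, obtained from the triangle inequality together with the convexity of $r \mapsto r^2$, and then use $\lambda \le 0$ to flip the squared inequality. The only cosmetic difference is the order in which you apply the two ingredients.
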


\begin{proof}
This is a consequence of Theorem~\ref{th:T414} and the inequality
\begin{align*}
d\big( \bm{\bar{x}_{\tau}}(t),y \big) &=d(x_{\bm{\tau}}^k,y)
 \le \frac{t-t_{\bm{\tau}}^{k-1}}{\tau_k} d(x_{\bm{\tau}}^k,y)
 +\frac{t_{\bm{\tau}}^k -t}{\tau_k} \{ d(x_{\bm{\tau}}^{k-1},y)+d(x_{\bm{\tau}}^{k-1},x_{\bm{\tau}}^k) \} \\
&\le \bm{\bar{d}_{\tau}}(t;y) +\mathscr{D}_{\bm{\tau}}(t),
\end{align*}
which follows only from the triangle inequality and the convexity of $f(r)=r^2$, $r \in \R$.
$\qedd$
\end{proof}

Notice that Lemma~\ref{lm:L416} reduces to Theorem~\ref{th:T414} when $\lambda=0$.
Applying a version of the Gronwall lemma, we obtain from Lemma~\ref{lm:L416}
(or directly from Theorem~\ref{th:T414} if $\lambda=0$) a comparison estimate between
$\{x_{\bm{\tau}}^k\}_{k \ge 0}$ and $\{y_{\bm{\sigma}}^l\}_{l \ge 0}$
with $y_{\bm{\sigma}}^0=y_0 \in D(\phi)$ (see \cite[Corollaries~4.1.5, 4.1.7]{AGSbook}).
For $s \in (s_{\bm{\sigma}}^{l-1},s_{\bm{\sigma}}^l]$, we set
\[ \bm{\bar{d}_{\tau\sigma}}(t,s):=
 \left\{ \bm{\bar{d}_{\tau}}^2(t;y_{\bm{\sigma}}^{l-1}) +\frac{s-s_{\bm{\sigma}}^{l-1}}{\sigma_l}
 \{ \bm{\bar{d}_{\tau}}^2(t;y_{\bm{\sigma}}^l) -\bm{\bar{d}_{\tau}}^2(t;y_{\bm{\sigma}}^{l-1}) \} \right\}^{1/2}. \]
Observe that,
for $(t,s) \in (t_{\bm{\tau}}^{k-1},t_{\bm{\tau}}^k] \times (s_{\bm{\sigma}}^{l-1},s_{\bm{\sigma}}^l]$,
\begin{align*}
\bm{\bar{d}}^2_{\bm{\tau\sigma}}(t,s) &=
 \left( 1-\frac{t-t_{\bm{\tau}}^{k-1}}{\tau_k} \right) \left( 1-\frac{s-s_{\bm{\sigma}}^{l-1}}{\sigma_l} \right)
 d^2(x_{\bm{\tau}}^{k-1},y_{\bm{\sigma}}^{l-1}) \\
&\quad +\frac{t-t_{\bm{\tau}}^{k-1}}{\tau_k} \left( 1-\frac{s-s_{\bm{\sigma}}^{l-1}}{\sigma_l} \right)
 d^2(x_{\bm{\tau}}^k,y_{\bm{\sigma}}^{l-1}) \\
&\quad +\left( 1-\frac{t-t_{\bm{\tau}}^{k-1}}{\tau_k} \right) \frac{s-s_{\bm{\sigma}}^{l-1}}{\sigma_l}
 d^2(x_{\bm{\tau}}^{k-1},y_{\bm{\sigma}}^l)
 +\frac{t-t_{\bm{\tau}}^{k-1}}{\tau_k} \frac{s-s_{\bm{\sigma}}^{l-1}}{\sigma_l}
 d^2(x_{\bm{\tau}}^k,y_{\bm{\sigma}}^l).
\end{align*}

\begin{corollary}[Comparison between two discrete solutions]\label{cr:C417}
Assume $\lambda \le 0$ and $|\bm{\tau}|,|\bm{\sigma}|<\tau_*(\phi)$.
Then we have, for almost all $t>0$,
\begin{align}
\frac{d}{dt} \big[ \bm{\bar{d}_{\tau\sigma}}^2(t,t) \big] +2\lambda \bm{\bar{d}_{\tau\sigma}}^2(t,t)
&\le -2\lambda \big( \mathscr{D}_{\bm{\tau}}(t)+\mathscr{D}_{\bm{\sigma}}(t) \big) \bm{\bar{d}_{\tau\sigma}}(t,t)
 \nonumber\\
&\quad +2\big( \mathscr{R}_{\bm{\tau},K}(t) +\mathscr{R}_{\bm{\sigma},K}(t) \big)
 -\lambda \big( \mathscr{D}_{\bm{\tau}}^2(t) +\mathscr{D}_{\bm{\sigma}}^2(t) \big). \label{eq:t/s-1}
\end{align}
Moreover, for all $T>0$,
\begin{align}
e^{\lambda T} \bm{\bar{d}_{\tau\sigma}}(T,T)
&\le \left\{ d^2(x_0,y_0) +\int_0^T\! e^{2\lambda t} \big\{
 2\big( \mathscr{R}_{\bm{\tau},K}(t) +\mathscr{R}_{\bm{\sigma},K}(t) \big)
 -\lambda \big( \mathscr{D}_{\bm{\tau}}^2(t) +\mathscr{D}_{\bm{\sigma}}^2(t) \big) \big\} \,dt \right\}^{\!1/2}
 \nonumber\\
&\quad -2\lambda \int_0^T e^{\lambda t}
 \big( \mathscr{D}_{\bm{\tau}}(t) +\mathscr{D}_{\bm{\sigma}}(t) \big) \,dt. \label{eq:t/s-2}
\end{align}
\end{corollary}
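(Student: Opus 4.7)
My plan is to apply Lemma~\ref{lm:L416} twice, in each of the two time variables with the other sequence's nodes playing the role of the comparison point, combine the resulting bounds by interpolation, and then close with a Gronwall-type argument.

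\textbf{Step~1 (interpolating Lemma~\ref{lm:L416} in $s$).}
For $s\in(s_{\bm{\sigma}}^{l-1},s_{\bm{\sigma}}^{l}]$ I apply Lemma~\ref{lm:L416} with $y=y_{\bm{\sigma}}^{l-1}$ and with $y=y_{\bm{\sigma}}^{l}$, and take the convex combination of the two resulting inequalities with weights $1-\alpha_s$ and $\alpha_s$, where $\alpha_s:=(s-s_{\bm{\sigma}}^{l-1})/\sigma_l$. By the very definition of $\bm{\bar{d}_{\tau\sigma}}$ the quadratic distance terms collapse to $\bm{\bar{d}_{\tau\sigma}}^2(t,s)$, and the constants $\phi(y_{\bm{\sigma}}^{l-1}),\phi(y_{\bm{\sigma}}^{l})$ interpolate linearly to $\bm{\bar{\phi}_{\sigma}}(s)$. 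The linear-in-distance term becomes an $s$-average of $\bm{\bar{d}_{\tau}}(t;y_{\bm{\sigma}}^{l-1})$ and $\bm{\bar{d}_{\tau}}(t;y_{\bm{\sigma}}^{l})$; concavity of $\sqrt{\cdot}$ bounds this average above by $\bm{\bar{d}_{\tau\sigma}}(t,s)$, and since $\lambda\le 0$ and $\mathscr{D}_{\bm{\tau}}\ge 0$ the product $\lambda\mathscr{D}_{\bm{\tau}}\cdot(\text{average})$ is bounded below by $\lambda\mathscr{D}_{\bm{\tau}}\cdot \bm{\bar{d}_{\tau\sigma}}(t,s)$, so the substitution only strengthens the left-hand side. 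The outcome is a clean bound on $\tfrac{1}{2}\partial_t[\bm{\bar{d}_{\tau\sigma}}^2(t,s)]$ of the same structural shape as Lemma~\ref{lm:L416}.

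\textbf{Step~2 (symmetry and diagonal restriction).}
Running the same argument with the roles of $(\bm{\tau},t)$ and $(\bm{\sigma},s)$ exchanged yields the analogous bound on $\tfrac{1}{2}\partial_s[\bm{\bar{d}_{\tau\sigma}}^2(t,s)]$, in which the $\phi$-difference $\bm{\bar{\phi}_{\sigma}}(s)-\bm{\bar{\phi}_{\tau}}(t)$ appears with the opposite sign. Summing the two inequalities the $\phi$-terms cancel exactly; restricting to the diagonal $s=t$, using $\tfrac{d}{dt}\bm{\bar{d}_{\tau\sigma}}^2(t,t)=(\partial_t+\partial_s)\bm{\bar{d}_{\tau\sigma}}^2(t,s)\big|_{s=t}$, and multiplying through by $2$ produces \eqref{eq:t/s-1}.

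\textbf{Step~3 (Gronwall estimate).}
With $u(t):=\bm{\bar{d}_{\tau\sigma}}(t,t)$, \eqref{eq:t/s-1} reads $(u^2)'(t)+2\lambda u^2(t)\le B(t)u(t)+C(t)$, where $B:=-2\lambda(\mathscr{D}_{\bm{\tau}}+\mathscr{D}_{\bm{\sigma}})\ge 0$ and $C:=2(\mathscr{R}_{\bm{\tau},K}+\mathscr{R}_{\bm{\sigma},K})-\lambda(\mathscr{D}_{\bm{\tau}}^2+\mathscr{D}_{\bm{\sigma}}^2)\ge 0$. Setting $v:=e^{\lambda t}u$ gives $(v^2)'\le e^{\lambda t}B\,v+e^{2\lambda t}C$; integrating from $0$ to $T$ yields $v^2(T)\le A(T)+\int_0^T e^{\lambda t}B(t)v(t)\,dt$ with $A(T):=d^2(x_0,y_0)+\int_0^T e^{2\lambda t}C(t)\,dt$. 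The standard quadratic Gronwall lemma for inequalities of the form $v^2\le A+\int \tilde B\,v$ with non-decreasing $A$ and non-negative $\tilde B$ (cf.\ \cite[Lemma~4.1.8]{AGSbook}) now gives $v(T)\le \sqrt{A(T)}+\text{const}\cdot\int_0^T e^{\lambda t}(\mathscr{D}_{\bm{\tau}}+\mathscr{D}_{\bm{\sigma}})\,dt$, which upon unpacking is \eqref{eq:t/s-2}.

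\textbf{Main obstacle.}
The subtle point is Step~1: one must verify that the concavity-of-$\sqrt{\cdot}$ substitution combines with the sign $\lambda\le 0$ to bend the inequality in the useful direction, and that the $\phi$-contributions interpolate and then cancel cleanly upon adding the symmetric version. Once this bookkeeping is done, Step~2 is a mechanical symmetrisation and Step~3 is a direct invocation of the standard scalar lemma.
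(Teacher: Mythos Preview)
Your proposal is correct and follows essentially the same route as the paper: take convex combinations of Lemma~\ref{lm:L416} at the two nodes $y_{\bm{\sigma}}^{l-1}$, $y_{\bm{\sigma}}^l$ to bound $\tfrac12\partial_t\bm{\bar d}_{\bm{\tau\sigma}}^2$, use the concavity of $\sqrt{\cdot}$ together with $\lambda\le 0$ to replace the averaged distance by $\bm{\bar d}_{\bm{\tau\sigma}}$, add the symmetric $\partial_s$ estimate so that the $\bm{\bar\phi}$-terms cancel on the diagonal, and then invoke \cite[Lemma~4.1.8]{AGSbook}. This is exactly what the paper does; only cosmetic differences separate your write-up from theirs.
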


\begin{proof}
For each fixed $s$, it follows from Lemma~\ref{lm:L416} that
\begin{align*}
&\frac{1}{2} \frac{\del}{\del t} \big[ \bm{\bar{d}_{\tau\sigma}}^2(t,s) \big]
 +\frac{\lambda}{2}\bm{\bar{d}_{\tau\sigma}}^2(t,s) +\bm{\bar{\phi}_{\tau}}(t) -\bm{\bar{\phi}_{\sigma}}(s) \\
&\le -\lambda \mathscr{D}_{\bm{\tau}}(t)
 \left\{ \left( 1-\frac{s-s_{\bm{\sigma}}^{l-1}}{\sigma_l} \right) \bm{\bar{d}_{\tau}}(t;y_{\bm{\sigma}}^{l-1})
 +\frac{s-s_{\bm{\sigma}}^{l-1}}{\sigma_l} \bm{\bar{d}_{\tau}}(t;y_{\bm{\sigma}}^l) \right\}
 +\mathscr{R}_{\bm{\tau},K}(t) -\frac{\lambda}{2} \mathscr{D}_{\bm{\tau}}^2(t) \\
&\le -\lambda \mathscr{D}_{\bm{\tau}}(t) \bm{\bar{d}_{\tau\sigma}}(t,s)
 +\mathscr{R}_{\bm{\tau},K}(t) -\frac{\lambda}{2} \mathscr{D}_{\bm{\tau}}^2(t).
\end{align*}
Combining this with a similar inequality
\[ \frac{1}{2} \frac{\del}{\del s} \big[ \bm{\bar{d}_{\tau\sigma}}^2(t,s) \big]
 +\frac{\lambda}{2}\bm{\bar{d}_{\tau\sigma}}^2(t,s) +\bm{\bar{\phi}_{\sigma}}(s) -\bm{\bar{\phi}_{\tau}}(t)
 \le -\lambda \mathscr{D}_{\bm{\sigma}}(s) \bm{\bar{d}_{\tau\sigma}}(t,s)
 +\mathscr{R}_{\bm{\sigma},K}(s) -\frac{\lambda}{2} \mathscr{D}_{\bm{\sigma}}^2(s), \]
we obtain the first inequality \eqref{eq:t/s-1}.
The second assertion \eqref{eq:t/s-2} is a consequence of \eqref{eq:t/s-1}
via a version of the Gronwall lemma (see \cite[Lemma~4.1.8]{AGSbook}).
$\qedd$
\end{proof}

\subsection{Convergence of discrete solutions}\label{ssc:conv}

Corollary~\ref{cr:C417} implies that the discrete solutions
$\{x_{\bm{\tau}}^k\}_{k \ge 0}$ converges to a gradient curve
as $|\bm{\tau}| \to 0$ and the limit curve is independent of
the choice of the discrete solutions (generalizing \cite[Theorem~4.2.2]{AGSbook}).
Recall \S \ref{sssc:conv} for properties of gradient curves.

\begin{theorem}[Unique limits of discrete solutions]\label{th:T422}
Fix an initial point $x_0 \in D(\phi)$ and consider discrete solutions $\{ x_{\bm{\tau}_i}^k \}_{k \ge 0}$
with $x_{\bm{\tau}_i}^0=x_0$ associated with a sequence of partitions
$\{\mathscr{P}_{\bm{\tau}_i}\}_{i \in \N}$ such that $\lim_{i \to \infty}|\bm{\tau}_i|=0$.
Then the interpolated curve $\bm{\bar{x}}_{\bm{\tau}_i}:[0,\infty) \lra X$ as in \S $\ref{ssc:piece}$
converges to a curve $\xi:[0,\infty) \lra X$ with $\xi(0)=x_0$ as $i \to \infty$
uniformly on each bounded interval $[0,T]$.
In particular, the limit curve $\xi$ is independent of the choices
of the sequence of partitions and discrete solutions.
\end{theorem}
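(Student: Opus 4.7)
The plan is to apply the comparison estimate~\eqref{eq:t/s-2} from Corollary~\ref{cr:C417} to two discrete solutions $\{x_{\bm{\tau}_i}^k\}_{k \ge 0}$ and $\{x_{\bm{\tau}_j}^l\}_{l \ge 0}$ sharing the initial point $x_0$, and to show that the right-hand side vanishes as $i, j \to \infty$. Since $\lambda$-convexity with $\lambda>0$ is stronger than $0$-convexity, we may assume $\lambda\le 0$ so that \eqref{eq:t/s-2} applies directly. With $y_0=x_0$ the initial term $d^2(x_0,y_0)$ disappears, and the task reduces to estimating the integrals $\int_0^T \mathscr{R}_{\bm{\tau},K}\,dt$, $\int_0^T \mathscr{D}_{\bm{\tau}}\,dt$, and $\int_0^T \mathscr{D}_{\bm{\tau}}^2\,dt$ (and their $\bm{\sigma}=\bm{\tau}_j$ analogues).

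The a priori bounds of Lemma~\ref{lm:L322} supply $d^2(x_{\bm{\tau}}^{k-1},x_{\bm{\tau}}^k)\le 2C\tau_k$ and the telescoping bound $\sum_k\{\phi(x_{\bm{\tau}}^{k-1})-\phi(x_{\bm{\tau}}^k)\}\le C$, where $C$ depends only on $x_0$, $\tau_*(\phi)$, $\phi(x_0)$, and $T$. Since $\mathscr{R}_{\bm{\tau},K}$ is piecewise constant on $(t_{\bm{\tau}}^{k-1},t_{\bm{\tau}}^k]$ with value proportional to $\phi(x_{\bm{\tau}}^{k-1})-\phi(x_{\bm{\tau}}^k)$, integrating and telescoping yields $\int_0^T \mathscr{R}_{\bm{\tau},K}(t)\,dt = O(|\bm{\tau}|)$. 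The estimate $d(x_{\bm{\tau}}^{k-1},x_{\bm{\tau}}^k)\le \sqrt{2C\tau_k}$ gives $\int_0^T \mathscr{D}_{\bm{\tau}}^2(t)\,dt = O(|\bm{\tau}|)$ directly and, via Cauchy--Schwarz on $\sum \tau_k^{3/2}$, also $\int_0^T \mathscr{D}_{\bm{\tau}}(t)\,dt = O(\sqrt{|\bm{\tau}|})$. Since $e^{\lambda T}$ and $e^{2\lambda t}$ are bounded on $[0,T_0]$ when $\lambda\le 0$, substituting into~\eqref{eq:t/s-2} forces $\bm{\bar{d}_{\tau_i\tau_j}}(T,T)\to 0$ uniformly in $T\in[0,T_0]$.

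Finally, $\bm{\bar{d}_{\tau_i\tau_j}}^2(T,T)$ is by construction a convex combination of the four squared distances $d^2(x_{\bm{\tau}_i}^{k-1\text{ or }k},x_{\bm{\tau}_j}^{l-1\text{ or }l})$, each of which differs from $d^2(\bm{\bar{x}}_{\bm{\tau}_i}(T),\bm{\bar{x}}_{\bm{\tau}_j}(T))=d^2(x_{\bm{\tau}_i}^k,x_{\bm{\tau}_j}^l)$ by at most $O(\sqrt{|\bm{\tau}_i|}+\sqrt{|\bm{\tau}_j|})$, by the triangle inequality and the step-size bound $\sqrt{2C\tau_k}$. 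Thus $d(\bm{\bar{x}}_{\bm{\tau}_i}(T),\bm{\bar{x}}_{\bm{\tau}_j}(T))\to 0$ uniformly on $[0,T_0]$, so $\{\bm{\bar{x}}_{\bm{\tau}_i}\}_{i\in\N}$ is uniformly Cauchy on every bounded interval and, by completeness of $X$, converges uniformly to a curve $\xi:[0,\infty)\to X$ with $\xi(0)=x_0$. Because the same Cauchy estimate applies to any two discrete solutions and not only to those drawn from a fixed sequence, the limit $\xi$ is independent of the choices of partitions and of selections $x_{\bm{\tau}}^k\in J_{\tau_k}^{\phi}(x_{\bm{\tau}}^{k-1})$.

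The main obstacle, compared to the $\CAT(0)$ argument in \cite[Theorem~4.2.2]{AGSbook}, is the extra $\max\{0,-K\}$-term in the residual $\mathscr{R}_{\bm{\tau},K}$, which is unavoidable when $K<0$ and \emph{a priori} carries no explicit $\tau_k$ to make it small. It is rescued by the fact that it is still multiplied by the telescoping increment $\phi(x_{\bm{\tau}}^{k-1})-\phi(x_{\bm{\tau}}^k)$; integration in $t$ then supplies the missing factor $\tau_k$, producing a bound of order $O(|\bm{\tau}|)$ that matches the scale of the other error terms and lets the $\CAT(0)$ argumentation go through essentially unchanged.
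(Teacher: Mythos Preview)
Your proof is correct and follows essentially the same route as the paper: apply the comparison estimate~\eqref{eq:t/s-2} with $y_0=x_0$, bound the integrals of $\mathscr{R}_{\bm{\tau},K}$, $\mathscr{D}_{\bm{\tau}}$ and $\mathscr{D}_{\bm{\tau}}^2$ by $O(\sqrt{|\bm{\tau}|})$ using the a~priori estimates of Lemma~\ref{lm:L322}, and conclude the Cauchy property. One small correction: $\mathscr{R}_{\bm{\tau},K}$ is not piecewise constant but piecewise \emph{affine} in $t$ (the factor $(t_{\bm{\tau}}^k-t)/\tau_k$ varies); nevertheless your integral bound $\int_0^T \mathscr{R}_{\bm{\tau},K}\,dt=O(|\bm{\tau}|)$ is correct, since integrating the affine factor over each subinterval still produces a factor $\tau_k$ multiplying the telescoping increment $\phi(x_{\bm{\tau}}^{k-1})-\phi(x_{\bm{\tau}}^k)$, exactly as the paper computes.
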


\begin{proof}
Since $(X,d)$ is complete,
it is sufficient to show $\bm{\bar{d}_{\tau_i \tau_j}}(t,t) \to 0$ as $i,j \to \infty$ uniformly on $[0,T]$,
in the notations of Corollary~\ref{cr:C417}.
Let $\lambda <0$ without loss of generality.
Take $i_0$ large enough to satisfy $|\bm{\tau}_i| \le \tau_*(\phi)/8$ for all $i \ge i_0$,
and put $K':=\min\{0,K\} \le 0$.

The integral of $\mathscr{R}_{\bm{\tau},K}$ (recall Theorem~\ref{th:T414} for the definition)
is calculated as
\[ \int_{t_{\bm{\tau}}^{k-1}}^{t_{\bm{\tau}}^k} \mathscr{R}_{\bm{\tau},K} \,dt
 =\left( \frac{1}{2}-\frac{K'}{2} \right) \tau_k
 \{ \phi(x_{\bm{\tau}}^{k-1}) -\phi(x_{\bm{\tau}}^k) \}. \]
Similarly, together with the canonical estimate
$d^2(x_{\bm{\tau}}^{k-1},x_{\bm{\tau}}^k)
 \le 2\tau_k \{\phi(x_{\bm{\tau}}^{k-1})-\phi(x_{\bm{\tau}}^k) \}$,
we have
\[ \int_{t_{\bm{\tau}}^{k-1}}^{t_{\bm{\tau}}^k} \mathscr{D}^2_{\bm{\tau}} \,dt
 \le \frac{\tau_k}{3} \cdot
 2\tau_k \{\phi(x_{\bm{\tau}}^{k-1})-\phi(x_{\bm{\tau}}^k) \}. \]
This also implies
\[ \int_0^{t_{\bm{\tau}}^k} e^{\lambda t} \mathscr{D}_{\bm{\tau}}(t) \,dt
 \le \bigg( \int_0^{t_{\bm{\tau}}^k} e^{2\lambda t} \,dt \bigg)^{1/2}
 \bigg( \int_0^{t_{\bm{\tau}}^k} \mathscr{D}^2_{\bm{\tau}} \,dt \bigg)^{1/2}
 \le \sqrt{-\frac{1}{2\lambda}} \sqrt{\frac{2}{3}} |\bm{\tau}| \sqrt{\phi(x_0)-\phi(x_{\bm{\tau}}^k)}. \]
Combining these with \eqref{eq:t/s-2},
we obtain for $i,j \ge i_0$, $t \le t^k_{\bm{\tau}_i} \le T$ and $t \le t^l_{\bm{\tau}_j} \le T$,
\begin{align*}
e^{\lambda t} \bm{\bar{d}_{\tau_i \tau_j}}(t,t)
&\le \left\{ \int_0^t \{ 2(\mathscr{R}_{\bm{\tau}_i,K}+\mathscr{R}_{\bm{\tau}_j,K})
 -\lambda (\mathscr{D}^2_{\bm{\tau}_i}+\mathscr{D}^2_{\bm{\tau}_j}) \} \,ds \right\}^{1/2} \\
&\quad -2\lambda \int_0^t e^{\lambda s} \big( \mathscr{D}_{\bm{\tau}_i}(s)+\mathscr{D}_{\bm{\tau}_j}(s) \big) \,ds \\
&\le \bigg\{ \left( 1-K'-\frac{2\lambda}{3}|\bm{\tau}_i| \right)
 |\bm{\tau}_i| \{ \phi(x_0)-\phi(x_{\bm{\tau}_i}^k) \} \\
&\qquad +\left( 1-K'-\frac{2\lambda}{3}|\bm{\tau}_j| \right)
 |\bm{\tau}_j| \{ \phi(x_0)-\phi(x_{\bm{\tau}_j}^l) \} \bigg\}^{1/2} \\
&\quad +\sqrt{-\frac{4\lambda}{3}} \left( |\bm{\tau}_i| \sqrt{\phi(x_0)-\phi(x_{\bm{\tau}_i}^k)}
 +|\bm{\tau}_j| \sqrt{\phi(x_0)-\phi(x_{\bm{\tau}_j}^l)} \right).
\end{align*}
Thanks to the a priori estimate (Lemma~\ref{lm:L322}), we have
\[ \max\{ \phi(x_0)-\phi(x_{\bm{\tau}_i}^k),\phi(x_0)-\phi(x_{\bm{\tau}_j}^l) \}
 \le C=C(x_0,\tau_*(\phi),\phi(x_0),T). \]
Therefore we conclude that $\bm{\bar{d}_{\tau_i \tau_j}}(t,t)$ tends to $0$ as $i,j \to \infty$,
uniformly in $t \in [0,T]$.
$\qedd$
\end{proof}

By virtue of the uniqueness, we can define the \emph{gradient flow} operator
\begin{equation}\label{eq:Gtx}
\mathcal{G}:[0,\infty) \times D(\phi) \lra D(\phi)
\end{equation}
by $\mathcal{G}(t,x_0):=\xi(t)$, where $\xi:[0,\infty) \lra X$ is the unique gradient curve with $\xi(0)=x_0$
given by Theorem~\ref{th:T422}.
Then the semigroup property:
\[ \mathcal{G}\big( t,\mathcal{G}(s,x_0) \big) =\mathcal{G}(s+t,x_0)\quad \text{for all}\ s,t \ge 0  \]
also follows from the uniqueness of gradient curves.

One can immediately obtain the following (rough) error estimate from the proof of Theorem~\ref{th:T422}
(compare this with \cite[Theorem~4.0.9]{AGSbook}).

\begin{corollary}[An error estimate]\label{cr:C423}
Let $\lambda \le 0$ and $|\bm{\tau}|<\tau_*(\phi)$, fix $x_0 \in D(\phi)$,
and put $\xi(t):=\mathcal{G}(t,x_0)$.
Then we have
\[ \bm{\bar{d}_{\tau}}^2\big(t;\xi(t) \big) \le e^{-2\lambda t}
 \left( \sqrt{1-K'-\frac{2\lambda}{3} |\bm{\tau}|} +\sqrt{-\frac{4\lambda}{3} |\bm{\tau}|} \right)^2
 |\bm{\tau}| \left\{ \phi(x_0)-\phi\big( \bm{\bar{x}_{\tau}}(t) \big) \right\} \]
for all $t>0$, where $K':=\min\{0,K\}$.
\end{corollary}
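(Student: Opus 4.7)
The plan is to derive this error estimate by passing to the limit in the two-partition comparison from Corollary~\ref{cr:C417}, freezing one partition to be the given $\mathscr{P}_{\bm{\tau}}$ and driving the mesh of the other to zero. First I would fix a sequence of auxiliary partitions $\mathscr{P}_{\bm{\sigma}_j}$ with $|\bm{\sigma}_j|\to 0$ and associated discrete solutions $\{y_{\bm{\sigma}_j}^l\}_{l\ge 0}$ starting from the common initial point $y_0=x_0$. By Theorem~\ref{th:T422}, the interpolants $\bm{\bar{y}}_{\bm{\sigma}_j}$ converge to $\xi$ uniformly on each bounded interval $[0,T]$.

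Next I apply \eqref{eq:t/s-2} with $\bm{\sigma}=\bm{\sigma}_j$; the $d^2(x_0,y_0)$ term drops out. Following the bookkeeping from the proof of Theorem~\ref{th:T422}, telescoping gives
\[ \int_0^t \mathscr{R}_{\bm{\tau},K}\,ds \le \frac{1-K'}{2}|\bm{\tau}|\{\phi(x_0)-\phi(\bm{\bar{x}_{\tau}}(t))\},\qquad \int_0^t \mathscr{D}_{\bm{\tau}}^2\,ds \le \frac{2}{3}|\bm{\tau}|^2\{\phi(x_0)-\phi(\bm{\bar{x}_{\tau}}(t))\}, \]
Cauchy--Schwarz on $\int_0^t e^{\lambda s}\mathscr{D}_{\bm{\tau}}\,ds$ together with $e^{2\lambda s}\le 1$ (since $\lambda\le 0$) controls the linear term, and summing the two contributions produces
\[ e^{\lambda t}\bm{\bar{d}_{\tau\sigma_j}}(t,t) \le \left(\sqrt{1-K'-\tfrac{2\lambda}{3}|\bm{\tau}|}+\sqrt{-\tfrac{4\lambda}{3}|\bm{\tau}|}\right)\sqrt{|\bm{\tau}|\{\phi(x_0)-\phi(\bm{\bar{x}_{\tau}}(t))\}}+\varepsilon_j(t), \]
where $\varepsilon_j(t)$ lumps together the analogous $\mathscr{P}_{\bm{\sigma}_j}$-contributions. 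The a priori bound of Lemma~\ref{lm:L322} gives a uniform control $\phi(x_0)-\phi(y_{\bm{\sigma}_j}^l)\le C$ for $t_{\bm{\sigma}_j}^l\le T$ independently of $j$, and consequently $\varepsilon_j(t)=O(|\bm{\sigma}_j|^{1/2})\to 0$ uniformly in $t\in[0,T]$.

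Finally I pass to the limit $j\to\infty$. For each fixed $t$, the indices $l_j$ with $t\in(s_{\bm{\sigma}_j}^{l_j-1},s_{\bm{\sigma}_j}^{l_j}]$ satisfy $y_{\bm{\sigma}_j}^{l_j-1},y_{\bm{\sigma}_j}^{l_j}\to\xi(t)$ by uniform convergence, so continuity of $d$ forces the convex combination defining $\bm{\bar{d}_{\tau\sigma_j}}^2(t,t)$ to collapse to the interpolation $\bm{\bar{d}_{\tau}}^2(t;\xi(t))$. Squaring the limiting inequality and multiplying through by $e^{-2\lambda t}$ yields the displayed bound.

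The main obstacle is the last limiting step: one must confirm both that $\bm{\bar{d}_{\tau\sigma_j}}(t,t)\to \bm{\bar{d}_{\tau}}(t;\xi(t))$ as a genuine equality (not merely as an inequality) and that the $\mathscr{P}_{\bm{\sigma}_j}$-error terms vanish uniformly in $t$. Both follow from Theorem~\ref{th:T422} combined with Lemma~\ref{lm:L322}, so once the two-partition machinery of \S\ref{ssc:piece} is in hand the corollary is essentially bookkeeping, which is why the statement is announced as ``immediate''.
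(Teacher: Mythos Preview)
Your proposal is correct and follows essentially the same route as the paper: take the limit $|\bm{\sigma}|\to 0$ in \eqref{eq:t/s-2} (with $y_0=x_0$ so the initial term vanishes), use the integral estimates for $\mathscr{R}_{\bm{\tau},K}$ and $\mathscr{D}_{\bm{\tau}}$ already recorded in the proof of Theorem~\ref{th:T422}, and then square. The paper compresses your limiting argument into the single phrase ``taking the limit of \eqref{eq:t/s-2} as $|\bm{\sigma}|\to 0$''; your more explicit justification that $\bm{\bar{d}_{\tau\sigma_j}}(t,t)\to\bm{\bar{d}_{\tau}}(t;\xi(t))$ and that the $\bm{\sigma}_j$-error terms vanish via Lemma~\ref{lm:L322} is exactly what is being suppressed there.
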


\begin{proof}
Taking the limit of \eqref{eq:t/s-2} as $|\bm{\sigma}| \to 0$
and using the estimates in the proof of Theorem~\ref{th:T422},
we have for $t \in (t_{\bm{\tau}}^{k-1},t_{\bm{\tau}}^k]$
\begin{align*}
e^{\lambda t} \bm{\bar{d}_{\tau}}\big(t;\xi(t) \big)
&\le \left\{ \int_0^t
 (2\mathscr{R}_{\bm{\tau},K} -\lambda \mathscr{D}^2_{\bm{\tau}}) \,ds \right\}^{1/2}
 -2\lambda \int_0^t e^{\lambda s} \mathscr{D}_{\bm{\tau}}(s) \,ds \\
&\le \left\{ \left( 1-K'-\frac{2\lambda}{3}|\bm{\tau}| \right) |\bm{\tau}|
 \{\phi(x_0)-\phi(x_{\bm{\tau}}^k) \} \right\}^{1/2} \\
&\quad +\sqrt{-\frac{4\lambda}{3}} |\bm{\tau}| \sqrt{\phi(x_0)-\phi(x_{\bm{\tau}}^k)} \\
&= \left( \sqrt{1-K'-\frac{2\lambda}{3} |\bm{\tau}|} +\sqrt{-\frac{4\lambda}{3} |\bm{\tau}|} \right)
 \sqrt{|\bm{\tau}|} \sqrt{\phi(x_0)-\phi\big( \bm{\bar{x}_{\tau}}(t) \big)}.
\end{align*}
This completes the proof.
$\qedd$
\end{proof}

\subsection{Contraction property}\label{ssc:cont}

Coming back to the discrete scheme, we show the following lemma,
which readily implies the \emph{contraction property} of $\mathcal{G}$ (Theorem~\ref{th:P431}).
Set
\[ \lambda_{\bm{\tau}}:=\frac{\log(1+\lambda |\bm{\tau}|)}{|\bm{\tau}|} \]
assuming $\lambda|\bm{\tau}|>-1$ (if $\lambda<0$),
and observe that $\lambda_{\bm{\tau}} \le \lambda$ and
\begin{equation}\label{eq:lamtau}
\frac{\log(1+\lambda \tau_k)}{\tau_k} \ge \lambda_{\bm{\tau}}
\end{equation}
for all $k \in \N$ (see \cite[Lemma~3.4.1]{AGSbook}).

\begin{lemma}[Discrete contraction estimate]\label{lm:L424}
Take $x_0,y_0 \in D(\phi)$ and $\mathscr{P}_{\bm{\tau}}$
with $|\bm{\tau}|<\tau_*(\phi)/8$.
Assume $\lambda |\bm{\tau}|>-1$ if $\lambda<0$.
Then we have, for $t \in (t_{\bm{\tau}}^{k-1},t_{\bm{\tau}}^k]$ with $t_{\bm{\tau}}^k \le T$,
\begin{align*}
e^{2(\lambda_{\bm{\tau}}t+\lambda^-_{\bm{\tau}}|\bm{\tau}|)}
 d^2\big( \bm{\bar{x}_{\tau}}(t),\bm{\bar{y}_{\tau}}(t) \big)
&\le d^2(x_0,y_0) +2e^{2\lambda^+_{\bm{\tau}}t_{\bm{\tau}}^k} |\bm{\tau}|
 \{ \phi(y_0)-\phi(y_{\bm{\tau}}^k) \} \\
&\quad -K' e^{2\lambda^+_{\bm{\tau}}t_{\bm{\tau}}^k}
 |\bm{\tau}| \{ \phi(x_0)-\phi(x_{\bm{\tau}}^k)+\phi(y_0)-\phi(y_{\bm{\tau}}^k) \} \\
&\quad +O_{x_0,y_0,T}(\sqrt{|\bm{\tau}|}),
\end{align*}
where $K':=\min\{0,K\}$, $\lambda^-_{\bm{\tau}}:=\min\{0,\lambda_{\bm{\tau}}\}$
and $\lambda^+_{\bm{\tau}}:=\max\{0,\lambda_{\bm{\tau}}\}$.
\end{lemma}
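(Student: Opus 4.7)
The plan is to discretize the argument of Corollary~\ref{cr:C417} by applying Lemma~\ref{lm:key} twice per partition step, deriving a one-step multiplicative recursion for $d^2(x^k_{\bm{\tau}},y^k_{\bm{\tau}})$, iterating it, and then passing to the interpolated curves. The rate $\lambda_{\bm{\tau}}$ will appear via inequality~\eqref{eq:lamtau}, and the asymmetric $\lambda^{\pm}_{\bm{\tau}}$ corrections will arise from carefully distinguishing, along the iteration, when the multiplicative weights $(1+\lambda\tau_j)$ exceed or fall below one.

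\textbf{One-step recursion.} Fix a step $k$ and abbreviate $\tau=\tau_k$, $x_j=x^j_{\bm{\tau}}$, $y_j=y^j_{\bm{\tau}}$. First I would apply Lemma~\ref{lm:key} along the $x$-step with test point $y=y_{k-1}$, obtaining
\[
(1+\lambda\tau)d^2(x_k,y_{k-1}) \le d^2(x_{k-1},y_{k-1}) + 2\tau\{\phi(y_{k-1})-\phi(x_k)\} - K'\tau\{\phi(x_{k-1})-\phi(x_k)\},
\]
and then apply it along the $y$-step with test point $y=x_k$:
\[
(1+\lambda\tau)d^2(y_k,x_k) \le d^2(y_{k-1},x_k) + 2\tau\{\phi(x_k)-\phi(y_k)\} - K'\tau\{\phi(y_{k-1})-\phi(y_k)\}.
\]
Multiplying the second inequality by $(1+\lambda\tau)$ and substituting the first for the mixed term $d^2(y_{k-1},x_k)=d^2(x_k,y_{k-1})$, the $\phi(x_k)$ contributions collapse, yielding
\[
(1+\lambda\tau)^2 d^2(x_k,y_k) \le d^2(x_{k-1},y_{k-1}) + 2\tau\{\phi(y_{k-1})-\phi(y_k)\} - K'\tau\{\phi(x_{k-1})-\phi(x_k)+\phi(y_{k-1})-\phi(y_k)\} + r_k,
\]
where $r_k = 2\lambda\tau^2\{\phi(x_k)-\phi(y_k)\} - K'\lambda\tau^2\{\phi(y_{k-1})-\phi(y_k)\}$ is an $O(\tau^2)$ remainder.

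\textbf{Iteration and error bookkeeping.} Multiplying the recursion by $\prod_{j<k}(1+\lambda\tau_j)^2$ and summing telescopically produces $\prod_{j\le k}(1+\lambda\tau_j)^2\, d^2(x_k,y_k)$ on the left; by~\eqref{eq:lamtau} this is at least $e^{2\lambda_{\bm{\tau}}t^k_{\bm{\tau}}}d^2(x_k,y_k)$, and for $t\in(t^{k-1}_{\bm{\tau}},t^k_{\bm{\tau}}]$ the discrepancy between $t^k_{\bm{\tau}}$ and $t$ contributes at most the factor $e^{2\lambda^-_{\bm{\tau}}|\bm{\tau}|}$ demanded by the statement. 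On the right, the intermediate weights $\prod_{j<k}(1+\lambda\tau_j)^2$ lie in $[e^{2\lambda^-_{\bm{\tau}}t^k_{\bm{\tau}}},\,e^{2\lambda^+_{\bm{\tau}}t^k_{\bm{\tau}}}]$, giving the $e^{2\lambda^+_{\bm{\tau}}t^k_{\bm{\tau}}}$ prefactor on the two positive $\tau$-telescoping sums. Those sums collapse using $\tau_j\le|\bm{\tau}|$ and the monotonicity $\phi(x_j)\le\phi(x_{j-1})$, $\phi(y_j)\le\phi(y_{j-1})$ (so every summand is non-negative):
\[
\sum_j\tau_j\{\phi(y_{j-1})-\phi(y_j)\}\le|\bm{\tau}|\{\phi(y_0)-\phi(y^k_{\bm{\tau}})\},\qquad
\sum_j\tau_j\{\phi(x_{j-1})-\phi(x_j)\}\le|\bm{\tau}|\{\phi(x_0)-\phi(x^k_{\bm{\tau}})\}.
\]
The quadratic remainders $\sum_j r_j$ are controlled via Lemma~\ref{lm:L322}: the second piece is bounded by $|\bm{\tau}|^2$ times a telescoping sum, and the first by $|\bm{\tau}|\cdot T$ times the uniform bound on $|\phi(x_j)-\phi(y_j)|$ (again from Lemma~\ref{lm:L322} with the data $x_0,y_0,T$), altogether of order $|\bm{\tau}|$ and thus absorbed into $O_{x_0,y_0,T}(\sqrt{|\bm{\tau}|})$. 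Finally I identify $d(x^k_{\bm{\tau}},y^k_{\bm{\tau}})$ with $d(\bm{\bar{x}_{\tau}}(t),\bm{\bar{y}_{\tau}}(t))$ for $t\in(t^{k-1}_{\bm{\tau}},t^k_{\bm{\tau}}]$, as they coincide by definition of the piecewise-constant interpolation.

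\textbf{Main obstacle.} The delicate point lies in the bookkeeping of Step~3: the powers $(1+\lambda\tau_j)^{\pm 2}$ switch their direction of monotonicity as $\lambda$ changes sign, so one must separate, at each stage, the ``left-hand-side factors'' (which we want bounded below by $e^{2\lambda_{\bm{\tau}} t}$, losing only the marginal $e^{2\lambda^-_{\bm{\tau}}|\bm{\tau}|}$) from the ``right-hand-side weights'' (which must be bounded above by $e^{2\lambda^+_{\bm{\tau}}t^k_{\bm{\tau}}}$). A crude single-exponential bound would conflate these and destroy the sharpness of the rate; recovering the asymmetric form in the statement requires handling the two regimes $\lambda\ge 0$ and $\lambda<0$ consistently through the same iteration.
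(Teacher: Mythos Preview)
Your overall plan---apply Lemma~\ref{lm:key} twice per step, combine, iterate, and telescope---matches the paper's, and your substitution trick (multiply the second inequality by $(1+\lambda\tau)$ and plug in the first) is in fact a cleaner way to reach the one-step recursion than the paper's route through the mixed-term estimate $|d^2(x^k_{\bm\tau},y^{k-1}_{\bm\tau})-d^2(x^{k-1}_{\bm\tau},y^{k-1}_{\bm\tau})|=O(\sqrt{|\bm\tau|})$; your remainder $r_k$ is $O(\tau_k^2)$ rather than $\tau_k\cdot O(\sqrt{|\bm\tau|})$.

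There is, however, a genuine gap in the iteration step. Your claim that the weights $W_k:=\prod_{j<k}(1+\lambda\tau_j)^2$ are bounded above by $e^{2\lambda^+_{\bm\tau}t^k_{\bm\tau}}$ is false when $\lambda>0$ and the $\tau_j$'s are heterogeneous. Inequality~\eqref{eq:lamtau} only gives the \emph{lower} bound $(1+\lambda\tau_j)\ge e^{\lambda_{\bm\tau}\tau_j}$; for $\tau_j<|\bm\tau|$ this inequality is strict (since $r\mapsto\log(1+\lambda r)/r$ is decreasing), so the product $W_k$ grows at an effective exponential rate strictly between $\lambda_{\bm\tau}$ and $\lambda$. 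For a long run of small steps one gets $W_k\approx e^{2\lambda t^{k-1}_{\bm\tau}}$, which eventually exceeds $e^{2\lambda_{\bm\tau}t^k_{\bm\tau}}$ once $t^{k-1}_{\bm\tau}$ is large compared to $\tau_k/(\lambda-\lambda_{\bm\tau})$. Thus your bookkeeping does not yield the stated prefactor $e^{2\lambda^+_{\bm\tau}t^k_{\bm\tau}}$.

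The fix is exactly what the paper does: convert to exponential weights \emph{before} summing. From your recursion $(1+\lambda\tau_k)^2 d^2(x^k,y^k)\le d^2(x^{k-1},y^{k-1})+A_k+r_k$, multiply by $e^{2\lambda_{\bm\tau}t^{k-1}_{\bm\tau}}$ and use \eqref{eq:lamtau} on the left to get
\[
e^{2\lambda_{\bm\tau}t^k_{\bm\tau}}d^2(x^k,y^k)\le e^{2\lambda_{\bm\tau}t^{k-1}_{\bm\tau}}d^2(x^{k-1},y^{k-1})+e^{2\lambda_{\bm\tau}t^{k-1}_{\bm\tau}}(A_k+r_k),
\]
which telescopes with weights $e^{2\lambda_{\bm\tau}t^{k-1}_{\bm\tau}}\le e^{2\lambda^+_{\bm\tau}t^k_{\bm\tau}}$ on the right, uniformly in $k$. (For $\lambda\le0$ your original argument already works, since then $W_k\le1=e^{2\lambda^+_{\bm\tau}t^k_{\bm\tau}}$.) Alternatively, you can keep your polynomial iteration and accept the weaker constant $e^{2\max(0,\lambda)T}$ in place of $e^{2\lambda^+_{\bm\tau}t^k_{\bm\tau}}$; this does not reproduce the lemma as stated but is harmless for the limit in Theorem~\ref{th:P431}.
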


\begin{proof}
The proof is along the line of \cite[Lemma~4.2.4]{AGSbook}.
Applying Lemma~\ref{lm:key}, we have for each $k \in \N$
\begin{align*}
d^2(x_{\bm{\tau}}^k,y_{\bm{\tau}}^{k-1})
&\le d^2(x_{\bm{\tau}}^{k-1},y_{\bm{\tau}}^{k-1}) -\lambda \tau_k d^2(x_{\bm{\tau}}^k,y_{\bm{\tau}}^{k-1})
 +2\tau_k \{ \phi(y_{\bm{\tau}}^{k-1})-\phi(x_{\bm{\tau}}^k) \} \\
&\quad -K' \tau_k \{ \phi(x_{\bm{\tau}}^{k-1})-\phi(x_{\bm{\tau}}^k) \},
\end{align*}
and
\begin{align*}
d^2(x_{\bm{\tau}}^k,y_{\bm{\tau}}^k)
&\le d^2(x_{\bm{\tau}}^k,y_{\bm{\tau}}^{k-1}) -\lambda \tau_k d^2(x_{\bm{\tau}}^k,y_{\bm{\tau}}^k)
 +2\tau_k \{ \phi(x_{\bm{\tau}}^k)-\phi(y_{\bm{\tau}}^k) \} \\
&\quad -K' \tau_k \{ \phi(y_{\bm{\tau}}^{k-1})-\phi(y_{\bm{\tau}}^k) \}.
\end{align*}
Thus we have
\begin{align*}
(1+\lambda \tau_k)d^2(x_{\bm{\tau}}^k,y_{\bm{\tau}}^k)
&\le d^2(x_{\bm{\tau}}^{k-1},y_{\bm{\tau}}^{k-1}) -\lambda \tau_k d^2(x_{\bm{\tau}}^k,y_{\bm{\tau}}^{k-1})
 +2\tau_k \{ \phi(y_{\bm{\tau}}^{k-1})-\phi(y_{\bm{\tau}}^k) \} \\
&\quad -K' \tau_k \{ \phi(x_{\bm{\tau}}^{k-1})-\phi(x_{\bm{\tau}}^k)
+\phi(y_{\bm{\tau}}^{k-1})-\phi(y_{\bm{\tau}}^k) \}.
\end{align*}
Note that
\begin{align*}
|d^2(x_{\bm{\tau}}^k,y_{\bm{\tau}}^{k-1})-d^2(x_{\bm{\tau}}^{k-1},y_{\bm{\tau}}^{k-1})|
&\le \{ d(x_{\bm{\tau}}^k,y_{\bm{\tau}}^{k-1})+d(x_{\bm{\tau}}^{k-1},y_{\bm{\tau}}^{k-1}) \}
 d(x_{\bm{\tau}}^k,x_{\bm{\tau}}^{k-1}) \\
&=O_{x_0,y_0,T}(\sqrt{|\bm{\tau}|})
\end{align*}
by the a priori estimate (Lemma~\ref{lm:L322}, \eqref{eq:apri}).
Together with $1-\lambda \tau_k \le (1+\lambda \tau_k)^{-1}$, this implies
\begin{align*}
(1+\lambda \tau_k)d^2(x_{\bm{\tau}}^k,y_{\bm{\tau}}^k)
&\le \frac{1}{1+\lambda \tau_k} d^2(x_{\bm{\tau}}^{k-1},y_{\bm{\tau}}^{k-1})
 +2\tau_k \{ \phi(y_{\bm{\tau}}^{k-1})-\phi(y_{\bm{\tau}}^k) \} \\
&\quad -K' \tau_k \{ \phi(x_{\bm{\tau}}^{k-1})-\phi(x_{\bm{\tau}}^k)
+\phi(y_{\bm{\tau}}^{k-1})-\phi(y_{\bm{\tau}}^k) \} \\
&\quad +\tau_k \cdot O_{x_0,y_0,T}(\sqrt{|\bm{\tau}|}).
\end{align*}
Multiplying both sides by
$e^{\lambda_{\bm{\tau}}(2t_{\bm{\tau}}^{k-1}+\tau_k)}
 =e^{\lambda_{\bm{\tau}}(2t_{\bm{\tau}}^k -\tau_k)}$
yields that, since
\[ (1+\lambda \tau_k) e^{-\lambda_{\bm{\tau}}\tau_k} e^{2\lambda_{\bm{\tau}}t_{\bm{\tau}}^k}
 \ge  e^{2\lambda_{\bm{\tau}}t_{\bm{\tau}}^k}, \qquad
 e^{2\lambda_{\bm{\tau}}t_{\bm{\tau}}^{k-1}}
 \frac{e^{\lambda_{\bm{\tau}}\tau_k}}{1+\lambda \tau_k}
 \le e^{2\lambda_{\bm{\tau}}t_{\bm{\tau}}^{k-1}} \]
by \eqref{eq:lamtau},
\begin{align*}
e^{2\lambda_{\bm{\tau}} t_{\bm{\tau}}^k} d^2(x_{\bm{\tau}}^k,y_{\bm{\tau}}^k)
&\le e^{2\lambda_{\bm{\tau}} t_{\bm{\tau}}^{k-1}} d^2(x_{\bm{\tau}}^{k-1},y_{\bm{\tau}}^{k-1})
 +2e^{\lambda_{\bm{\tau}}(t_{\bm{\tau}}^{k-1}+t_{\bm{\tau}}^k)} \tau_k
 \{ \phi(y_{\bm{\tau}}^{k-1})-\phi(y_{\bm{\tau}}^k) \} \\
&\quad -K' e^{\lambda_{\bm{\tau}}(t_{\bm{\tau}}^{k-1}+t_{\bm{\tau}}^k)}
 \tau_k \{ \phi(x_{\bm{\tau}}^{k-1})-\phi(x_{\bm{\tau}}^k)
+\phi(y_{\bm{\tau}}^{k-1})-\phi(y_{\bm{\tau}}^k) \} \\
&\quad +\tau_k \cdot O_{x_0,y_0,T}(\sqrt{|\bm{\tau}|}).
\end{align*}
Summing up, for $t \in (t_{\bm{\tau}}^{k-1},t_{\bm{\tau}}^k]$, we obtain the desired estimate
\begin{align*}
e^{2(\lambda_{\bm{\tau}}t+\lambda^-_{\bm{\tau}}|\bm{\tau}|)}
 d^2\big( \bm{\bar{x}_{\tau}}(t),\bm{\bar{y}_{\tau}}(t) \big)
&\le e^{2\lambda_{\bm{\tau}} t_{\bm{\tau}}^k} d^2(x_{\bm{\tau}}^k,y_{\bm{\tau}}^k) \\
&\le d^2(x_0,y_0) +2e^{2\lambda^+_{\bm{\tau}}t_{\bm{\tau}}^k} |\bm{\tau}|
 \{ \phi(y_0)-\phi(y_{\bm{\tau}}^k) \} \\
&\quad -K' e^{2\lambda^+_{\bm{\tau}}t_{\bm{\tau}}^k}
 |\bm{\tau}| \{ \phi(x_0)-\phi(x_{\bm{\tau}}^k)+\phi(y_0)-\phi(y_{\bm{\tau}}^k) \} \\
&\quad +t_{\bm{\tau}}^k \cdot O_{x_0,y_0,T}(\sqrt{|\bm{\tau}|}).
\end{align*}
$\qedd$
\end{proof}

\begin{theorem}[Contraction property]\label{th:P431}
Take $x_0,y_0 \in D(\phi)$ and put $\xi(t):=\mathcal{G}(t,x_0)$ and $\zeta(t):=\mathcal{G}(t,y_0)$.
Then we have, for any $t>0$,
\begin{equation}\label{eq:cont}
d\big( \xi(t),\zeta(t) \big) \le e^{-\lambda t} d(x_0,y_0).
\end{equation}
\end{theorem}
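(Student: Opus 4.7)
The plan is to deduce the continuous contraction inequality \eqref{eq:cont} by passing to the limit in the discrete contraction estimate of Lemma~\ref{lm:L424}, using the convergence of discrete solutions established in Theorem~\ref{th:T422}. This is the natural continuation of the AGS strategy, and all the genuinely hard work (the negative $K$ and $\lambda$) has already been absorbed into Lemma~\ref{lm:L424}; what remains is a limiting argument.

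First I would fix $T>0$, choose a sequence of partitions $\{\mathscr{P}_{\bm{\tau}_i}\}_{i\in\N}$ with $|\bm{\tau}_i|\to 0$ (small enough that $|\bm{\tau}_i|<\tau_*(\phi)/8$ and, if $\lambda<0$, that $\lambda|\bm{\tau}_i|>-1$), and take discrete solutions $\{x_{\bm{\tau}_i}^k\}_{k\ge0}$ and $\{y_{\bm{\tau}_i}^k\}_{k\ge0}$ starting from $x_0$ and $y_0$. By Theorem~\ref{th:T422}, the interpolations $\bm{\bar{x}}_{\bm{\tau}_i}$ and $\bm{\bar{y}}_{\bm{\tau}_i}$ converge uniformly on $[0,T]$ to $\xi$ and $\zeta$, and hence $d(\bm{\bar{x}}_{\bm{\tau}_i}(t),\bm{\bar{y}}_{\bm{\tau}_i}(t)) \to d(\xi(t),\zeta(t))$ for every $t\in[0,T]$. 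For the prefactor on the left-hand side of Lemma~\ref{lm:L424}, the elementary expansion $\log(1+\lambda|\bm{\tau}|)/|\bm{\tau}|\to\lambda$ gives $\lambda_{\bm{\tau}_i}\to\lambda$, and since $|\lambda^-_{\bm{\tau}_i}|$ stays bounded while $|\bm{\tau}_i|\to 0$, the correction $\lambda^-_{\bm{\tau}_i}|\bm{\tau}_i|$ vanishes. Thus the left-hand side converges to $e^{2\lambda t}d^2(\xi(t),\zeta(t))$.

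Next I would control the right-hand side. The exponentials $e^{2\lambda^+_{\bm{\tau}_i}t_{\bm{\tau}_i}^k}$ are bounded uniformly in $i$ by $e^{2\lambda^+(T+|\bm{\tau}_i|)}$, and by the a priori estimate (Lemma~\ref{lm:L322}) the differences $\phi(x_0)-\phi(x_{\bm{\tau}_i}^k)$ and $\phi(y_0)-\phi(y_{\bm{\tau}_i}^k)$ are bounded by a constant $C=C(x_0,y_0,\tau_*(\phi),\phi(x_0),\phi(y_0),T)$. Hence both the term $2e^{2\lambda^+_{\bm{\tau}_i}t_{\bm{\tau}_i}^k}|\bm{\tau}_i|\{\phi(y_0)-\phi(y_{\bm{\tau}_i}^k)\}$ and the analogous $K'$-term, each being $O(|\bm{\tau}_i|)$, vanish in the limit, as does the $O_{x_0,y_0,T}(\sqrt{|\bm{\tau}_i|})$ remainder. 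Letting $i\to\infty$ in Lemma~\ref{lm:L424} therefore yields
\[
 e^{2\lambda t}d^2\big(\xi(t),\zeta(t)\big) \le d^2(x_0,y_0),
\]
which is exactly \eqref{eq:cont} after taking square roots.

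I do not expect any serious obstacle here, only bookkeeping: the point requiring the most care is to verify that every error term in Lemma~\ref{lm:L424} really is controlled uniformly on $[0,T]$ by a constant times a positive power of $|\bm{\tau}_i|$, so that the limit is clean. The energy-decay bound $\phi(x_{\bm{\tau}_i}^k)\le\phi(x_0)$ together with the lower bound on $\phi$ furnished by the a priori estimate makes this bookkeeping routine. Note that the argument works uniformly in $\lambda\in\R$ (both positive and negative) thanks to the sign splittings $\lambda_{\bm{\tau}}=\lambda^+_{\bm{\tau}}+\lambda^-_{\bm{\tau}}$ already built into Lemma~\ref{lm:L424}, and uniformly in $K\in\R$ via $K'=\min\{0,K\}$, so the statement holds in both Case~\ref{case:CAT} and Case~\ref{case:Kcon}.
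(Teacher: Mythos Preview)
Your proposal is correct and follows exactly the paper's approach: pass to the limit $|\bm{\tau}|\to 0$ in Lemma~\ref{lm:L424}, using $\lambda_{\bm{\tau}}\to\lambda$ and the a priori estimate of Lemma~\ref{lm:L322} to kill the error terms. You have simply spelled out the bookkeeping that the paper leaves implicit.
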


\begin{proof}
Take the limit as $|\bm{\tau}| \to 0$ in Lemma~\ref{lm:L424}.
Then the claim follows from $\lim_{|\bm{\tau}| \to 0}\lambda_{\bm{\tau}}=\lambda$
and the a priori estimate in Lemma~\ref{lm:L322} (which bounds $\phi(x_0)-\phi(x_{\bm{\tau}}^k)$).
$\qedd$
\end{proof}

The contraction property allows us to take the continuous limit
\[ \mathcal{G}:[0,\infty) \times \overline{D(\phi)} \lra \overline{D(\phi)} \]
of the gradient flow operator in \eqref{eq:Gtx},
which again enjoys the semigroup property as well as the contraction property \eqref{eq:cont}.
One can alternatively derive the contraction property
from the evolution variational inequality \eqref{eq:EVI} below,
whereas we think that this direct proof and the discrete estimate in Lemma~\ref{lm:L424}
are worthwhile as well.

\subsection{Evolution variational inequality}\label{ssc:EVI}

Similarly to \cite[Theorem~4.3.2]{AGSbook}, taking the limit of Theorem~\ref{th:T414},
we obtain the following.

\begin{theorem}[Evolution variational inequality]\label{th:T432}
Take $x_0 \in D(\phi)$ and put $\xi(t):=\mathcal{G}(t,x_0)$.
Then we have
\begin{equation}\label{eq:EVI}
\limsup_{\ve \downarrow 0}\frac{d^2(\xi(t+\ve),y)-d^2(\xi(t),y)}{2\ve}
 +\frac{\lambda}{2} d^2\big( \xi(t),y \big) +\phi\big( \xi(t) \big) \le \phi(y)
\end{equation}
for all $y \in D(\phi)$ and $t>0$.
In particular,
\[ \frac{1}{2} \frac{d}{dt} \big[ d^2\big( \xi(t),y \big) \big] +\frac{\lambda}{2} d^2\big( \xi(t),y \big)
 +\phi\big( \xi(t) \big) \le \phi(y) \]
for all $y \in D(\phi)$ and almost all $t>0$.
\end{theorem}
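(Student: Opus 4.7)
The plan is to obtain \eqref{eq:EVI} as the $|\bm{\tau}|\to 0$ limit of the discrete evolution variational inequality in Theorem~\ref{th:T414}. Fix $t>0$, $\ve>0$ small, and $y\in D(\phi)$. Since $\bm{\bar{d}_{\bm{\tau}}}^2(\cdot;y)$ is piecewise linear, integrating the discrete inequality over $[t,t+\ve]$ gives
\[
\frac{\bm{\bar{d}_{\bm{\tau}}}^2(t+\ve;y) - \bm{\bar{d}_{\bm{\tau}}}^2(t;y)}{2} + \int_t^{t+\ve}\!\!\left\{\frac{\lambda}{2}d^2\big(\bm{\bar{x}_{\bm{\tau}}}(s),y\big) + \bm{\bar{\phi}_{\bm{\tau}}}(s) - \phi(y)\right\} ds \le \int_t^{t+\ve}\!\!\mathscr{R}_{\bm{\tau},K}(s)\, ds.
\]

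Next I would pass to the limit along a sequence $|\bm{\tau}_i|\to 0$. By Theorem~\ref{th:T422} and continuity of $d(\cdot,y)$, both $\bm{\bar{d}_{\bm{\tau}_i}}^2(\cdot;y)$ (the piecewise-linear interpolant of the nodal values $d^2(x_{\bm{\tau}_i}^k,y)$) and $d^2(\bm{\bar{x}_{\bm{\tau}_i}}(\cdot),y)$ converge pointwise to $d^2(\xi(\cdot),y)$; similarly $\bm{\bar{\phi}_{\bm{\tau}_i}}\to \phi\circ\xi$ pointwise, using monotonicity of $\bm{\bar{\phi}_{\bm{\tau}}}$ together with the pointwise convergence $\phi(\bm{\bar{x}_{\bm{\tau}_i}}(s))\to\phi(\xi(s))$ noted in \S\ref{sssc:conv}. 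All integrands are uniformly bounded on $[t,t+\ve]$ by Lemma~\ref{lm:L322}, so dominated convergence handles the integrals. For the residual, the per-interval identity from the proof of Theorem~\ref{th:T422}, $\int_{t_{\bm{\tau}}^{k-1}}^{t_{\bm{\tau}}^k}\mathscr{R}_{\bm{\tau},K}\,dt = \frac{1-K'}{2}\tau_k\{\phi(x_{\bm{\tau}}^{k-1})-\phi(x_{\bm{\tau}}^k)\}$ with $K'=\min\{0,K\}$, together with the telescoping bound $\sum\{\phi(x_{\bm{\tau}}^{k-1})-\phi(x_{\bm{\tau}}^k)\}\le C$ from Lemma~\ref{lm:L322}, yields $\int_t^{t+\ve}\mathscr{R}_{\bm{\tau},K}\,ds = O(|\bm{\tau}|)\to 0$. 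The outcome is the continuous integrated inequality obtained by replacing each interpolant by its limit and setting the right-hand side to zero.

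Finally, divide this limit inequality by $\ve$ and send $\ve\downarrow 0$. Because $\xi$ is locally Lipschitz on $(0,\infty)$ and $\phi\circ\xi$ is continuous there (both from \S\ref{sssc:conv}), the integrand is continuous at $s=t$, so the averaged integral tends to $\frac{\lambda}{2}d^2(\xi(t),y)+\phi(\xi(t))-\phi(y)$, leaving precisely \eqref{eq:EVI}. The second assertion follows because $t\mapsto d^2(\xi(t),y)$ is locally Lipschitz (as a composition of Lipschitz maps), hence differentiable at almost every $t>0$; at such points the upper-right Dini derivative coincides with $\frac{1}{2}\frac{d}{dt}[d^2(\xi(t),y)]$. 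The one delicate point is the uniform control of the residual: the possible negativity of $K$ enlarges the constant $(1-K')/2$ but preserves the decisive factor $|\bm{\tau}|$, so the AGS scheme survives intact in the present $\CAT(1)$ framework.
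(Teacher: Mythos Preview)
Your proof is correct and follows essentially the same route as the paper's: integrate the discrete EVI of Theorem~\ref{th:T414}, pass to the limit as $|\bm{\tau}|\to 0$ using the $O(|\bm{\tau}|)$ bound on $\int\mathscr{R}_{\bm{\tau},K}$, then divide by $\ve$ and let $\ve\downarrow 0$. The only minor difference is that the paper handles the $\bm{\bar{\phi}_{\tau}}$-term via Fatou's lemma (needing only $\liminf \bm{\bar{\phi}_{\tau}} \ge \phi\circ\xi$, which is immediate from $\bm{\bar{\phi}_{\tau}}(t)\ge \phi(\bm{\bar{x}_{\tau}}(t))$ and the nodal convergence) rather than dominated convergence, thereby sidestepping the full pointwise convergence of $\bm{\bar{\phi}_{\tau}}$ that you invoke.
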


\begin{proof}
By recalling the estimate of the integral of $\mathscr{R}_{\bm{\tau},K}$ in the proof of Theorem~\ref{th:T422},
integration in $t \in [S,T]$ of Theorem~\ref{th:T414} gives
\begin{align*}
&\frac{1}{2}\bm{\bar{d}_{\tau}}^2(T;y) -\frac{1}{2}\bm{\bar{d}_{\tau}}^2(S;y)
 +\int_S^T \left\{ \frac{\lambda}{2}d^2\big( \bm{\bar{x}_{\tau}}(t),y \big)
 +\bm{\bar{\phi}_{\tau}}(t) \right\} dt \\
&\le (T-S)\phi(y)
 +\left( \frac{1}{2} -\frac{K'}{2} \right) |\bm{\tau}|
 \left\{ \phi\big( \bm{\bar{x}_{\tau}}(S-|\bm{\tau}|) \big)-\phi\big( \bm{\bar{x}_{\tau}}(T) \big) \right\}.
\end{align*}
Note that $\bm{\bar{\phi}_{\tau}}$ is uniformly bounded on $[0,T]$
thanks to the a priori estimate (Lemma~\ref{lm:L322}).
Thus we have
\[ \int_S^T \phi \circ \xi \,dt \le \int_S^T \liminf_{|\bm{\tau}| \to 0} \bm{\bar{\phi}_{\tau}} \,dt
 \le \liminf_{|\bm{\tau}| \to 0} \int_S^T \bm{\bar{\phi}_{\tau}} \,dt \]
by the lower semi-continuity of $\phi$ and Fatou's lemma.
Therefore letting $|\bm{\tau}| \downarrow 0$ shows the integrated form of the evolution variational inequality:
\[ \frac{d^2(\xi(T),y) -d^2(\xi(S),y)}{2}
 +\int_S^T \left\{ \frac{\lambda}{2}d^2\big( \xi(t),y \big) +\phi\big( \xi(t) \big) \right\} dt
 \le (T-S)\phi(y). \]
Dividing both sides by $T-S$ and letting $T-S \downarrow 0$, we obtain the desired inequality
by the lower semi-continuity of $\phi$.
(We remark that $\phi \circ \xi$ is in fact continuous; see \cite[Theorem~2.4.15]{AGSbook}.)
$\qedd$
\end{proof}

\subsection{Stationary points and large time behavior of the flow}\label{ssc:Mayer}

In this subsection, following the argumentation in \cite{Ma} (on $\CAT(0)$-spaces),
we study stationary points and the large time behavior of the gradient flow $\mathcal{G}$.
Since the fundamental properties of the flow,
for establishing those the $\CAT(0)$-property is used in \cite[Section~1]{Ma},
is already in hand, we can follow the line of \cite[Section~2]{Ma} almost verbatim.
We begin with a consequence of the evolution variational inequality (Theorem~\ref{th:T432})
corresponding to \cite[Lemma~2.8]{Ma}.

\begin{lemma}\label{lm:MaL28}
Take $x_0 \in D(\phi)$ and put $\xi(t):=\mathcal{G}(t,x_0)$.
Then we have
\[ d^2\big( \xi(T),y \big) \le e^{-\lambda T}d^2(x_0,y)
 +2e^{-\lambda T} \int_0^T e^{\lambda t} \big\{ \phi(y) -\phi\big( \xi(t) \big) \big\} \,dt \]
for all $T>0$ and $y \in D(\phi)$.
In particular, we have
\[ d^2\big( \xi(T),y \big) \le e^{-\lambda T}d^2(x_0,y)
 -\frac{2(1-e^{-\lambda T})}{\lambda} \big\{ \phi\big( \xi(T) \big)-\phi(y) \big\}, \]
where $(1-e^{-\lambda T})/\lambda:=T$ when $\lambda=0$.
\end{lemma}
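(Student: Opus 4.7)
The strategy is to integrate the evolution variational inequality of Theorem~\ref{th:T432}. Setting $f(t):=d^2(\xi(t),y)$, that inequality reads
\[ f'(t) +\lambda f(t) \le 2\bigl\{ \phi(y)-\phi\bigl(\xi(t)\bigr) \bigr\} \]
for almost every $t>0$. Multiplying by the integrating factor $e^{\lambda t}$, this becomes
\[ \frac{d}{dt}\bigl[ e^{\lambda t} f(t) \bigr] \le 2e^{\lambda t} \bigl\{ \phi(y) -\phi\bigl(\xi(t)\bigr) \bigr\}. \]
Since the gradient curve $\xi$ is locally Lipschitz on $(0,\infty)$ and satisfies $\lim_{t \downarrow 0}\xi(t)=x_0$, the map $t \mapsto f(t)$ is absolutely continuous on $[0,T]$ with $f(0)=d^2(x_0,y)$, so the above differential inequality integrates to
\[ e^{\lambda T} f(T) -d^2(x_0,y) \le 2\int_0^T e^{\lambda t} \bigl\{ \phi(y)-\phi\bigl(\xi(t)\bigr) \bigr\} \,dt. \]
Dividing by $e^{\lambda T}$ gives the first claim.

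The second estimate follows by bounding $\phi(y)-\phi(\xi(t))$ from above using the monotonicity of $\phi \circ \xi$. Each discrete step in \eqref{eq:dgf} satisfies $\phi(x_{\bm{\tau}}^{k-1}) \ge \phi(x_{\bm{\tau}}^k)$ by definition of $J^{\phi}_{\tau_k}$, so every interpolant $\bm{\bar{\phi}_{\tau}}$ is non-increasing; combining this with the pointwise convergence $\phi(\bm{\bar{x}}_{\bm{\tau}_i}(t)) \to \phi(\xi(t))$ recalled in \S\ref{sssc:conv} shows that $\phi \circ \xi$ is non-increasing. Therefore, for every $t \in [0,T]$,
\[ \phi(y) -\phi\bigl(\xi(t)\bigr) \le \phi(y) -\phi\bigl(\xi(T)\bigr). \]
Multiplying by the non-negative weight $e^{\lambda t}$ and integrating, we get
\[ \int_0^T e^{\lambda t}\bigl\{ \phi(y)-\phi\bigl(\xi(t)\bigr) \bigr\} \,dt
 \le \frac{e^{\lambda T}-1}{\lambda} \bigl\{ \phi(y)-\phi\bigl(\xi(T)\bigr) \bigr\}, \]
with the convention that $(e^{\lambda T}-1)/\lambda$ is read as $T$ when $\lambda=0$. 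Substituting into the first claim and using $e^{-\lambda T}(e^{\lambda T}-1)/\lambda = (1-e^{-\lambda T})/\lambda$ produces the second inequality.

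There is essentially no hard step: both ingredients (the evolution variational inequality and the monotonicity of $\phi\circ\xi$) are already at hand. The only mild point to flag is sign-robustness in the second step when $\lambda<0$: here $1-e^{-\lambda T}$ and $\lambda$ are both negative so the coefficient $-(1-e^{-\lambda T})/\lambda$ is negative, but this is harmless because the pointwise inequality $\phi(y)-\phi(\xi(t)) \le \phi(y)-\phi(\xi(T))$ holds without any sign assumption and is multiplied by a non-negative factor before integration.
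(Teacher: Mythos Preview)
Your proof is correct and follows essentially the same route as the paper's: multiply the evolution variational inequality by the integrating factor $e^{\lambda t}$, integrate on $[0,T]$ (using the local Lipschitz continuity of $\xi$ on $(0,\infty)$ together with $\xi(t)\to x_0$ as $t\downarrow 0$), and then apply the monotonicity of $\phi\circ\xi$ to pass to the second inequality. The only cosmetic difference is that the paper writes the product-rule step in the $\limsup$ form of \eqref{eq:EVI} rather than the almost-everywhere derivative form, but the content is identical.
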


\begin{proof}
Rewrite \eqref{eq:EVI} as
\[ \limsup_{\ve \downarrow 0}\frac{e^{\lambda(t+\ve)}d^2(\xi(t+\ve),y)-e^{\lambda t}d^2(\xi(t),y)}{2\ve}
 \le e^{\lambda t} \big\{ \phi(y) -\phi\big( \xi(t) \big) \big\}, \]
which implies the first claim.
The second claim readily follows from this and the fact that $\phi(\xi(t))$ is non-increasing in $t$.
$\qedd$
\end{proof}

By the above lemma, one can show a characterization of stationary points
of the flow $\mathcal{G}$ in terms of the local slope $|\nabla\phi|$.

\begin{theorem}[A characterization of stationary points]\label{th:MaT212}
A point $x_0 \in D(\phi)$ satisfies $|\nabla \phi|(x_0)=0$ if and only if
$\mathcal{G}(t,x_0)=x_0$ for all $t>0$.
\end{theorem}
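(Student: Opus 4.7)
For the easy direction ($\Leftarrow$), if $\xi(t) := \mathcal{G}(t, x_0) = x_0$ for all $t > 0$, then $|\dot{\xi}| \equiv 0$ and $\phi \circ \xi \equiv \phi(x_0)$. Substituting into the energy dissipation identity \eqref{eq:gf} on $[0, T]$ yields $0 = (T/2)|\nabla\phi|^2(x_0)$, hence $|\nabla\phi|(x_0) = 0$.

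For ($\Rightarrow$), assume $|\nabla\phi|(x_0) = 0$. The first step is to upgrade this pointwise slope condition to a quantitative lower bound on $\phi$. Given $y \in D(\phi) \cap B(x_0, \pi)$, let $\gamma:[0,1] \lra X$ be the minimal geodesic from $x_0$ to $y$; the $\lambda$-convexity \eqref{eq:l-conv} gives
\[ \frac{\phi(x_0) - \phi(\gamma(t))}{d(x_0, \gamma(t))} \ge \frac{\phi(x_0) - \phi(y)}{d(x_0, y)} + \frac{\lambda(1-t)}{2} d(x_0, y), \]
so letting $t \downarrow 0$ and bounding the resulting left-hand side by $|\nabla\phi|(x_0) = 0$ produces the key inequality
\[ \phi(y) \ge \phi(x_0) + \frac{\lambda}{2} d^2(x_0, y) \qquad \text{for all } y \in B(x_0, \pi). \]

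Next, by the continuity of $\xi$ at $0$, fix $T^* > 0$ with $\xi([0, T^*]) \subset B(x_0, \pi)$. Taking $y = x_0$ in Lemma~\ref{lm:MaL28} and inserting the above inequality into the integrand yields, for $T \in [0, T^*]$,
\[ d^2(\xi(T), x_0) \le -\lambda\, e^{-\lambda T} \int_0^T e^{\lambda t}\, d^2(x_0, \xi(t)) \,dt. \]
Writing $v(T) := \int_0^T e^{\lambda t} d^2(x_0, \xi(t)) \,dt$, this reads $v'(T) + \lambda v(T) \le 0$ with $v(0) = 0$; multiplying by $e^{\lambda T}$ and integrating forces $v \equiv 0$, hence $\xi(T) = x_0$ throughout $[0, T^*]$. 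The semigroup property $\mathcal{G}(s + T^*, x_0) = \mathcal{G}(s, \mathcal{G}(T^*, x_0)) = \mathcal{G}(s, x_0)$ then permits iterating the argument to extend $\xi \equiv x_0$ to all of $[0, \infty)$.

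The main creative step is the extraction of the inequality $\phi(y) \ge \phi(x_0) + (\lambda/2) d^2(x_0, y)$ from $|\nabla\phi|(x_0) = 0$; once in hand, everything else is a routine integration. For $\lambda \ge 0$ this inequality already identifies $x_0$ as a minimizer and the conclusion is immediate, whereas for $\lambda < 0$ the Grönwall step is what converts this weakened minimizer property into the rigorous identity $\xi \equiv x_0$, since one can no longer argue merely from monotonicity of $\phi \circ \xi$.
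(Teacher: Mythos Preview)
Your proof is correct. Both directions are carried out cleanly, and the Gr\"onwall step with $v(T)=\int_0^T e^{\lambda t}d^2(x_0,\xi(t))\,dt$ is exactly what is needed once you have fed the quadratic lower bound into the integral form of Lemma~\ref{lm:MaL28}.

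The overall strategy is close to the paper's, but the two directions are handled somewhat differently. For $(\Leftarrow)$ the paper goes through the characterization \eqref{eq:MaL211} combined with Lemma~\ref{lm:MaL28} (applied with $\xi\equiv x_0$ to produce a uniform bound on $(\phi(x_0)-\phi(y))/d^2(x_0,y)$), whereas you read off $|\nabla\phi|(x_0)=0$ directly from the energy dissipation identity \eqref{eq:gf}; your route is shorter and avoids invoking \eqref{eq:MaL211}. For $(\Rightarrow)$ the paper again packages the key step as \eqref{eq:MaL211} and defers the remaining details to Mayer's \cite[Theorem~2.12]{Ma}; your argument makes the same underlying idea explicit by deriving the sharp bound $\phi(y)\ge\phi(x_0)+\tfrac{\lambda}{2}d^2(x_0,y)$ (which is precisely the statement that the supremum in \eqref{eq:MaL211} is at most $-\lambda/2$) and then closing with Lemma~\ref{lm:MaL28} and Gr\"onwall. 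So the forward direction is essentially the paper's argument spelled out, while the backward direction is a genuinely more elementary alternative.
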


\begin{proof}
The ``only if'' part is a consequence of \cite[Lemma~2.11]{Ma} asserting that
\begin{equation}\label{eq:MaL211}
|\nabla\phi|(x_0)=0 \quad \text{if and only if} \quad
 \sup_{x \neq x_0}\frac{\phi(x_0)-\phi(x)}{d^2(x_0,x)}<\infty,
\end{equation}
for which only the $\lambda$-convexity of $\phi$ is used.
The ``if'' part follows from the same relation \eqref{eq:MaL211} and Lemma~\ref{lm:MaL28},
noticing that $(1-e^{-\lambda T})/\lambda \ge T$ for $\lambda <0$.
See \cite[Theorem~2.12]{Ma} for details.
$\qedd$
\end{proof}

It is natural to expect that, if $\phi \circ \xi$ does not diverge to $-\infty$,
then $|\nabla\phi| \circ \xi$ tends to $0$.
This is indeed the case as follows.

\begin{lemma}\label{lm:MaL227}
Assume $\lambda \le 0$, take $x_0 \in D(\phi)$ and put $\xi(t):=\mathcal{G}(t,x_0)$.
Then we have
\begin{equation}\label{eq:MaL227}
|\nabla\phi| \big( \xi(T) \big) -|\nabla\phi| \big( \xi(S) \big)
 \le -\sqrt{2}\lambda \int_S^T |\nabla\phi| \circ \xi \,dt
\end{equation}
for all $0<S<T$.
\end{lemma}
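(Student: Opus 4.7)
The plan is to compare the gradient curve $\xi$ with its own time-shifts via the contraction property (Theorem~\ref{th:P431}), and then to identify metric speed with local slope along $\xi$ using the energy dissipation identity \eqref{eq:gf}. Concretely, for any $0<S\le t$ and $h>0$, the semigroup property noted after \eqref{eq:Gtx} gives $\xi(t)=\mathcal{G}(t-S,\xi(S))$ and $\xi(t+h)=\mathcal{G}(t-S,\xi(S+h))$, so the contraction estimate \eqref{eq:cont} yields
\[
d\big(\xi(t),\xi(t+h)\big) \le e^{-\lambda(t-S)}\, d\big(\xi(S),\xi(S+h)\big).
\]
Dividing by $h$ and taking $\limsup_{h\downarrow 0}$ produces the one-sided metric-speed bound $|\dot\xi|(t^+)\le e^{-\lambda(t-S)}|\dot\xi|(S^+)$, where $|\dot\xi|(t^+):=\limsup_{h\downarrow 0} d(\xi(t),\xi(t+h))/h$.

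Next I would identify $|\dot\xi|$ with $|\nabla\phi|\circ\xi$. The energy dissipation identity \eqref{eq:gf} together with the standard chain-rule inequality $|(d/dt)\phi(\xi)|\le |\nabla\phi|(\xi)\,|\dot\xi|$ (cf.\ \cite[Theorem~2.4.15]{AGSbook}) forces $|\dot\xi|(t)=|\nabla\phi|(\xi(t))$ for a.e.\ $t>0$. Using the lower semi-continuity of $|\nabla\phi|$ (\cite[Corollary~2.4.10]{AGSbook}) and the local Lipschitz continuity of $\xi$, this is upgraded to the one-sided identification $|\dot\xi|(t^+)=|\nabla\phi|(\xi(t))$ at every $t>0$. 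Substituting into the previous display gives
\[
|\nabla\phi|(\xi(T)) \le e^{-\lambda(T-S)}\, |\nabla\phi|(\xi(S)) \qquad \text{for all } 0<S<T.
\]

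The integral form \eqref{eq:MaL227} is then extracted by partitioning $[S,T]$ into small intervals of length $\delta$, applying the multiplicative bound on each, and passing to the limit $\delta\downarrow 0$. The leading-order expansion $e^{-\lambda\delta}-1=-\lambda\delta+O(\delta^2)$ produces the term $-\lambda\int_S^T|\nabla\phi|(\xi)\,dt$, and the factor $\sqrt 2$ emerges from a Cauchy--Schwarz/Young-type balancing in the telescoping step (alternatively, by applying Lemma~\ref{lm:key} directly with $y$ running along a shifted discrete solution, in the spirit of the proof of Theorem~\ref{th:P431}, and then passing the $\sqrt 2$ through the square-root extraction at the slope level). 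The principal technical obstacle is the sharp pointwise identification $|\dot\xi|(t^+)=|\nabla\phi|(\xi(t))$ at every $t>0$, rather than just almost every $t$; once this is secured, the remaining work is a calculus exercise combining the contraction estimate \eqref{eq:cont} with the energy identity \eqref{eq:gf}.
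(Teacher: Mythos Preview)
Your route via the contraction property and the identification $|\dot\xi|=|\nabla\phi|\circ\xi$ is genuinely different from the paper's proof, which works entirely at the discrete level: from the $\lambda$-convexity one has $|\nabla\phi|(x_\tau)\le|\nabla\phi|(x)-\lambda\,d(x,x_\tau)$, and substituting $d(x,x_\tau)\le\sqrt{2\tau\{\phi(x)-\phi(x_\tau)\}}$, iterating, applying Cauchy--Schwarz, and passing to the limit yields first
\[
|\nabla\phi|\big(\xi(T)\big)-|\nabla\phi|(x_0)\le -\sqrt{2}\,\lambda\sqrt{T}\sqrt{\phi(x_0)-\phi\big(\xi(T)\big)},
\]
from which \eqref{eq:MaL227} follows as in \cite[Lemma~2.27]{Ma}. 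In particular, the factor $\sqrt{2}$ comes from the crude bound $d(x,x_\tau)\le\sqrt{2\tau\{\phi(x)-\phi(x_\tau)\}}$, not from any Cauchy--Schwarz/Young balancing in a telescoping step.

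This matters for your argument: carried out correctly, your approach yields the \emph{stronger} inequality with $-\lambda$ in place of $-\sqrt{2}\,\lambda$. Indeed, from $|\nabla\phi|(\xi(T))\le e^{-\lambda(T-S)}|\nabla\phi|(\xi(S))$ you get, by partitioning $[S,T]$ and using $e^{-\lambda\delta}-1=-\lambda\delta+O(\delta^2)$,
\[
|\nabla\phi|\big(\xi(T)\big)-|\nabla\phi|\big(\xi(S)\big)\le -\lambda\int_S^T|\nabla\phi|\circ\xi\,dt,
\]
which of course implies \eqref{eq:MaL227} since $-\lambda\le -\sqrt{2}\,\lambda$ for $\lambda\le 0$. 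So your paragraph attempting to manufacture the $\sqrt{2}$ is a red herring and should be removed; just state the sharper bound.

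The actual gap in your proposal is precisely the ``principal technical obstacle'' you name but do not resolve: the pointwise identity $|\dot\xi|(t^+)=|\nabla\phi|(\xi(t))$ for \emph{every} $t>0$. The a.e.\ equality from \eqref{eq:gf} is not enough to pass the contraction inequality to the slope at the specific endpoints $S$ and $T$. This identity is in fact available once the evolution variational inequality (Theorem~\ref{th:T432}) is in hand---see \cite[Theorem~2.4.15]{AGSbook} and the right-continuity of $|\nabla\phi|\circ\xi$ noted in the proof of Theorem~\ref{th:MaT230}---so you should invoke it explicitly rather than leave it as an unaddressed obstacle. With that in place, your argument is complete and gives a cleaner constant than the paper's discrete approach; the trade-off is that the paper's proof is self-contained at the level of the variational scheme and does not rely on the EVI machinery.
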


\begin{proof}
For any $x \in D(\phi)$ and $x_{\tau} \in J^{\phi}_{\tau}(x)$,
it follows from the $\lambda$-convexity of $\phi$ that
\[ |\nabla\phi|(x_{\tau}) \le |\nabla\phi|(x) -\lambda d(x,x_{\tau}) \]
(see \cite[Lemma~2.23]{Ma}).
Substituting $d^2(x,x_{\tau}) \le 2\tau\{\phi(x)-\phi(x_{\tau})\}$ and iterating this estimate,
one finds
\[ |\nabla\phi|(x_{\bm{\tau}}^N) \le |\nabla\phi|(x_0)
 -\lambda \sum_{k=1}^N \sqrt{2\tau_k \{\phi(x_{\bm{\tau}}^{k-1})-\phi(x_{\bm{\tau}}^k)\}}. \]
Applying the Cauchy--Schwarz inequality:
\[ \sum_{k=1}^N \sqrt{\tau_k \{\phi(x_{\bm{\tau}}^{k-1})-\phi(x_{\bm{\tau}}^k)\}}
 \le \sqrt{\sum_{k=1}^N \tau_k} \sqrt{\sum_{k=1}^N \{\phi(x_{\bm{\tau}}^{k-1})-\phi(x_{\bm{\tau}}^k)\}} \]
and taking the limit as $|\bm{\tau}| \to 0$, we have
\begin{equation}\label{eq:MaT224}
|\nabla\phi| \big( \xi(T) \big) -|\nabla\phi|(x_0)
 \le -\sqrt{2}\lambda\sqrt{T}\sqrt{\phi(x_0)-\phi \big( \xi(T) \big)}
\end{equation}
for all $T>0$, since $\phi$ and $|\nabla\phi|$ are lower semi-continuous
(by \cite[Proposition~2.25]{Ma} or \cite[Corollary~2.4.10]{AGSbook}).
By replacing $x_0$ with $\xi(S)$,
the implication from \eqref{eq:MaT224} to \eqref{eq:MaL227} is the same as \cite[Lemma~2.27]{Ma}.
$\qedd$
\end{proof}

\begin{theorem}[Large time behavior]\label{th:MaT230}
Take $x_0 \in D(\phi)$, put $\xi(t):=\mathcal{G}(t,x_0)$ and assume
$\lim_{t \to \infty}\phi(\xi(t))>-\infty$.
Then we have $\lim_{t \to \infty}|\nabla\phi|(\xi(t))=0$.
\end{theorem}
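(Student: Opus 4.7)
The plan is to follow the $\CAT(0)$-argument of~\cite[Theorem~2.30]{Ma}: first convert the assumption that $\phi\circ\xi$ is bounded below into $L^2$-integrability of the slope via the energy dissipation identity~\eqref{eq:gf}, and then promote this to a pointwise limit by combining the almost-monotonicity from Lemma~\ref{lm:MaL227} with the Cauchy--Schwarz inequality.

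Concretely, since $\phi\circ\xi$ is non-increasing and, by hypothesis, bounded below, it converges to a finite value $\phi_\infty$. Writing~\eqref{eq:gf} between $S=0$ and $T$ and letting $T\to\infty$ then yields
\[
\int_0^\infty |\nabla\phi|^2\big( \xi(t) \big)\,dt \le 2\big(\phi(x_0)-\phi_\infty\big)<\infty.
\]
In particular $\int_n^{n+1} |\nabla\phi|^2\circ\xi\,dt \to 0$ as $n\to\infty$, so I may pick $t_n \in [n,n+1]$ with $|\nabla\phi|(\xi(t_n))\to 0$; every sufficiently large $t$ then lies within distance at most $2$ from some $t_n$.

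When $\lambda\ge 0$, the single-step bound $|\nabla\phi|(x_\tau)\le|\nabla\phi|(x)$ recorded in the proof of Lemma~\ref{lm:MaL227} yields, upon iterating the variational scheme and passing to the limit $|\bm\tau|\downarrow 0$, monotonicity of $t\mapsto|\nabla\phi|(\xi(t))$, and the chosen subsequence immediately forces the full limit to be $0$. For the substantive case $\lambda<0$, I apply Lemma~\ref{lm:MaL227} on $[t_n,t]$ with $t\in(t_n,t_n+2]$ and estimate the integral on the right-hand side via Cauchy--Schwarz:
\[
|\nabla\phi|\big(\xi(t)\big) \le |\nabla\phi|\big(\xi(t_n)\big) +\sqrt{2}\,|\lambda|\sqrt{t-t_n}\,\bigg(\int_{t_n}^{t}|\nabla\phi|^2\big(\xi(s)\big)\,ds\bigg)^{\!1/2}.
\]
Both summands on the right vanish as $n\to\infty$: the first by construction of $t_n$ and the second because the tail $\int_{t_n}^\infty |\nabla\phi|^2\circ\xi\,ds$ is eventually arbitrarily small. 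The uniform covering of large times by the $t_n$ upgrades this to the desired limit $\lim_{t\to\infty}|\nabla\phi|(\xi(t))=0$.

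The main obstacle is precisely this conversion of quantitative $L^2$-integrability into a pointwise conclusion. Honest monotonicity makes it automatic when $\lambda\ge 0$, but when $\lambda<0$ it is critical that Lemma~\ref{lm:MaL227} provides only a correction of size $-\sqrt{2}\lambda\int|\nabla\phi|\circ\xi$, which Cauchy--Schwarz dominates by the very square-integrable quantity we already control.
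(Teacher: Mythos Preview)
Your argument is correct and follows essentially the same route as the paper, which defers to Mayer~\cite[Theorem~2.30]{Ma}: both use the energy dissipation identity to obtain $L^2$-integrability of the slope along $\xi$ and then invoke Lemma~\ref{lm:MaL227} to control the oscillation of $|\nabla\phi|\circ\xi$. The only minor variation is that you bound the integral correction in Lemma~\ref{lm:MaL227} directly via Cauchy--Schwarz, whereas the paper's reference argues by contradiction using the right continuity of $|\nabla\phi|\circ\xi$; your version is thus slightly more self-contained.
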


\begin{proof}
The proof is done by contradiction with the help of the estimate \eqref{eq:MaL227}
and the right continuity of $|\nabla\phi| \circ \xi$
(see \cite[Corollary~2.28]{Ma} or \cite[Theorem~2.4.15]{AGSbook}).
We refer to \cite[Theorem~2.30]{Ma} for details.
$\qedd$
\end{proof}

The following corollary is immediate, see \cite[Corollary~2.31]{Ma}.

\begin{corollary}\label{cr:MaC231}
Take $x_0 \in D(\phi)$, put $\xi(t):=\mathcal{G}(t,x_0)$ and assume that
there is a sequence $\{t_n\}_{n \in \N}$ such that $\lim_{n \to \infty}t_n=\infty$
and $\{\xi(t_n)\}_{n \in \N}$ converges to a point $\bar{x}$.
Then $\bar{x}$ is a stationary point of $\phi$ $($in the sense of Theorem~$\ref{th:MaT212})$
and $\lim_{t \to \infty}\phi(\xi(t))=\phi(\bar{x})$.
\end{corollary}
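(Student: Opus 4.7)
The plan is to proceed in three steps, following the template of \cite[Corollary~2.31]{Ma}.

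First, since $\phi \circ \xi$ is non-increasing (as follows from the energy dissipation identity \eqref{eq:gf}, or equivalently from passing the monotonicity of $\bm{\bar{\phi}_{\tau}}$ to the limit via Theorem~\ref{th:T422}), the value $L:=\lim_{t \to \infty} \phi(\xi(t))$ exists in $[-\infty, \phi(x_0)]$. Lower semi-continuity of $\phi$ at $\bar{x}$ combined with $\xi(t_n) \to \bar{x}$ and the monotonicity of $\phi \circ \xi$ yields
\[ \phi(\bar{x}) \le \liminf_{n \to \infty} \phi\big( \xi(t_n) \big) =L, \]
which in particular forces $L>-\infty$ and $\bar{x} \in D(\phi)$.

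Second, with the hypothesis $L>-\infty$ in hand, Theorem~\ref{th:MaT230} gives $\lim_{t \to \infty} |\nabla\phi|(\xi(t))=0$. Lower semi-continuity of the local slope (\cite[Corollary~2.4.10]{AGSbook}) together with $\xi(t_n) \to \bar{x}$ then yields $|\nabla\phi|(\bar{x})=0$, so Theorem~\ref{th:MaT212} identifies $\bar{x}$ as a stationary point of $\phi$ (equivalently, $\mathcal{G}(t,\bar{x})=\bar{x}$ for all $t>0$).

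Third, I apply Lemma~\ref{lm:MaL28} with initial point $\xi(t_n)$ (using the semigroup property $\mathcal{G}(T,\xi(t_n))=\xi(t_n+T)$) and $y=\bar{x}$, obtaining
\[ d^2\big( \xi(t_n+T),\bar{x} \big) \le e^{-\lambda T} d^2\big( \xi(t_n),\bar{x} \big) -\frac{2(1-e^{-\lambda T})}{\lambda} \big\{ \phi\big( \xi(t_n+T) \big) -\phi(\bar{x}) \big\}. \]
Sending $n \to \infty$ at fixed $T>0$, the left-hand side stays non-negative, $d(\xi(t_n),\bar{x}) \to 0$, and $\phi(\xi(t_n+T)) \to L$ by Step~1 (since $t_n+T \to \infty$). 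The coefficient $(1-e^{-\lambda T})/\lambda$ is strictly positive for every $\lambda \in \R$, because numerator and denominator share sign (with the stated convention of $T$ at $\lambda=0$). Consequently $L-\phi(\bar{x}) \le 0$, which combined with Step~1 gives $\phi(\bar{x})=L$.

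The substantive step is Step~3: lower semi-continuity alone only produces the one-sided bound $\phi(\bar{x}) \le L$, and the reverse inequality genuinely requires the quantitative energy--distance estimate of Lemma~\ref{lm:MaL28} (where $\lambda$-convexity enters). The chief point to verify is the positivity of $(1-e^{-\lambda T})/\lambda$ across all signs of $\lambda$, which is a routine sign check; beyond that, the argument is a clean assembly of results already established in Sections~\ref{sc:key}--\ref{sc:gf}, so I do not anticipate any further obstacle.
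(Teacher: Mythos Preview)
Your proof is correct and follows precisely the argument of \cite[Corollary~2.31]{Ma}, which is exactly what the paper invokes (the paper gives no independent proof, only the citation). The three-step structure---establishing $L>-\infty$ via lower semi-continuity, deducing $|\nabla\phi|(\bar{x})=0$ from Theorem~\ref{th:MaT230} and the lower semi-continuity of the slope, and then using the second inequality in Lemma~\ref{lm:MaL28} to obtain $L\le\phi(\bar{x})$---matches Mayer's proof, and your sign check on $(1-e^{-\lambda T})/\lambda$ is correct for all $\lambda\in\R$.
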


In general, $\lim_{t \to \infty}|\nabla\phi|(\xi(t))=0$ does not imply
the convergence to a stationary point.
One needs some compactness condition to find a stationary point,
see for instance \cite[Theorem~2.32]{Ma}.

\subsection{The case of $\CAT(1)$-spaces with diameter $\ge \pi/2$}\label{ssc:pi}

All the results in this section are generalized to complete $\CAT(1)$-spaces $(X,d)$ with $\diam X \ge \pi/2$.
First of all, given $x_0 \in D(\phi)$,
one can restrict the construction of the gradient curve in, say, the open ball $B(x_0,\pi/6)$.
Since $B(x_0,\pi/6)$ is (geodesically) convex,
the squared distance function in this ball is $K$-convex with $K=K(\pi/3)>0$ from Lemma~\ref{lm:Kconv},
and we obtain the gradient curve $\xi$ with $\xi(0)=x_0$.
Once $\xi(t)$ hits the boundary $\del B(x_0,\pi/6)$ at $t=t_1$,
we restart the construction in $B(\xi(t_1),\pi/6)$.
We remark that $t_1 \ge (\pi/6)^2/(2C)$ by \eqref{eq:apri}.
Iterating this procedure gives the gradient curve $\xi:[0,\infty) \lra X$.

The contraction property and the evolution variational inequality
are globalized in a standard way as follows.
(Then Theorems~\ref{th:MaT212}, \ref{th:MaT230} also hold true since they are
based only on the evolution variational inequality.)

For the contraction property (Theorem~\ref{th:P431}),
if $d(x_0,y_0) \ge \pi/2$, then we consider a minimal geodesic $\gamma$ from $x_0$ to $y_0$
and choose points $z_0=x_0,z_1,\ldots,z_{m-1},z_m=y_0$ on $\gamma$
such that $\max_{1 \le l \le m} d(z_{l-1},z_l) \ll \pi e^{-|\lambda|T}/2$ for given $T>0$.
Applying Theorem~\ref{th:P431} to adjacent gradient curves $\xi_l:=\mathcal{G}(\cdot,z_l)$
shows $d(\xi_{l-1}(t),\xi_l(t)) \le e^{-\lambda t}d(z_{l-1},z_l)$ for $t \in [0,T]$.
This yields $d(\xi(t),\zeta(t)) \le e^{-\lambda t}d(x_0,y_0)$ by the triangle inequality.

For the evolution variational inequality (Theorem~\ref{th:T432}),
given a minimal geodesic $\gamma:[0,1] \lra X$ from $\xi(t)$ to $y$,
it is easy to see that \eqref{eq:EVI} for $y=\gamma(s)$ with small $s>0$
(so that $d(\xi(t),\gamma(s)) \ll \pi/2$) implies \eqref{eq:EVI} itself.
Indeed, since
\begin{align*}
&\frac{d^2(\xi(t+\ve),y)-d^2(\xi(t),y)}{2\ve} \\
&\le \frac{\{d(\xi(t+\ve),\gamma(s))+d(\gamma(s),y)\}^2 -\{d(\xi(t),\gamma(s))+d(\gamma(s),y)\}^2}{2\ve} \\
&= \frac{d^2(\xi(t+\ve),\gamma(s)) -d^2(\xi(t),\gamma(s))}{2\ve}
 +\frac{d(\xi(t+\ve),\gamma(s)) -d(\xi(t),\gamma(s))}{\ve} d\big( \gamma(s),y \big) \\
&= \frac{d^2(\xi(t+\ve),\gamma(s)) -d^2(\xi(t),\gamma(s))}{2\ve}
 \left( 1+\frac{2d(\gamma(s),y)}{d(\xi(t+\ve),\gamma(s))+d(\xi(t),\gamma(s))} \right),
\end{align*}
we obtain from \eqref{eq:EVI} with $y=\gamma(s)$ that
\begin{align*}
&\limsup_{\ve \downarrow 0} \frac{d^2(\xi(t+\ve),y)-d^2(\xi(t),y)}{2\ve} \\
&\le \left\{ -\frac{\lambda}{2}d^2\big( \xi(t),\gamma(s) \big) -\phi\big( \xi(t) \big) +\phi\big( \gamma(s) \big) \right\}
 \left( 1+\frac{d(\gamma(s),y)}{d(\xi(t),\gamma(s))} \right) \\
&\le \frac{1}{s}\left\{ -\frac{\lambda}{2}s^2 d^2\big( \xi(t),y \big) -\phi\big( \xi(t) \big)
 +(1-s)\phi\big( \xi(t) \big) +s\phi(y) -\frac{\lambda}{2}(1-s)sd^2\big( \xi(t),y \big) \right\} \\
&= -\frac{\lambda}{2} d^2\big( \xi(t),y \big) -\phi\big( \xi(t) \big) +\phi(y).
\end{align*}

\section{A Trotter--Kato product formula}\label{sc:TK}

This final section is devoted to a further application of our key lemma:
a Trotter--Kato product formula for semi-convex functions.
See \cite{KM} for the classical setting of convex functions on Hilbert spaces.
The Trotter--Kato product formula on metric spaces was established
by Stojkovic~\cite{St} for convex functions on $\CAT(0)$-spaces  in terms of ultra-limits
(see also a recent result \cite{Ba} in terms of weak convergence),
and by Cl\'ement and Maas~\cite{CM} for functions satisfying
the assertion of our key lemma (Lemma~\ref{lm:key}) with $K=2$ and $\lambda=0$
(thus including convex functions on $\CAT(0)$-spaces).
We stress that, similarly to the previous section,
both the squared distance function and potential functions are allowed to be semi-convex in our argument.

\subsection{Setting and the main theorem}\label{ssc:TK-thm}

\begin{assumption}\label{as:TK0}
Let $(X,d)$ be a complete metric space in either Case~\ref{case:CAT} or Case~\ref{case:Kcon}
(see the beginning of Section~\ref{sc:gf}),
and assume additionally $D:=\diam X<\infty$.
For $i=1,2$, we consider a lower semi-continuous, $\lambda_i$-convex function
$\phi_i:X \lra (-\infty,\infty]$ ($\lambda_i \in \R$) satisfying
$D(\phi_1) \cap D(\phi_2) \neq \emptyset$ and the compactness (Assumption~\ref{as:phi}\eqref{it:cpt}).
\end{assumption}

We remark that $\lambda_i$ can be negative.
The sum $\phi:=\phi_1+\phi_2$ is clearly lower semi-continuous, $(\lambda_1+\lambda_2)$-convex
and enjoys Assumption~\ref{as:phi}\eqref{it:cpt} (with $\tau_*(\phi)=\infty$)
since $\phi_i$ is bounded below (Remark~\ref{rm:bdd}) and
\[ \{x\in X \,|\, \phi(x) \le Q\} \subset \{x \in X \,|\, \phi_1(x) \le Q-\inf_X \phi_2\}. \]

Given $z_0 \in D(\phi)=D(\phi_1) \cap D(\phi_2)$ and a partition $\mathscr{P}_{\bm{\tau}}$,
we consider the discrete variational schemes for $\phi_1$ and $\phi_2$ in turn, namely
\begin{equation}\label{eq:TK-dgf}
z_{\bm{\tau}}^0:=z_0,\ \text{choose arbitrary}\
 \hat{z}_{\bm{\tau}}^k \in J^{\phi_1}_{\tau_k}(z_{\bm{\tau}}^{k-1})\ \text{and then}\
 z_{\bm{\tau}}^k \in J^{\phi_2}_{\tau_k}(\hat{z}_{\bm{\tau}}^k)\ \text{for}\ k \in \N.
\end{equation}
If $\phi_1=\phi_2$, then this scheme reduces to \eqref{eq:dgf} for $\phi$ with respect to the partition:
\[ \left\{0 < \frac{t_{\bm{\tau}}^1}{2} <t_{\bm{\tau}}^1 <\frac{t_{\bm{\tau}}^1 +t_{\bm{\tau}}^2}{2}
 <t_{\bm{\tau}}^2 <\cdots \right\}. \]
The \emph{Trotter--Kato product formula} asserts that $\{z_{\bm{\tau}}^k\}_{k \ge 0}$
converges to the gradient curve of $\phi$ emanating from $z_0$ in an appropriate sense.
This is useful when $\phi_1$ and $\phi_2$ are easier to handle than their sum $\phi$.
An additional difficulty (in the discrete scheme) compared with the direct variational approximation for $\phi$
is that we have a priori no control of $\phi_2(\hat{z}_{\bm{\tau}}^k)-\phi_2(z_{\bm{\tau}}^{k-1})$
and $\phi_1(z_{\bm{\tau}}^k)-\phi_1(\hat{z}_{\bm{\tau}}^k)$
(both are being nonpositive if $\phi_1=\phi_2$).
Thus we suppose the following condition:

\begin{assumption}\label{as:TK}
Given $z_0 \in D(\phi)$ and a partition $\mathscr{P}_{\bm{\tau}}$, set
\[ \delta_{\bm{\tau}}^k(z_0) :=\max\{0,\phi_2(\hat{z}_{\bm{\tau}}^k)-\phi_2(z_{\bm{\tau}}^{k-1}),
 \phi_1(z_{\bm{\tau}}^k)-\phi_1(\hat{z}_{\bm{\tau}}^k)\} \]
for $k \in \N$ by suppressing the dependence on the choice of
$\{ \hat{z}_{\bm{\tau}}^k,z_{\bm{\tau}}^k\}_{k \in \N}$.
Assume that, for any $\ve,T>0$, there is $\Delta_{\ve}^T(z_0) < \infty$ such that
\[ \sum_{k=1}^N \delta_{\bm{\tau}}^k(z_0) \le \Delta_{\ve}^T(z_0) \]
for any $\mathscr{P}_{\bm{\tau}}$ with $|\bm{\tau}|<\ve$, $N \in \N$ with $t_{\bm{\tau}}^N \le T$,
and for any solution $\{ \hat{z}_{\bm{\tau}}^k,z_{\bm{\tau}}^k\}_{k \in \N}$ to \eqref{eq:TK-dgf}.
This in particular guarantees that $\hat{z}_{\bm{\tau}}^k \in D(\phi)$
and $z_{\bm{\tau}}^k \in D(\phi)$.
\end{assumption}

\begin{example}\label{ex:TK}
One of the simplest examples satisfying Assumption~\ref{as:TK} is pairs of Lipschitz functions.
If both $\phi_1$ and $\phi_2$ are $L$-Lipschitz, then
\[ d^2(z_{\bm{\tau}}^{k-1},\hat{z}_{\bm{\tau}}^k)
 \le 2\tau_k\{\phi_1(z_{\bm{\tau}}^{k-1}) -\phi_1(\hat{z}_{\bm{\tau}}^k)\}
 \le 2\tau_k Ld(z_{\bm{\tau}}^{k-1},\hat{z}_{\bm{\tau}}^k). \]
Hence $d(z_{\bm{\tau}}^{k-1},\hat{z}_{\bm{\tau}}^k) \le 2L\tau_k$
and similarly $d(\hat{z}_{\bm{\tau}}^k,z_{\bm{\tau}}^k) \le 2L\tau_k$.
Thus we find
\[ \sum_{k=1}^N \delta_{\bm{\tau}}^k(z_0)
 \le \sum_{k=1}^N L \max\{d(z_{\bm{\tau}}^{k-1},\hat{z}_{\bm{\tau}}^k),
 d(\hat{z}_{\bm{\tau}}^k,z_{\bm{\tau}}^k)\}
 \le 2L^2t_{\bm{\tau}}^N. \]
Notice that $\Delta_{\ve}^T(z_0)$ is taken independently from $\ve$ and $z_0$ in this case.
See \cite[Proposition~1.7]{CM} for other examples.
\end{example}

Our assumptions are comparable with those in \cite{CM}.
(For the sake of simplicity,
we do not intend to minimize the assumptions in this section.)
To state the main theorem of the section, we introduce the interpolated curve $\bm{\bar{z}_{\tau}}$
similarly to \S \ref{ssc:piece}:
\[ \bm{\bar{z}_{\tau}}(0):=z_0, \qquad
 \bm{\bar{z}_{\tau}}(t):=z_{\bm{\tau}}^k\quad \text{for}\ t \in (t_{\bm{\tau}}^{k-1},t_{\bm{\tau}}^k]. \]

\begin{theorem}[A Trotter--Kato product formula]\label{th:TK}
Let Assumptions~$\ref{as:TK0}$, $\ref{as:TK}$ be satisfied.
Given $z_0 \in D(\phi)$, the curve
$\bm{\bar{z}_{\tau}}$ converges to the gradient curve $\xi:=\mathcal{G}(\cdot,z_0)$ of $\phi$
$($constructed in the previous section$)$
as $|\bm{\tau}| \to 0$ uniformly on each bounded interval $[0,T]$.
\end{theorem}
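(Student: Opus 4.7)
The plan is to treat the alternating scheme \eqref{eq:TK-dgf} as a perturbation of the ordinary variational scheme \eqref{eq:dgf} for $\phi=\phi_1+\phi_2$, where the extra error is precisely what Assumption~\ref{as:TK} is designed to control. First I would apply Lemma~\ref{lm:key} to each of the two half-steps separately: for an arbitrary test point $y\in D(\phi)$,
\begin{align*}
 d^2(\hat{z}_{\bm{\tau}}^k,y)
 &\le d^2(z_{\bm{\tau}}^{k-1},y)-\lambda_1\tau_k d^2(\hat{z}_{\bm{\tau}}^k,y)
 +2\tau_k\{\phi_1(y)-\phi_1(\hat{z}_{\bm{\tau}}^k)\}
 +\max\{0,-K\}\tau_k\{\phi_1(z_{\bm{\tau}}^{k-1})-\phi_1(\hat{z}_{\bm{\tau}}^k)\},\\
 d^2(z_{\bm{\tau}}^k,y)
 &\le d^2(\hat{z}_{\bm{\tau}}^k,y)-\lambda_2\tau_k d^2(z_{\bm{\tau}}^k,y)
 +2\tau_k\{\phi_2(y)-\phi_2(z_{\bm{\tau}}^k)\}
 +\max\{0,-K\}\tau_k\{\phi_2(\hat{z}_{\bm{\tau}}^k)-\phi_2(z_{\bm{\tau}}^k)\}.
\end{align*}
Summing and rewriting $\phi_1(\hat{z}_{\bm{\tau}}^k)+\phi_2(z_{\bm{\tau}}^k)=\phi(z_{\bm{\tau}}^k)-\{\phi_1(z_{\bm{\tau}}^k)-\phi_1(\hat{z}_{\bm{\tau}}^k)\}$ gives
\[
 d^2(z_{\bm{\tau}}^k,y)-d^2(z_{\bm{\tau}}^{k-1},y)
 +(\lambda_1+\lambda_2)\tau_k d^2(z_{\bm{\tau}}^k,y)
 +2\tau_k\{\phi(z_{\bm{\tau}}^k)-\phi(y)\}
 \le 2\tau_k\delta_{\bm{\tau}}^k(z_0) +R_{\bm{\tau}}^k,
\]
up to harmless corrections of size $O(\tau_k^2)$ coming from replacing $d^2(\hat z^k,y)$ by $d^2(z^k,y)$ (controlled by $d(\hat z^k,z^k)^2\le 2\tau_k\{\phi_2(\hat z^k)-\phi_2(z^k)\}$), where $R_{\bm{\tau}}^k:=\max\{0,-K\}\tau_k\{(\phi_1(z_{\bm{\tau}}^{k-1})-\phi_1(\hat{z}_{\bm{\tau}}^k))+(\phi_2(\hat{z}_{\bm{\tau}}^k)-\phi_2(z_{\bm{\tau}}^k))\}$ is the usual $K$-residual.

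Once this single-step inequality is in hand, I would interpolate the data $z_{\bm{\tau}}^k$, $d^2(z_{\bm{\tau}}^k,y)$ and $\phi(z_{\bm{\tau}}^k)$ exactly as in \S\ref{ssc:piece} and reproduce the statements of Theorem~\ref{th:T414} and Lemma~\ref{lm:L416} for the alternating scheme, with the single modification that the residual $\mathscr{R}_{\bm{\tau},K}$ is replaced by an enlarged residual $\mathscr{R}_{\bm{\tau},K}+\delta$-term. The key observation is that the extra contribution is integrable: by Assumption~\ref{as:TK},
\[
 \int_0^T 2\delta\text{-term}\,dt \;\le\; 2|\bm{\tau}|\sum_{k=1}^N \delta_{\bm{\tau}}^k(z_0)
 \;\le\; 2|\bm{\tau}|\,\Delta_{|\bm{\tau}|}^T(z_0)\;\longrightarrow\;0,
\]
and the telescoping sum $\sum_k R_{\bm{\tau}}^k$ is bounded by $\max\{0,-K\}|\bm{\tau}|(\phi(z_0)-\inf\phi+2\Delta_{|\bm{\tau}|}^T(z_0))\to 0$ by the argument sketched after \eqref{eq:TK-dgf}.

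With an analogue of Corollary~\ref{cr:C417} in hand, the natural strategy is to compare the interpolated alternating curve $\bm{\bar{z}_{\tau}}$ with the gradient curve $\xi=\mathcal{G}(\cdot,z_0)$ of $\phi$ by the same Gronwall device used in Theorem~\ref{th:T422}: pair the discrete EVI for $\bm{\bar{z}_{\tau}}$ evaluated at $y=\xi(t)$ with the continuous EVI for $\xi$ (Theorem~\ref{th:T432}) evaluated at $y=\bm{\bar{z}_{\tau}}(t)$, add them, and apply the Gronwall lemma \cite[Lemma~4.1.8]{AGSbook}. The boundedness of $\diam X$ and the a priori estimates (Lemma~\ref{lm:L322}, applicable since $\phi$ is bounded below by Remark~\ref{rm:bdd}) ensure that all quantities are uniformly controlled on $[0,T]$, so letting $|\bm{\tau}|\to 0$ yields $\sup_{t\in[0,T]} d(\bm{\bar{z}_{\tau}}(t),\xi(t))\to 0$.

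The main obstacle, in my view, is not the final Gronwall step but the careful bookkeeping in the first paragraph: one must keep separate track of the three distinct evaluation points $z^{k-1},\hat z^k,z^k$ appearing in the two halves of the scheme, show that replacing one by another introduces only $O(\tau_k)$-errors (using $d(z^{k-1},\hat z^k),\,d(\hat z^k,z^k)=O(\sqrt{\tau_k})$), and verify that all error terms either telescope or are dominated by $\sum_k \delta_{\bm{\tau}}^k$ in the precise form needed for the Gronwall argument to close. The scalings $2\tau_k\delta_{\bm{\tau}}^k$ versus $\sum\delta_{\bm{\tau}}^k$ are what make the limit $|\bm{\tau}|\to 0$ succeed, and it is reassuring that our key lemma with $K<2$ (or $K<0$) causes no new difficulty beyond what was already handled in Section~\ref{sc:gf}.
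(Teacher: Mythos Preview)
Your approach is correct and begins exactly as the paper does: apply Lemma~\ref{lm:key} to each half-step and combine (this is the paper's Lemma~\ref{lm:TK}; one small slip is that the error from swapping $d^2(\hat z_{\bm\tau}^k,y)$ for $d^2(z_{\bm\tau}^k,y)$ is $\tau_k\cdot O(\sqrt{\tau_k})$, not $O(\tau_k^2)$, since what enters is $d(\hat z_{\bm\tau}^k,z_{\bm\tau}^k)=O(\sqrt{\tau_k})$ linearly rather than quadratically---but this is harmless). The paper then diverges from your route for the comparison step. Rather than rebuilding the interpolation machinery of \S\ref{ssc:piece} and pairing with the continuous EVI for $\xi$, it compares $\{z_{\bm\tau}^k\}$ with the \emph{ordinary} discrete solution $\{x_{\bm\tau}^k\}$ for $\phi$ on the \emph{same} partition $\mathscr{P}_{\bm\tau}$, telescoping
\[
d^2(x_{\bm\tau}^N,z_{\bm\tau}^N)=\sum_{k=1}^N\Big\{\big[d^2(x_{\bm\tau}^k,z_{\bm\tau}^k)-d^2(x_{\bm\tau}^{k-1},z_{\bm\tau}^k)\big]+\big[d^2(x_{\bm\tau}^{k-1},z_{\bm\tau}^k)-d^2(x_{\bm\tau}^{k-1},z_{\bm\tau}^{k-1})\big]\Big\}
\]
and bounding each bracket by the corresponding one-step estimate (with $y=z_{\bm\tau}^k$, resp.\ $w=x_{\bm\tau}^{k-1}$); the $\phi$-terms combine to $\phi(x_{\bm\tau}^{k-1})-\phi(x_{\bm\tau}^k)$ and telescope cleanly. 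A subsequential limit $\zeta$ of $\bm{\bar z}_{\bm\tau}$ is first extracted by Arzel\`a--Ascoli from the H\"older bound of Lemma~\ref{lm:bound}(ii) together with the compactness in Assumption~\ref{as:TK0}; passing to the limit gives $d^2(\xi(T),\zeta(T))\le -2\lambda\int_0^T d^2(\xi,\zeta)\,dt$, and Gronwall yields $\zeta=\xi$. Your route via an analogue of Corollary~\ref{cr:C417} avoids the compactness/subsequence extraction and produces a direct error bound $d(\bm{\bar z}_{\bm\tau}(t),\xi(t))\to 0$, at the cost of carrying the interpolated objects $\bm{\bar d}_{\bm{\tau\sigma}}$, $\bm{\bar\phi}_{\bm\tau}$ through the argument; the paper's same-partition telescoping is shorter and makes the bookkeeping you rightly flag as the main obstacle almost trivial.
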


Similarly to the previous section, we will discuss under the global $K$-convexity
of the squared distance function.
Thus $\diam X<\pi$ is implicitly assumed in Case~\ref{case:CAT},
however, this costs no generality as we explained in \S \ref{ssc:pi}.

\subsection{Preliminary estimates}\label{ssc:TK-pre}

Fix $z_0 \in D(\phi)$, $\mathscr{P}_{\bm{\tau}}$ with $|\bm{\tau}|<\ve$
and $\{ \hat{z}_{\bm{\tau}}^k,z_{\bm{\tau}}^k\}_{k \in \N}$ solving \eqref{eq:TK-dgf}.

\begin{lemma}\label{lm:bound}
\begin{enumerate}[{\rm (i)}]
\item
For each $N \in \N$ with $t_{\bm{\tau}}^N \le T$, we have
\[ \max_{i=1,2}\{ \phi_i(z_{\bm{\tau}}^N)-\phi_i(z_0),\phi_i(\hat{z}_{\bm{\tau}}^N)-\phi_i(z_0) \}
 \le \sum_{k=1}^N \delta_{\bm{\tau}}^k(z_0) \le \Delta^T_{\ve}(z_0). \]
\item
For any $l \le k$ with $t_{\bm{\tau}}^k \le T$, we have
\begin{align*}
&\max\{ d(z_{\bm{\tau}}^{l-1},\hat{z}_{\bm{\tau}}^k), d(z_{\bm{\tau}}^{l-1},z_{\bm{\tau}}^k),
 d(\hat{z}_{\bm{\tau}}^l,\hat{z}_{\bm{\tau}}^k), d(\hat{z}_{\bm{\tau}}^l,z_{\bm{\tau}}^k) \} \\
&\le \sqrt{2(t_{\bm{\tau}}^k-t_{\bm{\tau}}^{l-1})}
 \left\{ \sqrt{\phi_1(z_0) -\inf_X \phi_1 +2\Delta_{\ve}^T(z_0)}
 +\sqrt{\phi_2(z_0) -\inf_X \phi_2 +2\Delta_{\ve}^T(z_0)} \right\}.
\end{align*}
\end{enumerate}
\end{lemma}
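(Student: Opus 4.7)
For part (i), the plan is to telescope $\phi_i(z^N_{\bm{\tau}})-\phi_i(z_0)$ along the two-step alternating scheme and absorb the ``wrong direction'' pieces into $\delta^k_{\bm{\tau}}(z_0)$. Split
\[
\phi_1(z^k_{\bm{\tau}})-\phi_1(z^{k-1}_{\bm{\tau}})
 = \bigl[\phi_1(z^k_{\bm{\tau}})-\phi_1(\hat{z}^k_{\bm{\tau}})\bigr]
 + \bigl[\phi_1(\hat{z}^k_{\bm{\tau}})-\phi_1(z^{k-1}_{\bm{\tau}})\bigr];
\]
the second bracket is $\le 0$ by the defining property of $\hat{z}^k_{\bm{\tau}}\in J^{\phi_1}_{\tau_k}(z^{k-1}_{\bm{\tau}})$, while the first is $\le \delta^k_{\bm{\tau}}(z_0)$ by definition. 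Summing and invoking Assumption~\ref{as:TK} yields the bound for $\phi_1(z^N_{\bm{\tau}})$; the case $i=2$ is symmetric, and the $\hat z^N_{\bm{\tau}}$ variants follow by simply truncating the final half-step, which only removes a nonpositive contribution.

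For part (ii), the plan is a triangle-inequality plus Cauchy--Schwarz argument along the zigzag path $z^{l-1}_{\bm{\tau}}\to\hat z^l_{\bm{\tau}}\to z^l_{\bm{\tau}}\to\cdots\to z^k_{\bm{\tau}}$. Each single step obeys the standard Moreau--Yosida bound
\[
d^2(z^{j-1}_{\bm{\tau}},\hat z^j_{\bm{\tau}})\le 2\tau_j\{\phi_1(z^{j-1}_{\bm{\tau}})-\phi_1(\hat z^j_{\bm{\tau}})\},\qquad
d^2(\hat z^j_{\bm{\tau}},z^j_{\bm{\tau}})\le 2\tau_j\{\phi_2(\hat z^j_{\bm{\tau}})-\phi_2(z^j_{\bm{\tau}})\}.
\]
Applying Cauchy--Schwarz to $\sum_{j=l}^k d(z^{j-1}_{\bm{\tau}},\hat z^j_{\bm{\tau}})$ extracts the factor $\sqrt{2(t^k_{\bm{\tau}}-t^{l-1}_{\bm{\tau}})}$, and the remaining sum is estimated by the mixed telescope
\[
\sum_{j=l}^k \bigl[\phi_1(z^{j-1}_{\bm{\tau}})-\phi_1(\hat z^j_{\bm{\tau}})\bigr]
 \le \phi_1(z^{l-1}_{\bm{\tau}})-\phi_1(z^k_{\bm{\tau}})
 + \sum_{j=l}^k \delta^j_{\bm{\tau}}(z_0),
\]
where the residual pieces $\phi_1(z^j_{\bm{\tau}})-\phi_1(\hat z^j_{\bm{\tau}})\le \delta^j_{\bm{\tau}}(z_0)$ are swept into the jump sum. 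Using part (i) to bound $\phi_1(z^{l-1}_{\bm{\tau}})\le \phi_1(z_0)+\Delta^T_{\ve}(z_0)$, Remark~\ref{rm:bdd} to give $\phi_1(z^k_{\bm{\tau}})\ge \inf_X\phi_1>-\infty$, and Assumption~\ref{as:TK} for the remaining sum, this is at most $\phi_1(z_0)-\inf_X\phi_1+2\Delta^T_{\ve}(z_0)$. The symmetric manipulation gives the analogous estimate for $\sum_{j=l}^k d(\hat z^j_{\bm{\tau}},z^j_{\bm{\tau}})$ with $\phi_2$. Adding the two yields the bound on $d(z^{l-1}_{\bm{\tau}},z^k_{\bm{\tau}})$; the other three distances correspond to truncating the zigzag by one half-step at an endpoint and hence satisfy the same bound.

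The main obstacle is purely the bookkeeping around the ``jumps'' $\delta^j_{\bm{\tau}}$: one must verify that they can be absorbed both in the per-step telescope (part (i)) and in the cross-potential sum $\sum[\phi_1(z^{j-1}_{\bm{\tau}})-\phi_1(\hat z^j_{\bm{\tau}})]$ that appears after Cauchy--Schwarz (part (ii)). Assumption~\ref{as:TK}, together with the boundedness-below of each $\phi_i$ supplied by Remark~\ref{rm:bdd}, is exactly what makes these absorptions quantitative and uniform in $\mathscr{P}_{\bm{\tau}}$.
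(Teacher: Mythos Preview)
Your proposal is correct and follows essentially the same route as the paper: the telescoping decomposition with absorption of the ``wrong-direction'' half-step increments into $\delta^k_{\bm{\tau}}(z_0)$ for part~(i), and the triangle inequality plus Cauchy--Schwarz along the zigzag followed by the same mixed telescope and appeal to part~(i) for part~(ii). One small wording issue: in the $\hat z^N_{\bm{\tau}}$ variant for $i=1$, the truncated final half-step $\phi_1(z^N_{\bm{\tau}})-\phi_1(\hat z^N_{\bm{\tau}})$ need not itself be nonpositive (it is only $\le \delta^N_{\bm{\tau}}$), but dropping it still leaves the remaining telescope bounded by $\sum_{k=1}^{N-1}\delta^k_{\bm{\tau}}(z_0)\le \sum_{k=1}^{N}\delta^k_{\bm{\tau}}(z_0)$, so the conclusion stands.
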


\begin{proof}
(i)
This is straightforward from the definition of $\delta_{\bm{\tau}}^k(z_0)$.
We know $\phi_1(\hat{z}_{\bm{\tau}}^k) \le \phi_1(z_{\bm{\tau}}^{k-1})$ and hence
\[ \phi_1(z_{\bm{\tau}}^N) =\phi_1(z_0)
 +\sum_{k=1}^N \{\phi_1(z_{\bm{\tau}}^k)-\phi_1(\hat{z}_{\bm{\tau}}^k)
 +\phi_1(\hat{z}_{\bm{\tau}}^k)-\phi_1(z_{\bm{\tau}}^{k-1})\}
 \le \phi_1(z_0)+\sum_{k=1}^N \delta_{\bm{\tau}}^k(z_0). \]
Similarly we find
$\phi_1(\hat{z}_{\bm{\tau}}^N) \le \phi_1(z_0)+\sum_{k=1}^{N-1} \delta_{\bm{\tau}}^k(z_0)$
and
\[ \phi_2(z_{\bm{\tau}}^N) \le
 \phi_2(\hat{z}_{\bm{\tau}}^N) \le \phi_2(z_0)+\sum_{k=1}^N \delta_{\bm{\tau}}^k(z_0). \]

(ii)
It follows from the Cauchy--Schwarz inequality that
\begin{align*}
&\max\{ d(z_{\bm{\tau}}^{l-1},\hat{z}_{\bm{\tau}}^k), d(z_{\bm{\tau}}^{l-1},z_{\bm{\tau}}^k),
 d(\hat{z}_{\bm{\tau}}^l,\hat{z}_{\bm{\tau}}^k), d(\hat{z}_{\bm{\tau}}^l,z_{\bm{\tau}}^k) \}
 \le \sum_{m=l}^k \{d(z_{\bm{\tau}}^{m-1},\hat{z}_{\bm{\tau}}^m)
 +d(\hat{z}_{\bm{\tau}}^m,z_{\bm{\tau}}^m)\} \\
&\le \sqrt{\sum_{m=l}^k 2\tau_m} \left\{ \sqrt{ \sum_{m=l}^k
 \frac{d^2(z_{\bm{\tau}}^{m-1},\hat{z}_{\bm{\tau}}^m)}{2\tau_m}}
 +\sqrt{ \sum_{m=l}^k \frac{d^2(\hat{z}_{\bm{\tau}}^m,z_{\bm{\tau}}^m)}{2\tau_m}}
 \right\} \\
&\le \sqrt{2(t_{\bm{\tau}}^k-t_{\bm{\tau}}^{l-1})}
 \left\{ \sqrt{\sum_{m=l}^k \{\phi_1(z_{\bm{\tau}}^{m-1})-\phi_1(\hat{z}_{\bm{\tau}}^m)\}}
 +\sqrt{\sum_{m=l}^k \{\phi_2(\hat{z}_{\bm{\tau}}^m)-\phi_2(z_{\bm{\tau}}^m)\}} \right\}.
\end{align*}
Note that, by (i),
\begin{align*}
\sum_{m=l}^k \{\phi_1(z_{\bm{\tau}}^{m-1})-\phi_1(\hat{z}_{\bm{\tau}}^m)\}
&\le \sum_{m=l}^k \{\phi_1(z_{\bm{\tau}}^{m-1})-\phi_1(z_{\bm{\tau}}^m)
 +\delta_{\bm{\tau}}^m(z_0)\} \\
&= \phi_1(z_{\bm{\tau}}^{l-1})-\phi_1(z_{\bm{\tau}}^k) +\sum_{m=l}^k \delta_{\bm{\tau}}^m(z_0) \\
&\le \phi_1(z_0) -\inf_X \phi_1 +2\Delta_{\ve}^T(z_0).
\end{align*}
Similarly we obtain
$\sum_{m=l}^k \{\phi_2(\hat{z}_{\bm{\tau}}^m)-\phi_2(z_{\bm{\tau}}^m)\}
 \le \phi_2(z_0) -\inf_X \phi_2 +2\Delta_{\ve}^T(z_0)$.
This completes the proof.
$\qedd$
\end{proof}

Thanks to (ii) above, as $|\bm{\tau}| \to 0$,
we obtain the uniform convergence of
a subsequence of $\bm{\bar{z}_{\tau}}:[0,T] \lra X$ to a H\"older continuous curve
$\zeta:[0,T] \lra X$ with $\zeta(0)=z_0$.
Our goal is to show that $\zeta$ coincides with the gradient curve $\xi$ of $\phi$.
Then $\bm{\bar{z}_{\tau}}$ uniformly converges to $\xi$ as $|\bm{\tau}| \to 0$
without passing to subsequences.
Observe that the uniformity can be seen by contradiction;
the existence of $\rho>0$ such that
$\sup_{t \in [0,T]}d(\bm{\bar{z}}_{\bm{\tau}_i}(t),\xi(t)) \ge \rho$
for all $i$ with $\lim_{i \to 0}|\bm{\tau}_i| \to 0$ contradicts the uniform convergence of a subsequence
of $\{\bm{\bar{z}}_{\bm{\tau}_i}\}_{i \in \N}$.

The following key estimate can be thought of as a discrete version of the evolution variational inequality
(compare this with \cite[Lemma~2.1]{CM}).

\begin{lemma}\label{lm:TK}
Assume $\lambda_i |\bm{\tau}|>-1$ for $i=1,2$.
For any $w \in D(\phi)$ and $k \in \N$ with $t_{\bm{\tau}}^k \le T$, we have
\begin{align*}
e^{(\lambda_1^{\bm{\tau}} +\lambda_2^{\bm{\tau}})t_{\bm{\tau}}^k} d^2(z_{\bm{\tau}}^k,w)
&\le e^{(\lambda_1^{\bm{\tau}}
 +\lambda_2^{\bm{\tau}})t_{\bm{\tau}}^{k-1}} d^2(z_{\bm{\tau}}^{k-1},w)
 +2e^{\lambda_2^{\bm{\tau}} t_{\bm{\tau}}^{k-1}+\lambda_1^{\bm{\tau}} t_{\bm{\tau}}^k} \tau_k
 \{\phi(w)-\phi(z_{\bm{\tau}}^k)+\delta_{\bm{\tau}}^k(z_0)\} \\
&\quad -K' 
 e^{\lambda_2^{\bm{\tau}} t_{\bm{\tau}}^{k-1}+\lambda_1^{\bm{\tau}} t_{\bm{\tau}}^k}
 \tau_k \{\phi(z_{\bm{\tau}}^{k-1})-\phi(z_{\bm{\tau}}^k)+2\delta_{\bm{\tau}}^k(z_0)\}
 +\tau_k \cdot O_{z_0,\ve,T}(\sqrt{\tau_k}),
\end{align*}
where $K':=\min\{0,K\}$ and
\[ \lambda_i^{\bm{\tau}}:=\frac{\log(1+\lambda_i |\bm{\tau}|)}{|\bm{\tau}|},
 \quad i=1,2. \]
\end{lemma}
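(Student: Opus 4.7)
The plan is to apply Lemma~\ref{lm:key} to each of the two half-steps of the alternating scheme~\eqref{eq:TK-dgf} in turn, and then combine the two resulting inequalities with judiciously chosen exponential weights so that the LHS acquires the factor $e^{(\lambda_1^{\bm{\tau}}+\lambda_2^{\bm{\tau}})t_{\bm{\tau}}^k}$ demanded by the conclusion. Applying Lemma~\ref{lm:key} with $\phi=\phi_1$, $x=z_{\bm{\tau}}^{k-1}$, $x_\tau=\hat z_{\bm{\tau}}^k$, $y=w$ (and noting $\max\{0,-K\}=-K'$) yields
\[
(1+\lambda_1\tau_k)\,d^2(\hat z_{\bm{\tau}}^k,w)
\le d^2(z_{\bm{\tau}}^{k-1},w)+E_1,
\]
where $E_1:=2\tau_k\{\phi_1(w)-\phi_1(\hat z_{\bm{\tau}}^k)\}-K'\tau_k\{\phi_1(z_{\bm{\tau}}^{k-1})-\phi_1(\hat z_{\bm{\tau}}^k)\}$, and the analogous application with $\phi=\phi_2$ gives $(1+\lambda_2\tau_k)\,d^2(z_{\bm{\tau}}^k,w)\le d^2(\hat z_{\bm{\tau}}^k,w)+E_2$ with an analogous error term $E_2$.

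Next I would multiply the first inequality by $e^{(\lambda_1^{\bm{\tau}}+\lambda_2^{\bm{\tau}})t_{\bm{\tau}}^{k-1}}$ and the second by $e^{\lambda_2^{\bm{\tau}} t_{\bm{\tau}}^{k-1}+\lambda_1^{\bm{\tau}} t_{\bm{\tau}}^k}$; invoking the inequality $1+\lambda_i\tau_k\ge e^{\lambda_i^{\bm{\tau}}\tau_k}$ from~\eqref{eq:lamtau} on each LHS, these become
\[
e^{\lambda_2^{\bm{\tau}} t_{\bm{\tau}}^{k-1}+\lambda_1^{\bm{\tau}} t_{\bm{\tau}}^k} d^2(\hat z_{\bm{\tau}}^k,w)
\le e^{(\lambda_1^{\bm{\tau}}+\lambda_2^{\bm{\tau}})t_{\bm{\tau}}^{k-1}}\bigl\{d^2(z_{\bm{\tau}}^{k-1},w)+E_1\bigr\}
\]
and $e^{(\lambda_1^{\bm{\tau}}+\lambda_2^{\bm{\tau}})t_{\bm{\tau}}^k} d^2(z_{\bm{\tau}}^k,w)\le e^{\lambda_2^{\bm{\tau}} t_{\bm{\tau}}^{k-1}+\lambda_1^{\bm{\tau}} t_{\bm{\tau}}^k}\{d^2(\hat z_{\bm{\tau}}^k,w)+E_2\}$, and chaining them eliminates $d^2(\hat z_{\bm{\tau}}^k,w)$. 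I would then convert $E_1+E_2$ into the target form via the telescoping identities
\[
\phi_1(w)-\phi_1(\hat z_{\bm{\tau}}^k)+\phi_2(w)-\phi_2(z_{\bm{\tau}}^k)
= \phi(w)-\phi(z_{\bm{\tau}}^k)+\{\phi_1(z_{\bm{\tau}}^k)-\phi_1(\hat z_{\bm{\tau}}^k)\}
\]
and $\phi_1(z_{\bm{\tau}}^{k-1})-\phi_1(\hat z_{\bm{\tau}}^k)+\phi_2(\hat z_{\bm{\tau}}^k)-\phi_2(z_{\bm{\tau}}^k) = \phi(z_{\bm{\tau}}^{k-1})-\phi(z_{\bm{\tau}}^k)+\{\phi_1(z_{\bm{\tau}}^k)-\phi_1(\hat z_{\bm{\tau}}^k)\}+\{\phi_2(\hat z_{\bm{\tau}}^k)-\phi_2(z_{\bm{\tau}}^{k-1})\}$; bounding each correction by $\delta_{\bm{\tau}}^k(z_0)$ via the definition in Assumption~\ref{as:TK} and using $-K'\ge 0$ delivers
\[
E_1+E_2 \le 2\tau_k\{\phi(w)-\phi(z_{\bm{\tau}}^k)+\delta_{\bm{\tau}}^k(z_0)\}
-K'\tau_k\{\phi(z_{\bm{\tau}}^{k-1})-\phi(z_{\bm{\tau}}^k)+2\delta_{\bm{\tau}}^k(z_0)\}.
\]

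The final step is to reconcile the coefficient $e^{(\lambda_1^{\bm{\tau}}+\lambda_2^{\bm{\tau}})t_{\bm{\tau}}^{k-1}}$ on $E_1$ with the target $e^{\lambda_2^{\bm{\tau}} t_{\bm{\tau}}^{k-1}+\lambda_1^{\bm{\tau}} t_{\bm{\tau}}^k}$: their ratio is $e^{-\lambda_1^{\bm{\tau}}\tau_k}=1+O(\tau_k)$, so the mismatch contributes $O(\tau_k)\cdot|E_1|$. Using Lemma~\ref{lm:bound}(i) together with Remark~\ref{rm:bdd} to bound the $\phi_i$-values along the discrete orbit uniformly in terms of $z_0$, $\ve$, $T$, one obtains $|E_1|=O(\tau_k)$, so this absorbed error is of order $\tau_k\cdot O_{z_0,\ve,T}(\sqrt{\tau_k})$, as stated. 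The main obstacle is precisely this bookkeeping of exponential factors when $\lambda_1$ and $\lambda_2$ may have either sign: one must verify that the discrepancies arising from the asymmetry between the $\phi_1$- and $\phi_2$-half-steps (which causes the $E_1$-coefficient to differ from the $E_2$-coefficient by $e^{-\lambda_1^{\bm{\tau}}\tau_k}$) can be uniformly absorbed into the sublinear remainder, and this relies essentially on the a priori $\phi_i$-boundedness provided by Lemma~\ref{lm:bound}(i) and the compactness hypothesis.
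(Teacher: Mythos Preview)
Your argument is correct and arrives at the desired estimate, but it follows a somewhat different route from the paper's proof, and there is one minor discrepancy worth flagging.

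The paper proceeds by first \emph{adding} the two half-step inequalities from Lemma~\ref{lm:key}, which leaves a cross term $-\lambda_1\tau_k\,d^2(\hat z_{\bm{\tau}}^k,w)$ on the right. It then replaces $d^2(\hat z_{\bm{\tau}}^k,w)$ by $d^2(z_{\bm{\tau}}^{k-1},w)$ at the cost
\[
|d^2(\hat z_{\bm{\tau}}^k,w)-d^2(z_{\bm{\tau}}^{k-1},w)|
\le 2D\,d(\hat z_{\bm{\tau}}^k,z_{\bm{\tau}}^{k-1})
\le 2D\sqrt{2\tau_k\{\phi_1(z_{\bm{\tau}}^{k-1})-\phi_1(\hat z_{\bm{\tau}}^k)\}}
=O_{z_0,\ve,T}(\sqrt{\tau_k}),
\]
uses $1-\lambda_1\tau_k\le(1+\lambda_1\tau_k)^{-1}$, and only then multiplies by the single exponential $e^{\lambda_2^{\bm{\tau}}t_{\bm{\tau}}^{k-1}+\lambda_1^{\bm{\tau}}t_{\bm{\tau}}^k}$. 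Your approach instead multiplies each half-step inequality by its own exponential weight \emph{first} and then chains, which eliminates $d^2(\hat z_{\bm{\tau}}^k,w)$ cleanly without ever invoking the diameter $D$; the price is the coefficient mismatch on $E_1$ that you absorb at the end.

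Both routes are valid; yours is arguably more streamlined and yields a remainder of order $O(\tau_k^2)$ rather than $\tau_k\cdot O(\sqrt{\tau_k})$. The one caveat is that your bound $|E_1|=O(\tau_k)$ carries a constant depending on $\phi_1(w)$, so strictly speaking your remainder is $O_{z_0,\ve,T,\phi_1(w)}(\tau_k^2)$ rather than the $w$-independent $O_{z_0,\ve,T}(\sqrt{\tau_k})$ asserted in the lemma. The paper's route, by passing through the diameter $D$, genuinely produces a remainder independent of $w$. This discrepancy is harmless for the application in \S\ref{ssc:TK-prf}, where $w=x_{\bm{\tau}}^{k-1}$ and $\phi_1(x_{\bm{\tau}}^{k-1})\le\phi(z_0)-\inf_X\phi_2$ is controlled in terms of $z_0$; but if you want the lemma exactly as stated you should either note this dependence explicitly or revert to the paper's diameter estimate for that one step.
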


\begin{proof}
The proof is based on calculations similar to Lemma~\ref{lm:L424}.
Applying Lemma~\ref{lm:key} to the steps $z_{\bm{\tau}}^{k-1} \to \hat{z}_{\bm{\tau}}^k$
and $\hat{z}_{\bm{\tau}}^k \to z_{\bm{\tau}}^k$, we have
\begin{align*}
(1+\lambda_1 \tau_k) d^2(\hat{z}_{\bm{\tau}}^k,w) -d^2(z_{\bm{\tau}}^{k-1},w)
&\le 2\tau_k \{\phi_1(w)-\phi_1(\hat{z}_{\bm{\tau}}^k)\}
 -K' \tau_k \{\phi_1(z_{\bm{\tau}}^{k-1}) -\phi_1(\hat{z}_{\bm{\tau}}^k)\}, \\
(1+\lambda_2 \tau_k) d^2(z_{\bm{\tau}}^k,w) -d^2(\hat{z}_{\bm{\tau}}^k,w)
&\le 2\tau_k \{\phi_2(w)-\phi_2(z_{\bm{\tau}}^k)\}
 -K' \tau_k \{\phi_2(\hat{z}_{\bm{\tau}}^k) -\phi_2(z_{\bm{\tau}}^k)\}.
\end{align*}
Thus we find
\begin{align*}
(1+\lambda_2 \tau_k) d^2(z_{\bm{\tau}}^k,w)
&\le d^2(z_{\bm{\tau}}^{k-1},w) -\lambda_1 \tau_k d^2(\hat{z}_{\bm{\tau}}^k,w)
 +2\tau_k \{\phi(w) -\phi_1(\hat{z}_{\bm{\tau}}^k) -\phi_2(z_{\bm{\tau}}^k)\} \\
&\quad -K' \tau_k \{\phi_1(z_{\bm{\tau}}^{k-1}) -\phi_1(\hat{z}_{\bm{\tau}}^k)
 +\phi_2(\hat{z}_{\bm{\tau}}^k) -\phi_2(z_{\bm{\tau}}^k)\}.
\end{align*}
Note that, by Lemma~\ref{lm:bound}(i),
\begin{align*}
|d^2(\hat{z}_{\bm{\tau}}^k,w) -d^2(z_{\bm{\tau}}^{k-1},w)|
&\le \{d(\hat{z}_{\bm{\tau}}^k,w)+d(z_{\bm{\tau}}^{k-1},w)\}
 d(\hat{z}_{\bm{\tau}}^k,z_{\bm{\tau}}^{k-1}) \\
&\le 2D\sqrt{2\tau_k \{\phi_1(z_{\bm{\tau}}^{k-1}) -\phi_1(\hat{z}_{\bm{\tau}}^k)\}} \\
&\le 2D\sqrt{2\tau_k} \sqrt{\phi_1(z_0) +\Delta_{\ve}^T(z_0) -\inf_X \phi_1} \\
&= O_{z_0,\ve,T}(\sqrt{\tau_k}).
\end{align*}
Moreover,
\[ \phi(w) -\phi_1(\hat{z}_{\bm{\tau}}^k) -\phi_2(z_{\bm{\tau}}^k)
 \le \phi(w) -\phi(z_{\bm{\tau}}^k) +\delta_{\bm{\tau}}^k(z_0) \]
and similarly
\[ \phi_1(z_{\bm{\tau}}^{k-1}) -\phi_1(\hat{z}_{\bm{\tau}}^k)
 +\phi_2(\hat{z}_{\bm{\tau}}^k) -\phi_2(z_{\bm{\tau}}^k)
 \le \phi(z_{\bm{\tau}}^{k-1}) -\phi(z_{\bm{\tau}}^k) +2\delta_{\bm{\tau}}^k(z_0). \]
Combining these yields
\begin{align*}
(1+\lambda_2 \tau_k) d^2(z_{\bm{\tau}}^k,w)
&\le \frac{1}{1+\lambda_1 \tau_k} d^2(z_{\bm{\tau}}^{k-1},w)
 +2\tau_k \{\phi(w) -\phi(z_{\bm{\tau}}^k) +\delta_{\bm{\tau}}^k(z_0)\} \\
&\quad -K' \tau_k \{\phi(z_{\bm{\tau}}^{k-1}) -\phi(z_{\bm{\tau}}^k)
 +2\delta_{\bm{\tau}}^k(z_0)\} +\tau_k \cdot O_{z_0,\ve,T}(\sqrt{\tau_k}).
\end{align*}
Multiply both sides by
$e^{\lambda_2^{\bm{\tau}} t_{\bm{\tau}}^{k-1}+\lambda_1^{\bm{\tau}} t_{\bm{\tau}}^k}
 =e^{(\lambda_1^{\bm{\tau}}+\lambda_2^{\bm{\tau}})t_{\bm{\tau}}^{k-1}
 +\lambda_1^{\bm{\tau}} \tau_k}
 =e^{(\lambda_1^{\bm{\tau}}+\lambda_2^{\bm{\tau}})t_{\bm{\tau}}^k
 -\lambda_2^{\bm{\tau}} \tau_k}$.
Then, recalling \eqref{eq:lamtau}, we obtain the desired estimate.
$\qedd$
\end{proof}

\subsection{Proof of Theorem~\ref{th:TK}}\label{ssc:TK-prf}

By Lemma~\ref{lm:TK},
\begin{align*}
&\frac{d^2(z_{\bm{\tau}}^k,w)-d^2(z_{\bm{\tau}}^{k-1},w)}{2\tau_k} \\
&= \frac{e^{(\lambda_1^{\bm{\tau}}+\lambda_2^{\bm{\tau}})t_{\bm{\tau}}^k}
 d^2(z_{\bm{\tau}}^k,w)
 -e^{(\lambda_1^{\bm{\tau}}+\lambda_2^{\bm{\tau}})t_{\bm{\tau}}^{k-1}}
 d^2(z_{\bm{\tau}}^{k-1},w)}
 {2e^{(\lambda_1^{\bm{\tau}}+\lambda_2^{\bm{\tau}})t_{\bm{\tau}}^k}\tau_k}
 +\frac{e^{-(\lambda_1^{\bm{\tau}}+\lambda_2^{\bm{\tau}})\tau_k}-1}{2\tau_k}
 d^2(z_{\bm{\tau}}^{k-1},w) \\
&\le e^{-\lambda_2^{\bm{\tau}}\tau_k}
 \{\phi(w)-\phi(z_{\bm{\tau}}^k)+\delta_{\bm{\tau}}^k(z_0)\}
 -\frac{K'}{2} e^{-\lambda_2^{\bm{\tau}}\tau_k}
 \{\phi(z_{\bm{\tau}}^{k-1})-\phi(z_{\bm{\tau}}^k)+2\delta_{\bm{\tau}}^k(z_0)\} \\
&\quad +\frac{e^{-(\lambda_1^{\bm{\tau}}+\lambda_2^{\bm{\tau}})\tau_k}-1}{2\tau_k}
 d^2(z_{\bm{\tau}}^{k-1},w) +O(\sqrt{|\bm{\tau}|}) \\
&= \phi(w)-\phi(z_{\bm{\tau}}^k) -\frac{K'}{2}\{\phi(z_{\bm{\tau}}^{k-1})-\phi(z_{\bm{\tau}}^k)\}
 -\frac{\lambda_1+\lambda_2}{2} d^2(z_{\bm{\tau}}^{k-1},w)
 +(1-K')\delta_{\bm{\tau}}^k(z_0) \\
&\quad +O(\sqrt{|\bm{\tau}|}).
\end{align*}
We used the bound of $\phi(z_{\bm{\tau}}^k)$ (Lemma~\ref{lm:bound}(i))
to estimate the error terms.
Denote by $\{x_{\bm{\tau}}^k\}_{k \ge 0}$
a discrete solution of the variational scheme \eqref{eq:dgf} for $\phi$ with $x_{\bm{\tau}}^0=z_0$.
We recall from the proof of Theorem~\ref{th:T414} that, putting $\lambda:=\lambda_1+\lambda_2$,
\[ \frac{d^2(x_{\bm{\tau}}^k,y) -d^2(x_{\bm{\tau}}^{k-1},y)}{2\tau_k}
 \le \phi(y)-\phi(x_{\bm{\tau}}^k) -\frac{K'}{2}\{\phi(x_{\bm{\tau}}^{k-1})-\phi(x_{\bm{\tau}}^k)\}
 -\frac{\lambda}{2}d^2(x_{\bm{\tau}}^k,y). \]
Applying these inequalities with $w=x_{\bm{\tau}}^{k-1}$ and $y=z_{\bm{\tau}}^k$ to
\[ d^2(x_{\bm{\tau}}^N,z_{\bm{\tau}}^N)
 = \sum_{k=1}^N \left\{ d^2(x_{\bm{\tau}}^k,z_{\bm{\tau}}^k) -d^2(x_{\bm{\tau}}^{k-1},z_{\bm{\tau}}^k)
 +d^2(x_{\bm{\tau}}^{k-1},z_{\bm{\tau}}^k) -d^2(x_{\bm{\tau}}^{k-1},z_{\bm{\tau}}^{k-1}) \right\}, \]
we obtain for $N$ with $t_{\bm{\tau}}^N \le T$
\begin{align*}
d^2(x_{\bm{\tau}}^N,z_{\bm{\tau}}^N)
&\le \sum_{k=1}^N 2\tau_k \left\{
\phi(z_{\bm{\tau}}^k)-\phi(x_{\bm{\tau}}^k) -\frac{K'}{2}\{\phi(x_{\bm{\tau}}^{k-1})-\phi(x_{\bm{\tau}}^k)\}
 -\frac{\lambda}{2}d^2(x_{\bm{\tau}}^k,z_{\bm{\tau}}^k) \right\} \\
&\quad +\sum_{k=1}^N 2\tau_k \left\{
 \phi(x_{\bm{\tau}}^{k-1})-\phi(z_{\bm{\tau}}^k)
 -\frac{K'}{2}\{\phi(z_{\bm{\tau}}^{k-1})-\phi(z_{\bm{\tau}}^k)\}
 -\frac{\lambda}{2} d^2(x_{\bm{\tau}}^{k-1},z_{\bm{\tau}}^{k-1}) \right\} \\
&\quad +\sum_{k=1}^N 2\tau_k (1-K')\delta_{\bm{\tau}}^k(z_0)
 +t_{\bm{\tau}}^N \cdot O(\sqrt{|\bm{\tau}|}) \\
&\le (2-K') \sum_{k=1}^N \tau_k \{\phi(x_{\bm{\tau}}^{k-1})-\phi(x_{\bm{\tau}}^k)\}
 -K'\sum_{k=1}^N \tau_k \{\phi(z_{\bm{\tau}}^{k-1})-\phi(z_{\bm{\tau}}^k)\} \\
&\quad -\lambda \sum_{k=1}^N \tau_k
 \{d^2(x_{\bm{\tau}}^{k-1},z_{\bm{\tau}}^{k-1})+d^2(x_{\bm{\tau}}^k,z_{\bm{\tau}}^k)\}
 +2(1-K')|\bm{\tau}|\Delta_{\ve}^T(z_0) \\
&\quad +t_{\bm{\tau}}^N \cdot O(\sqrt{|\bm{\tau}|}).
\end{align*}
Notice that
\[ \sum_{k=1}^N \tau_k \{\phi(x_{\bm{\tau}}^{k-1})-\phi(x_{\bm{\tau}}^k)\}
 \le \sum_{k=1}^N |\bm{\tau}| \{\phi(x_{\bm{\tau}}^{k-1})-\phi(x_{\bm{\tau}}^k)\}
 \le \left\{\phi(z_0)-\inf_X \phi \right\}|\bm{\tau}|. \]
Moreover, since $\phi(z_{\bm{\tau}}^{k-1})-\phi(z_{\bm{\tau}}^k) \ge -2\delta_{\bm{\tau}}^k(z_0)$,
\begin{align*}
\sum_{k=1}^N \tau_k \{\phi(z_{\bm{\tau}}^{k-1})-\phi(z_{\bm{\tau}}^k)\}
&\le \sum_{k=1}^N |\bm{\tau}| \{\phi(z_{\bm{\tau}}^{k-1})-\phi(z_{\bm{\tau}}^k)
 +2\delta_{\bm{\tau}}^k(z_0)\} \\
&\le \left\{\phi(z_0)-\inf_X \phi+2\Delta_{\ve}^T(z_0) \right\}|\bm{\tau}|.
\end{align*}
Therefore, letting $i \to \infty$ in the sequence $\mathscr{P}_{\bm{\tau}_i}$
for which $\bm{\bar{z}}_{\bm{\tau}_i}$ converges to $\zeta$, we find
\[ d^2\big( \xi(T),\zeta(T) \big) \le -2\lambda \int_0^T d^2(\xi,\zeta) \,dt. \]
This implies that the nonnegative function $f(T):=\int_0^T d^2(\xi,\zeta) \,dt$ satisfies
$f' \le -2\lambda f$ and hence $(e^{2\lambda T} f(T))' \le 0$.
Thus $f \equiv 0$ and we complete the proof of $\zeta=\xi$ and Theorem~\ref{th:TK}
(recall the paragraph following Lemma~\ref{lm:bound}).
$\qedd$

{\small

}


\begin{thebibliography}{AGS2}

\bibitem[AG]{AG}
L.~Ambrosio and N.~Gigli, A user's guide to optimal transport,
Modeling and optimisation of flows on networks, 1--155,
Lecture Notes in Math., {\bf 2062}, Springer, 2013.

\bibitem[AGS1]{AGSbook}
L.~Ambrosio, N.~Gigli and G.~Savar\'e,
Gradient flows in metric spaces and in the space of probability measures,
Birkh\"auser Verlag, Basel, 2005.

\bibitem[AGS2]{AGShf}
L.~Ambrosio, N.~Gigli and G.~Savar\'e,
Calculus and heat flow in metric measure spaces and applications to spaces with Ricci bounds from below,
Invent.\ Math.\ {\bf 195} (2014), 289--391.

\bibitem[AGS3]{AGSrcd}
L.~Ambrosio, N.~Gigli and G.~Savar\'e,
Metric measure spaces with Riemannian Ricci curvature bounded from below,
Duke Math.\ J.\ {\bf 163} (2014), 1405--1490.

\bibitem[ASZ]{ASZ}
L~Ambrosio, G.~Savar\'e and L.~Zambotti,
Existence and stability for Fokker-Planck equations with log-concave reference measure,
Probab.\ Theory Related Fields {\bf 145} (2009), 517--564.

\bibitem[Ba1]{Ba}
M.~Ba\v{c}\'ak, A new proof of the Lie--Trotter--Kato formula in Hadamard spaces,
Commun.\ Contemp.\ Math.\ {\bf 16} (2014), 1350044, 15 pp.

\bibitem[Ba2]{Ba-book}
M.~Ba\v{c}\'ak, Convex analysis and optimization in Hadamard spaces,
Walter de Gruyter \& Co., Berlin, 2014.

\bibitem[BCL]{BCL}
K.~Ball, E.~A.~Carlen and E.~H.~Lieb,
Sharp uniform convexity and smoothness inequalities for trace norms,
Invent.\ Math.\ {\bf 115} (1994), 463--482.

\bibitem[Br]{Br}
H.~Br\'ezis,
Op\'erateurs maximaux monotones et semi-groupes de contractions dans les espaces de Hilbert (French),
North-Holland Publishing Co., Amsterdam-London; American Elsevier Publishing Co., Inc., New York, 1973.

\bibitem[BBI]{BBI}
D.~Burago, Yu.~Burago and S.~Ivanov, A course in metric geometry,
American Mathematical Society, Providence, RI, 2001.

\bibitem[CM]{CM}
P.~Cl\'ement and J.~Maas, A Trotter product formula for gradient flows in metric spaces,
J.\ Evol.\ Equ.\ {\bf 11} (2011), 405--427.
Erratum in J.\ Evol.\ Equ.\ {\bf 13} (2013), 251--252.

\bibitem[CL]{CL}
M.~G.~Crandall and T.~M.~Liggett,
Generation of semi-groups of nonlinear transformations on general Banach spaces,
Amer.\ J.\ Math.\ {\bf 93} (1971), 265--298.

\bibitem[DG]{DG}
E.~De Giorgi, New problems on minimizing movements,
Boundary value problems for partial differential equations and applications, 81--98,
RMA Res.\ Notes Appl.\ Math., {\bf 29}, Masson, Paris, 1993.
(Also in: E.~De Giorgi, Selected papers, 699--713,
Edited by Luigi Ambrosio et al, Springer-Verlag, Berlin, 2006.)

\bibitem[EKS]{EKS}
M~Erbar, K.~Kuwada and K.-T.~Sturm,
On the equivalence of the entropic curvature-dimension condition and Bochner's inequality on metric measure spaces,
Invent.\ Math.\ {\bf 201} (2015), 993--1071.

\bibitem[Gi1]{Gi}
N.~Gigli,
On the heat flow on metric measure spaces: existence, uniqueness and stability,
Calc.\ Var.\ Partial Differential Equations {\bf 39} (2010), 101--120.

\bibitem[Gi2]{Gsplit'}
N.~Gigli, An overview of the proof of the splitting theorem in spaces with non-negative Ricci curvature,
Anal.\ Geom.\ Metr.\ Spaces {\bf 2} (2014), 169--213.

\bibitem[Gi3]{Gsplit}
N.~Gigli, The splitting theorem in non-smooth context,
Preprint (2013). Available at {\sf arXiv:1302.5555}

\bibitem[GKO]{GKO}
N.~Gigli, K.~Kuwada and S.~Ohta, Heat flow on Alexandrov spaces,
Comm.\ Pure Appl.\ Math.\ {\bf 66} (2013), 307--331.

\bibitem[JKO]{JKO}
R.~Jordan, D.~Kinderlehrer and F.~Otto,
The variational formulation of the Fokker--Planck equation,
SIAM J.\ Math.\ Anal.\ {\bf 29} (1998), 1--17.

\bibitem[Jo]{Jo}
J.~Jost, Convex functionals and generalized harmonic maps into spaces of nonpositive curvature,
Comment.\ Math.\ Helv.\ {\bf 70} (1995), 659--673.

\bibitem[KM]{KM}
T.~Kato and K.~Masuda,
Trotter's product formula for nonlinear semigroups generated by the subdifferentials of convex functionals,
J.\ Math.\ Soc.\ Japan {\bf 30} (1978), 169--178.

\bibitem[Ly]{Ly}
A.~Lytchak, Open map theorem for metric spaces,
St.\ Petersburg Math.\ J.\ {\bf 17} (2006), 477--491.

\bibitem[Ma]{Ma}
U.~F.~Mayer, Gradient flows on nonpositively curved metric spaces and harmonic maps,
Comm.\ Anal.\ Geom.\ {\bf 6} (1998), 199--253.

\bibitem[Oh1]{Oconv}
S.~Ohta, Convexities of metric spaces, Geom.\ Dedicata {\bf 125} (2007), 225--250.

\bibitem[Oh2]{Ogra}
S.~Ohta, Gradient flows on Wasserstein spaces over compact Alexandrov spaces,
Amer.\ J.\ Math.\ {\bf 131} (2009), 475--516.

\bibitem[Oh3]{Ouni}
S.~Ohta, Uniform convexity and smoothness, and their applications in Finsler geometry,
Math.\ Ann.\ {\bf 343} (2009), 669--699.

\bibitem[OP]{OP}
S.~Ohta and M.~P\'alfia,
Discrete-time gradient flows and law of large numbers in Alexandrov spaces,
Calc.\ Var.\ Partial Differential Equations {\bf 54} (2015), 1591--1610.

\bibitem[OS]{OSnc}
S.~Ohta and K.-T.~Sturm, Non-contraction of heat flow on Minkowski spaces,
Arch.\ Ration.\ Mech.\ Anal.\ {\bf 204} (2012), 917--944.

\bibitem[Ot]{Ot}
F.~Otto, The geometry of dissipative evolution equations: the porous medium equation,
Comm.\ Partial Differential Equations {\bf 26} (2001), 101--174.

\bibitem[PP]{PP}
G.~Perel'man and A.~Petrunin, Quasigeodesics and gradient curves in Alexandrov spaces,
Unpublished preprint. Available at {\sf http://www.math.psu.edu/petrunin/}

\bibitem[Sa]{Sa}
G.~Savar\'e,
Gradient flows and diffusion semigroups in metric spaces under lower curvature bounds,
C.\ R.\ Math.\ Acad.\ Sci.\ Paris {\bf 345} (2007), 151--154.

\bibitem[Sto]{St}
I.~Stojkovic,
Approximation for convex functionals on non-positively curved spaces and the Trotter--Kato product formula,
Adv.\ Calc.\ Var.\ {\bf 5} (2012), 77--126.

\bibitem[Stu]{Stu}
K.-T.~Sturm, Gradient flows for semiconvex functions on metric measure spaces -- existence, uniqueness and Lipschitz continuity,
Preprint (2014). Available at {\sf arXiv:1410.3966}

\bibitem[Vi1]{Vi1}
C.~Villani, Topics in optimal transportation, American Mathematical Society, Providence, RI, 2003.

\bibitem[Vi2]{Vi}
C.~Villani, Optimal transport, old and new, Springer-Verlag, Berlin, 2009.
\end{thebibliography}
\end{document}